%

\documentclass[aop,MSNbibl,seceqn,dvips]{arximspdf}

%

\doi{10.1214/12-AOP813} 
\volume{41}
\issue{5}
\pubyear{2013}
\firstpage{3420}
\lastpage{3461}

\makeatletter

\newcommand{\rrvert}{\vert}
\newcommand{\llvert}{\vert}
\newtheorem{theorem}{Theorem}[section]
\newtheorem{proposition}{Proposition}[section]
\newtheorem{corollary}{Corollary}[section]
\newtheorem{lemma}{Lemma}[section]
\newtheorem{problem}{Problem}[section]
\newproclaim{notation}{Notational convention}[section]
\newtheorem{claim}{Claim}[section]
\newproclaim{remark}{Remark}[section]



\newcommand{\thmref}[1]{Theorem~\ref{thm:#1}} 
\newcommand{\lemref}[1]{Lemma~\ref{lem:#1}} 
\newcommand{\propref}[1]{Proposition~\ref{prop:#1}} 
\newcommand{\claimref}[1]{Claim~\ref{claim:#1}} 

\newcommand{\secref}[1]{Section~\ref{sec:#1}} 
\newcommand{\eqnref}[1]{(\ref{eq:#1})} 



\newcommand\ignore[1]{}



%


\def\R{\mathbf{R}} 
\def\Z{\mathbf{Z}} 
\def\N{\mathbf{N}} 




















\def\sF{\mathcal{F}}
\def\sG{\mathcal{G}}
\def\sO{\mathcal{O}}
\def\sP{\mathcal{P}}




\def\eps{\varepsilon}





\def\Expdist{\mathsf{Exp}}

\def\bE{\mathbf{V}}
\def\bbD{\mathbf{D}}

\def\Ctime{\mathsf{C}}
\def\bV{\mathbf{V}}
\makeatother

\begin{document}
\begin{frontmatter}

\title{Mean field conditions for coalescing random walks\thanksref{T1}}
\runtitle{Mean field coalescing r. w.}
\thankstext{T1}{Supported by a \textit{Universal} grant and a \textit
{Bolsa de Produtividade em Pesquisa} from CNPq, Brazil.}

\begin{aug}
\author{\fnms{Roberto Imbuzeiro} \snm{Oliveira}\corref{}\ead[label=e1]{rimfo@impa.br}}
\runauthor{R.~I. Oliveira}
\affiliation{IMPA}
\address{IMPA\\
Estrada Dona Castorina, 110\\
Rio de Janeiro, RJ\\
Brazil\\
\printead{e1}}
\end{aug}

\received{\smonth{9} \syear{2011}}
\revised{\smonth{6} \syear{2012}}

%
\begin{abstract}
The main results in this paper are about the \emph{full coalescence
time} $\Ctime$ of a system of coalescing random walks over a finite
graph $G$. Letting $\mathsf{m}(G)$ denote the mean meeting time of two such
walkers, we give sufficient conditions under which $\mathbf{E}
[\Ctime  ]\approx
2\mathsf{m}(G)$ and $\Ctime/\mathsf{m}(G)$ has approximately the
same law as in the
``mean field'' setting of a large complete graph. One of our theorems is
that mean field behavior occurs over all vertex-transitive graphs whose
mixing times are much smaller than $\mathsf{m}(G)$; this nearly solves
an open
problem of Aldous and Fill and also generalizes results of Cox for
discrete tori in $d\geq2$ dimensions. Other results apply to
nonreversible walks and also generalize previous theorems of Durrett
and Cooper et al. Slight extensions of these results apply to voter
model consensus times, which are related to coalescing random walks via duality.

Our main proof ideas are a strengthening of the usual approximation of
hitting times by exponential random variables, which give results for
nonstationary initial states; and a new general set of conditions
under which we can prove that the hitting time of a union of sets
behaves like a minimum of independent exponentials. In particular, this
will show that the first meeting time among $k$ random walkers has mean
$\approx\mathsf{m}(G)/ \bigl({k\atop2} \bigr)$.
\end{abstract}

%
\begin{keyword}[class=AMS]
\kwd[Primary ]{60K35}
\kwd[; secondary ]{60J27}
\end{keyword}

\begin{keyword}
\kwd{Coalescing random walks}
\kwd{voter model}
\kwd{hitting times}
\kwd{exponential approximation}
\end{keyword}

\end{frontmatter}

\section{Introduction}\label{sec1}

Start a continuous-time random walk from each vertex of a finite,
connected graph $G$. The walkers evolve independently, except that when
two walkers \emph{meet}---that is, lie on the same vertex at the same
time---they coalesce into one. One may easily show that there will
almost surely be a finite time at which only one walk will remain in
this system. The first such time is called the \emph{full coalescence
time} for $G$ and is denoted by $\Ctime$.

The main goal of this paper is to show that one can estimate the law of
$\Ctime$ for a large family of graphs $G$, and that this law only
depends on $G$ through a single rescaling parameter. More precisely, we
will prove results of the following form: if the \emph{mixing time}
$t_{\mathrm{mix}}^G$ of $G$ (defined in \secref{prelim}) is ``small,''
then there
exists a parameter $\mathsf{m}(G)>0$ such that the law $\Ctime
/\mathsf{m}(G)$
takes a
universal shape. Slight extensions of these results will be used to
study the so-called \emph{voter model consensus time} on $G$.

The universal shape of $\Ctime/\mathsf{m}(G)$ comes from a \emph{mean field}
computation over a large complete graph $K_n$. In this case the
distribution of $\Ctime$ can be computed exactly (cf.~\cite{AldousFill_RWBook}, Chapter 14),
\[
\frac{\Ctime}{(n-1)/2}=_d\sum_{i=2}^n
\mathsf{Z}_{i},
\]
where:
%
%
\begin{equation}
\label{eq:defZi}\mbox{The $\mathsf{Z}_{i}$'s are
independent and }\forall i\geq2, t\geq0 \qquad \mathbf{P} (\mathsf{Z}_{i}
\geq t )=e^{-t({i\atop2})}.
\end{equation}
In words, $\Ctime$ is a rescaled sum of independent exponential random
variables with means $1/\bigl({i\atop2}\bigr)$, $2\leq i\leq n$.

The scaling factor $(n-1)/2$ is the expected meeting time of two
independent random walks over $K_n$,
and we see that
\[
\frac{\Ctime}{(n-1)/2}\to_w \sum_{i\geq2}
\mathsf{Z}_{i}\quad\mbox{and}\quad\frac{\mathbf{E} [\Ctime
]}{(n-1)/2}\to2\qquad\mbox{when $n$ grows.}
\]

This suggests the general problem we address in this paper:

%
\begin{problem}Given a graph $G$, let $\mathsf{m}(G)$ denote the expected
meeting time of two independent random walks over $G$, both started
from stationarity. Give sufficient conditons on $G$ under which $\Ctime
$ has mean-field behavior, that is,
%
%
\begin{equation}
\label{eq:meanfieldJJJ}\operatorname{Law} \bigl(\Ctime/\mathsf {m}(G) \bigr)
\approx\operatorname{Law} \biggl(\sum_{i\geq2}\mathsf
{Z}_{i} \biggr)
\end{equation}
and
%
%
\begin{equation}
\label{eq:aldousfill}\mathbf{E} [\Ctime ]\approx\mathsf {m}(G)\mathbf{E} \biggl[
\sum_{i\geq2}\mathsf{Z}_{i} \biggr] = 2
\mathsf{m}(G).
\end{equation}
\end{problem}

A version of this problem was posed in Aldous and Fill's 1994 draft
\cite{AldousFill_RWBook}, Chapter~14, and much more recently by Aldous
\cite{AldousTalk}. However, as far as we know there are only two
families of examples where the problem has been fully solved. Discrete
tori $G=(\Z/m\Z)^d$ with with $d\geq2$ fixed and $m\gg1$ were
considered in Cox's 1989 paper~\cite{Cox_Coalescing}. More recently,
Cooper, Frieze and Radzik~\cite{CooperEtAl_IPSOnExpanders} proved mean
field behavior in large random $d$-regular graphs ($d$~bounded).
Partial results were also obtained by Durrett \cite
{Durrett_RGDynamics,Durrett_PNAS} for certain models of large networks.

We note that mean-field behavior is not universal over all large
graphs. One counterexample comes from a sequence of growing cycles,
where the limiting law of $\Ctime$ was also computed by Cox \cite
{Cox_Coalescing}. Stars with $n$ vertices are also not mean field:
$\Ctime$ is lower bounded by the time the last edge of the star is
crossed by some walker, which is about $\log n$, whereas $\mathsf
{m}(G)$ is
uniformly bounded.

\subsection{Results for transitive, reversible chains}

Our results in this paper address~\eqnref{meanfieldJJJ} and \eqnref
{aldousfill} simultaneously by proving approximation bounds in $L_1$
Wasserstein distance, which implies closeness of first moments; cf.
\secref{prelim_W}.

The first theorem implies that mean field behavior occurs whenever $G$
is vertex-transitive, and its \emph{mixing time} (defined in \secref
{prelim}) is much smaller than $\mathsf{m}(G)$. This nearly solves a problem
posed by Aldous and Fill in~\cite{AldousFill_RWBook}, Chapter 14. In
their open Problem 12, they ask for an analogous result with the
relaxation time replacing the mixing time (more on this below).

The natural setting for this first theorem is that of walkers evolving
according to the same reversible, transitive Markov chain (the
definition of $\Ctime$ easily generalizes to this case), where \emph{transitive}
means that for any two states $x$ and $y$ one can find a
permutation of the state space mapping $x$ to $y$ and leaving the
transition rates invariant. Clearly, the standard continuous-time
random walk on a vertex-transitive graph is transitive in this sense.

%
\begin{notation}\label{not:bigoh}In this paper we will use ``$b=O
(a )$'' in the following sense: there exist universal constants
$C,\xi>0$
such that $|a|\leq\xi\Rightarrow|b|\leq C |a|$.
\end{notation}

%
\begin{theorem}[(Mean field for transitive, reversible chains)]\label{thm:transitive}
Let $Q$ be the (generator of a) transitive, reversible,
irreducible Markov chain over a finite state space $\bV$, with mixing
time $t_{\mathrm{mix}}^Q$. Define $\mathsf{m}(Q)$ to be the expected
meeting time of two
independent continuous-time random walks over $\bV$ that evolve
according to~$Q$, when both are started from stationarity. Denote by
$\Ctime$ the full coalescence time for walks evolving according to $Q$.
Finally, define $\{\mathsf{Z}_{i}\}_{i=2}^{+\infty}$ as in \eqnref
{defZi}. Then
\[
d_W \biggl(\operatorname{Law} \biggl(\frac{\Ctime}{\mathsf
{m}(Q)} \biggr),
\operatorname{Law} \biggl(\sum_{i\geq2}\mathsf
{Z}_{i} \biggr) \biggr)= O \biggl( \biggl[\rho(Q)\ln \biggl(
\frac
{1}{\rho(Q)} \biggr) \biggr]^{1/6} \biggr),
\]
where
\[
\rho(Q)\equiv\frac{t_{\mathrm{mix}}^Q}{\mathsf{m}(Q)},
\]
and $d_W$ denotes $L_1$ Wasserstein distance. In particular,
\[
\mathbf{E} [\Ctime ] = \biggl\{2 + O \biggl( \biggl[\rho (Q)\ln \biggl(
\frac {1}{\rho(Q) } \biggr) \biggr]^{1/6} \biggr) \biggr\} \mathsf{m}(Q).
\]
\end{theorem}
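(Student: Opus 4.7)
The plan is to decompose $\Ctime = \sum_{k=2}^n T_k$, where $T_k$ denotes the duration of the epoch during which exactly $k$ walkers remain (so $T_n$ is the time to the first collision, $T_{n-1}$ the additional time to the second, and so on). The target is to show that $T_k/\mean$ is approximately distributed as $\Zee{k}$, that successive $T_k$'s are approximately independent, and that the total $L_1$ Wasserstein error accumulates to the claimed $[\ratio\ln(1/\ratio)]^{1/6}$ bound.

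The central technical input, announced in the abstract, is a strengthened exponential approximation for the first hitting time of a union of sets. I view the surviving walkers as a single Markov chain on $\bV^k$ driven by $Q^{\oplus k}$; the event ``some pair meets'' corresponds to hitting $M_k := \bigcup_{i<j}\{x \in \bV^k : x_i = x_j\}$, a union of $\binom{k}{2}$ sets of equal stationary mass by transitivity. From the promised approximation I expect two conclusions: (i) the stationary-start hitting time of $M_k$ has mean $\approx \mean/\binom{k}{2}$, since each pair-set $M_{ij}$ is hit in mean time $\mean$ from $\pi^{\otimes k}$ and the hypothesis $\Tmix^Q \ll \mean$ makes the ``effective overlap'' among the $M_{ij}$'s negligible; and (ii) this hitting time is approximately $\Expdist(\binom{k}{2}/\mean)$ even when started from distributions that are only ``$\Tmix^Q$-mixed'' rather than truly stationary.

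Conclusion (ii) is crucial because the configuration entering epoch $T_{k-1}$ is produced by a coalescence event, not by fresh sampling from $\pi^{\otimes(k-1)}$. Using the strong Markov property together with a delay of order $\Tmix^Q$ after each collision, and the fact that by transitivity the meeting location is close to uniform on $\bV$, one can justify the required mixedness at the start of each epoch. The main obstacle is the early regime where $k$ is close to $n$: here the mean epoch length $\mean/\binom{k}{2}$ can be much smaller than $\Tmix^Q$, so the between-epoch mixing argument collapses. My plan is to handle this by a coarser, more robust bound: for a cutoff $k_0$, control $\sum_{k > k_0} T_k$ in expectation by $\sum_{k > k_0} \mean/\binom{k}{2} = \bigoh{\mean/k_0}$, which matches the tail contribution $\sum_{k > k_0} \Zee{k}$ up to an $O(1/k_0)$ term. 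For $k \leq k_0$ the per-epoch exponential approximation applies cleanly, incurring at most $k_0$ copies of the single-epoch Wasserstein error (roughly a power of $\ratio\log(1/\ratio)$). Optimizing the cutoff against these two error sources, together with the $L^2$-to-$L^1$ conversion implicit in Wasserstein bounds, yields the claimed $[\ratio\ln(1/\ratio)]^{1/6}$ rate; the bound on $\Ex{\Ctime}/\mean$ then follows because $L_1$ Wasserstein closeness transfers to first moments and $\Ex{\sum_{i\ge 2}\Zee{i}} = 2$.
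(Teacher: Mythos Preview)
Your outline is structurally close to the paper's proof: both decompose into epochs $T_k$, invoke the hitting-time-of-a-union exponential approximation to get $T_k/\mean\approx\Zee{k}$ for moderate $k$, introduce a cutoff $k_0$, and optimize $k_0$ against the tail. The paper packages the ``moderate $k$'' part as a single inductive Wasserstein estimate (its Lemma~5.1), and makes your ``delay of order $\Tmix^Q$ after a collision'' precise via a stochastic-domination sandwich: $\Ctime_-\preceq_d\Ctime\preceq_d\Ctime_+$, where $\Ctime_-$ starts $k_0$ walkers from i.i.d.\ $\pi$, and $\Ctime_+$ is obtained by \emph{suppressing killings} for a window of length $\Tmix^Q(\err^2)$ after reaching $k_0$ survivors, so that the surviving particles are genuinely close to $\pi^{\otimes k_0}$ when the clock restarts. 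This is the rigorous version of your re-mixing idea and is worth adopting, since inserting a delay alters the process and you need an inequality, not an identity, to compare.

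The one substantive gap is your control of the big-bang phase. You write that you will ``control $\sum_{k>k_0}T_k$ in expectation by $\sum_{k>k_0}\mean/\binom{k}{2}=O(\mean/k_0)$,'' but this is exactly the statement that needs proof, and it does \emph{not} follow from anything established so far: for $k$ large enough that $\mean/\binom{k}{2}\lesssim\Tmix^Q$, the exponential approximation fails, and there is no a~priori reason each epoch mean should still be $O(\mean/\binom{k}{2})$. The paper imports this bound from an external result (Theorem~1.2 of \cite{Oliveira_TAMS}), which gives $\Ex{\Ctime_{k_0}}\le C\,\mean/k_0 + C\,\Tmix^Q$ for transitive reversible chains; this is precisely what distinguishes the transitive theorem from the general one (where only a cruder union-bound argument is available, costing extra $\ln|\bV|$ factors). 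You should either cite this or supply an argument. A minor arithmetic point: the accumulated Wasserstein error over epochs $k\le k_0$ is $O(k_0^2\,\err)$, not ``$k_0$ copies'' of a fixed error, because the single-epoch approximation in Lemma~4.2 already carries a $k^2$ factor; balancing $k_0^2\err$ against $1/k_0$ is what yields $k_0\sim\err^{-1/3}$ and the final $\err^{1/3}=O\bigl([\ratio\ln(1/\ratio)]^{1/6}\bigr)$.
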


This result generalizes Cox's theorem~\cite{Cox_Coalescing} for $(\Z
/m\Z
)^d$ with $d\geq2$ and growing~$m$. In this case, for any fixed $d$,
the mixing time grows as $m^2$ whereas $\mathsf{m}(G)\approx m^2\ln m$ for
$d=2$ and $\mathsf{m}(G)\approx m^d$ for larger $d$. The original problem
posed by Aldous and Fill remains open, but we note that:
\begin{itemize}
\item For transitive, reversible chains, the mixing time is at most a
$C \ln|\bV|$ factor away from the relaxation time, with $C>0$
universal (this is true whenever the stationary distribution is
uniform). This means we are not too far off from a full solution;
\item Any counterexample to their problem would have to come from a
vertex-transitive graph with mixing time of the order of $\mathsf
{m}(G)$ \emph{and} relaxation time asymptotically smaller than the mixing time. To
the best of our knowledge, such an object is not known to exist.
\end{itemize}

\subsection{Results for other chains}

We also have results on coalescing random walks evolving according to
arbitrary generators $Q$ on finite state spaces $\bV$. Again, we only
require that the mixing time $t_{\mathrm{mix}}^Q$ of $Q$ be
sufficiently small
relative to other parameters of the chain.

%
\begin{theorem}[(Mean field for general Markov chains)]\label
{thm:general}Let $Q$ denote (the generator of) a mixing Markov chain
over a finite set $\bV$, with unique stationary distribution $\pi$.
Denote by $q_{\max}$ the maximum transition rate from any $x\in\bV$
and by $\pi_{\max}$ the maximum stationary probability of an element of~$\bV$.
Let $\mathsf{m}(Q)$ denote the expected meeting time of two
random walks
evolving according to $Q$, both started from $\pi$. Finally, let
$\Ctime
$ denote the full coalescence time of random walks evolving in $\bV$
according to $Q$. Then
\[
d_W \biggl(\operatorname{Law} \biggl(\frac{\Ctime}{\mathsf
{m}(Q)} \biggr),
\operatorname{Law} \biggl(\sum_{i\geq2}\mathsf
{Z}_{i} \biggr) \biggr)= O \biggl( \biggl(\alpha(Q) \ln \biggl(
\frac
{1}{\alpha(Q)} \biggr)\ln^4|\bV| \biggr)^{1/6} \biggr),
\]
where
\[
\alpha(Q) = \bigl(1+q_{\max} t_{\mathrm{mix}}^Q \bigr)
\pi_{\max},
\]
and $d_W$ again denotes $L^1$ Wasserstein distance. In particular,
\[
\mathbf{E} [\Ctime ] = \biggl\{2 + O \biggl( \biggl[\alpha (Q)\ln \biggl(
\frac {1}{\alpha (Q)} \biggr) \ln^4|\bV| \biggr]^{1/6} \biggr)
\biggr\} \mathsf{m}(Q).
\]
\end{theorem}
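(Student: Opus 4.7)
The plan is to decompose $\Ctime$ into the successive inter-coalescence intervals and to couple each one with a rescaled independent exponential. Write $\Ctime=\sum_{k=2}^{|\bV|}\tau_k$, where $\tau_k$ is the amount of time during which exactly $k$ walkers are alive, and aim to show that, jointly over $k$, the variables $\tau_k/\mean$ are close in distribution to independent $\Zee{k}$'s. The target Wasserstein bound will then follow from the triangle inequality, using that $\sum_{i\geq 2}\Zee{i}$ has rapidly-summing tails so that contributions from large $k$ are negligible.

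The first main step is an exponential approximation for the time $\tau_k$. Consider $k$ walkers evolving according to $Q$ as a single walker in the product chain on $\bV^k$; the next meeting is the hitting time of the union of the $\binom{k}{2}$ diagonals $\{x_i=x_j\}$. Using the paper's announced general criterion that a hitting time of a union of sets behaves like the minimum of independent exponentials, applied in the product space, I would show: if the $k$ surviving walkers are distributed close to $\pi^{\otimes k}$, then $\tau_k$ is approximately exponential with mean $\mean/\binom{k}{2}$. The factor $\pi_{\max}$ enters because it bounds the probability that a pair of stationary walkers starts on the same vertex (a potential obstruction to exponentiality), while $q_{\max}\,\Tmix^Q$ bounds the transient error before the product chain mixes, so $\alpha(Q)=(1+q_{\max}\Tmix^Q)\pi_{\max}$ naturally measures the quality of the approximation per interval.

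The second step is to chain these approximations across intervals. After a coalescence takes the system from $k{+}1$ to $k$ walkers, two of the surviving positions coincide, so the configuration is not stationary; however, provided $\tau_k \gg \Tmix^Q$, by the time the next meeting is likely to occur the $k$ walkers will have re-mixed to near $\pi^{\otimes k}$ and the exponential approximation from the first step applies. For the fast phase where $k$ is so large that $\mean/\binom{k}{2}\lesssim \Tmix^Q$, the total elapsed time $\sum_{k>k_0}\tau_k$ and its ideal counterpart $\sum_{k>k_0}\Zee{k}$ are both small: one truncates at a threshold $k_0$ chosen so that this tail is at most the ambient error, and bounds the pre-threshold contribution directly in expectation using that the process cannot coalesce faster than the rate produced by the exponential approximation run in reverse.

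The main obstacle, and the source of both the $\log^4|\bV|$ factor and the exponent $1/6$, is the absence of transitivity: different pairs of walkers can have wildly different meeting rates, so unlike in Theorem~\ref{thm:transitive} one cannot simply invoke a single rate $1/\mean$ for every pair. The product-chain hitting time argument must therefore be applied uniformly over the many starting configurations that arise as coalescences occur, and tail/union bounds over the $|\bV|-1$ intervals, over the pairs of coordinates, and over events ensuring good mixing after each coalescence, each contribute a logarithmic factor. Balancing the mixing error $(\Tmix^Q/\tau_k)$ against the ``premature coalescence'' error $(q_{\max}\Tmix^Q\pi_{\max})$ and the threshold truncation error is an optimization problem in a single parameter whose optimum produces the $1/6$ power of $\alpha(Q)\log(1/\alpha(Q))\log^4|\bV|$.
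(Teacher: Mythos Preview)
Your overall architecture---decompose $\Ctime$ into successive inter-coalescence times, show each is approximately exponential via the ``hitting a union of sets'' result in the product chain, chain these together, and truncate at some $k_0$---matches the paper's proof closely. The exponential approximation step (your first step) and the chaining (your second step) correspond to \lemref{firstmeetingtimemultiple} and \lemref{inductiveargument}, and your identification of $\alpha(Q)$ as a per-interval error budget is correct in spirit.

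There is, however, a genuine gap in your treatment of the ``fast phase'' $\Ctime_{k_0}=\sum_{k>k_0}\tau_k$. You claim this is small because ``the process cannot coalesce faster than the rate produced by the exponential approximation run in reverse,'' but that kind of reasoning yields a \emph{lower} bound on each $\tau_k$, not the upper bound on $\Exp{}{\Ctime_{k_0}}$ you need. For $k>k_0$ the exponential approximation in \lemref{firstmeetingtimemultiple} requires $k^2\err$ small, which fails precisely in this regime, so you cannot simply sum $\mean/\binom{k}{2}$ either. The paper handles this with a different idea: $\{\Ctime_{k_0}\geq t\}$ implies that \emph{some} $k_0$-tuple of initial vertices has no meeting up to time $t$, so a union bound over all $\leq |\bV|^{k_0}$ such tuples together with the upper tail from \lemref{firstmeetingtimemultiple} gives $\Exp{}{\Ctime_{k_0}}/\mean = \bigoh{\ln|\bV|/k_0}$. (This replaces the appeal to \cite{Oliveira_TAMS} used in the transitive case.) That single $\ln|\bV|$ is the only logarithm in the estimate; the final exponent $1/6$ and the $\ln^4|\bV|$ come purely from optimizing $k^2\err + \ln|\bV|/k$ in $k$ and then rewriting $\err^{1/3}(\ln|\bV|)^{2/3}$ as $(\err^2\ln^4|\bV|)^{1/6}$ with $\err^2=\bigoh{\alpha(Q)\ln(1/\alpha(Q))}$. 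Your diagnosis that ``four separate log factors accumulate from tail/union bounds over intervals, pairs, and mixing events'' is incorrect and would lead you to look for the constants in the wrong places.
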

We note that this theorem does \emph{not} imply \thmref{transitive}: for
instance, it does \emph{not} work for two-dimensional discrete tori.
However, the well-known formula for $\pi$ over graphs gives the
following corollary:
%
%
\begin{corollary}[(Proof omitted)]\label{cor:network}Assume $G$ is a
connected graph with vertex set $\bV$, where each vertex $x\in\bV$ has
degree $\operatorname{ deg}_G(x)$. Assume that $\eps\in(|\bV|^{-1},1)$ is such that
\[
\biggl(\frac{\max_{x\in\bV}\operatorname{ deg}_G(x)}{|\bV|^{-1} \sum_{x\in
\bV
}\operatorname{ deg}_G(x)} \biggr) t_{\mathrm{mix}}^G\leq
\frac{\eps|\bV
|}{\ln^4|\bV
|\ln\ln
|\bV|}.
\]
Then
\[
d_W \biggl(\operatorname{Law} \biggl(\frac{\Ctime}{\mathsf
{m}(G)} \biggr),
\operatorname{Law} \biggl(\sum_{i\geq2}\mathsf
{Z}_{i} \biggr) \biggr) = O \biggl( \biggl[\eps \biggl(1 +
\frac{\ln
(1/\eps)}{\ln\ln|\bV |} \biggr) \biggr]^{1/6} \biggr).
\]
\end{corollary}
This corollary suffices to prove mean field behavior over a variety of
examples, such as:
\begin{itemize}
\item\emph{all graphs with bounded ratio of maximal to average degree
and mixing time at most of the order $|\bV|/\ln^5|\bV|$}: this includes
expanders~\cite{CooperEtAl_IPSOnExpanders} and supercritical
percolation clusters in $(\Z/m\Z)^d$ with $d\geq3$ fixed \cite
{BenjaminiMossel_RWPercolation,Pete_ConnectivityMixingPercolation};
\item\emph{all graphs with maximal degree $\leq|\bV|^{1-\eta}$
($\eta
>0$ fixed) and mixing time that is polylogarithmic in $|\bV|$}: this
includes the giant component of a typical Erd\"{o}s--R\'{e}nyi graph
$G_{n,d/n}$ with $d>1$~\cite{FountoulakisReed_RWGiantComponent} and the
models of large networks considered by Durrett \cite
{Durrett_RGDynamics,Durrett_PNAS}.
\end{itemize}

Let us briefly comment on the case of large networks. Durrett has
estimated $\mathsf{m}(G)$ in these models, and has proven results
similar to
ours for a bounded number of walkers. We do not attempt to compute
$\mathsf{m}(G)$ here, which in general is a model-specific parameter. However,
we do show that mean field behavior for $\Ctime$ follows from
``generic'' assumptions about networks that hold for many different
models. This is important because recent measurements of real-life
social networks~\cite{LeskovecEtAl_Community} suggest that known models
of large networks are very inaccurate with respect to most network
characteristics outside of degree distributions and conductance. In
fairness, coalescing random walks and voter models over large networks
are not particularly realistic either, but at the very least we know
that mean field behavior is not an artifact of a particular class of
models. We also observe that our \thmref{general} also works for
nonreversible chains, for example, random walks on directed graphs.

\subsection{Results for the voter model}\label{sec:votermodel}

The voter model is a very well-known process in the interacting
particle systems literature~\cite{Liggett_IPSBook}. The configuration
space for the voter model is the power set $\sO^{\bV}$ of functions
$\eta:\bV\to\sO$, where $\bV$ is some nonempty set, and $\sO$ is a
nonempty set of possible opinions. The evolution of the process is
determined by numbers $q(x,y)$ ($x,y\in V, x\neq y$) and is informally
described as follows: at rate $q(x,y)$, node $x$ copies $y$'s opinion.
That is, there is a transition at rate $q(x,y)$ from any state $\eta
\dvtx \bV
\to\sO$ to the corresponding state $\eta^{x\leftarrow y}$, where
\[
\eta^{x\leftarrow y}(z) = \cases{ %
\eta(y), & \quad $\mbox{if }z=x;$
\vspace*{2pt}\cr
\eta(z), &\quad $\mbox{for all other }z\in V\setminus\{x\}.$}
\]
A classical duality result relates this voter model to a system of
coalescing random walks with transition rates $q(\cdot,\cdot\cdot)$ and
corresponding generator $Q$. More precisely, suppose that $\bV=\{
x(1),\ldots,x(n)\}$ and that $(\overline{X}_t(i))_{t\geq0,1\leq i\leq
n}$ is a system of coalescing random walks evolving according to $Q$
with $X_0(i)=x(i)$
for each $1\leq i\leq n$.

%
\begin{proposition}[(Duality~\cite{AldousFill_RWBook})] Choose $\eta
_0\in
\sO^{\bV}$. Then the configuration
\[
\hat{\eta}_t\dvtx x(i)\in\bV\mapsto\eta_0 \bigl(
\overline{X}_t(i) \bigr)\in\sO\qquad (1\leq i\leq n)
\]
has the same distribution as the state $\eta_t$ of the voter model at
time $t$, when the initial state is $\eta_0$. In particular, the \emph{consensus time} for the voter model
\[
\tau\equiv\inf \bigl\{t\geq0 \dvtx \forall i,j\in\bV, \eta_t(i)=\eta
_t(j) \bigr\}
\]
satisfies $\mathbf{E} [\tau ]\leq\mathbf{E} [\Ctime
]<+\infty$.
\end{proposition}

Now assume that the initial state $\eta_0\in\sO^{\bV}$ is random and
that the random variables $\{\eta_0(x)\}_{x\in V}$ are i.i.d. and have
common law $\mu$ which is not a point mass. In this case one can show
via duality that the law of the consensus time $\tau$ is that of
$\Ctime
_{K\wedge n}$, where $K$ is a $\N$-valued random variable independent
of the coalescing random walks, defined by
\[
K = \min\{i\in\N\dvtx U_{i+1}\neq U_1\} \qquad\mbox{where
}U_1,U_2,U_3,\ldots,\mbox{ are i.i.d. draws
from $\mu$},
\]
and for each $1\leq k\leq n$,
\[
\Ctime_k\equiv\min \bigl\{t\geq0 \dvtx \bigl| \bigl\{\overline{X}_t(i)
\dvtx 1\leq i\leq n \bigr\}\bigr|=k \bigr\}.
\]
Thus the key step in analyzing the voter model via our techniques is to
prove approximations for the distribution of $\Ctime_k$. Theorems \ref
{thm:transitive} and~\ref{thm:general} imply mean-field behavior for
$\Ctime=\Ctime_1$. A quick inspection of the proofs reveals that the
same bounds for Wasserstein distance can be obtained for $\Ctime_k$ for
any $1\leq k\leq n$. It follows that:
%
%
\begin{theorem}[(Proof omitted)]Let $\bV,\sO$ and $\mu$ be as above, and
consider the voter model defined by $\bV$, $\sO$ and by the generator
$Q$ corresponding to transition rates $q(x,y)$.\vadjust{\goodbreak}
Assume that the sequence $\{\mathsf{Z}_{i}\}_{i\geq2}$ is defined as in
\eqnref{defZi}, and also that~$K$ has the law described above and is
independent from the $\mathsf{Z}_{i}$. Define $\rho(Q)$ and $\alpha
(Q)$ as in
Theorems~\ref{thm:transitive} and~\ref{thm:general}. Then the consensus
time $\tau$ for this voter model satisfies
\[
d_W \biggl(\operatorname{Law} \biggl(\frac{\tau}{\mathsf
{m}(Q)} \biggr),
\operatorname{Law} \biggl(\sum_{i>K}Z_i
\biggr) \biggr)= O \bigl( \bigl(\rho(Q)\ln\bigl(1/ \rho(Q)\bigr)
\bigr)^{1/6} \bigr)
\]
if $Q$ is reversible and transitive, and
\[
d_W \biggl(\operatorname{Law} \biggl(\frac{\tau}{\mathsf
{m}(Q)} \biggr),
\operatorname{Law} \biggl(\sum_{i>K}Z_i
\biggr) \biggr)=O \bigl( \bigl(\alpha(Q) \ln \bigl(1/\alpha(Q) \bigr) \ln
^4|\bV| \bigr)^{1/6} \bigr),
\]
otherwise.
\end{theorem}

%
%
%

\subsection{Main proof ideas}

Our proofs of Theorems~\ref{thm:transitive} and~\ref{thm:general} both
start from the formula \eqnref{defZi} for the terms in the distribution
of $\Ctime$ over $K_n$. Crucially, each term~$\mathsf{Z}_{i}$ has a specific
meaning: $\mathsf{Z}_{i}$ is the time it takes for a system with $i$ particles
to evolve to a system with $i-1$ particles, rescaled by the expected
meeting time of two walkers. For $i=2$, this is just the (rescaled)
meeting time of a pair of particles, which is an exponential random
variable with mean $1$. For $i>2$, we are looking at the first meeting
time among $\bigl({i\atop2}\bigr)$ pairs of particles. It turns out that these
pairwise meeting times are independent; since the minimum of $k$
independent exponential random variables with mean $\mu$ is an
exponential r.v. with mean $\mu/k$, we deduce that $\mathsf{Z}_{i}$ is
exponential with mean $1/\bigl({i\atop2}\bigr)$.

The bulk of our proof consists of proving something similar for more
general chains $Q$. Fix some such $Q$, with state space $\bV$, and let
$\Ctime_i$ denote the time it takes for a system of coalescing random
walks evolving according to $Q$ to have~$i$ uncoalesced particles.
Clearly, $M\equiv\Ctime_1-\Ctime_2$ is the meeting time of a pair of
particles, which is the hitting time of the diagonal set
\[
\Delta\equiv \bigl\{(x,x) \dvtx x\in\bV \bigr\}
\]
by the Markov chain $Q^{(2)}$ given by a pair of independent
realizations of $Q$. More generally, $M^{(i+1)}=\Ctime_{i}-\Ctime
_{i+1}$ is the hitting time of
\[
\Delta^{(i+1)}= \bigl\{ \bigl(x(1),\ldots,x(i+1) \bigr) \dvtx \exists 1\leq
i_1<i_2\leq i+1, x(i_1)=x(i_2)
\bigr\}.
\]
The mean-field picture suggests that each $M^{(i+1)}$ should be close
in distribution to~$\mathsf{Z}_{i}$. Indeed, it is known that:

\textit{General principle}: Let $H_A$ be the hitting time of a subset $A$
of states. If the mixing time $t_{\mathrm{mix}}^Q$ is small relative
to $\mathbf{E} [H_A ]$, then $H_A$ is approximately
exponentially distributed.

This is a general meta-result for small subsets of the state space of a
Markov chain; precise versions (with different quantitative bounds) are
proven in~\cite{AldousBrown_RareEvents,Aldous_ExpHittingTimes} when the
chain starts from the stationary distribution. However, we face a few
difficulties when trying to use these off-the-shelf results:
\begin{longlist}[(1)]
\item[(1)] For each $i$, $M^{(i+1)}$ is the first hitting time of
$\Delta^{(i+1)}$ after time $\Ctime_{i+1}$. The random walkers are
\emph{not} stationary at this random time, so we need to ``do'' exponential
approximation from nonstationary starting points.
\item[(2)] In order to get Wasserstein approximations, we need better
control of the tail of $M^{(i+1)}$.
\item[(3)] To prove that $\mathsf{Z}_{i}$ and $M^{(i+1)}/\mathsf
{m}(Q)$ are close, we
must show something like that $\mathbf{E} [M^{(i+1)}
]\approx\mathbf{E} [M ]/
\bigl({i+1\atop2}\bigr)$, that is, that $M^{(i+1)}$ behaves like the minimum of
$\bigl({i+1 \atop2}\bigr)$ independent exponentials.
\item[(4)] Finally, we should not expect the exponential approximation
to hold when $\Delta^{(i+1)}$ is too large. That means that the ``big
bang'' phase (to use Durrett's phrase) at the beginning of the process
has to be controlled by other means.
\end{longlist}

It turns out that we can deal with points 1 and 2 via a different kind
exponential approximation result, stated as \thmref{rareeventsI}. This
result will give bounds of the following form:
%
%
\begin{equation}
\label{eq:definformalExpdist}\mathbf{P}_{x} (H_A>t ) =
\bigl(1+o (1 )\bigr)\exp \biggl(-\frac{t}{(1+o (1 ))\mathbf{E} [H_A ]} \biggr)
\end{equation}
as long as
\[
t_{\mathrm{mix}}^Q=o \bigl(\mathbf{E} [H_A ] \bigr)
\quad\mbox {and}\quad\mathbf{P}_{x} \bigl(H_A\leq
t_{\mathrm{mix}}^Q \bigr)=o (1 ).
\]
Notice that this holds even for \emph{nonstationary} starting points $x$
if the chain started from $x$ is unlikely to hit $A$ before the mixing
time. This is discussed in \secref{nearlyExp} below. We also take some
time in that section to develop a specific notion of ``near exponential
random variable.'' Although this takes up some space, we believe it
provides a useful framework for tackling other problems. We note that a
version of \thmref{rareeventsI} for stationary initial states result is
implicit in~\cite{Aldous_ExpHittingTimes}.

We now turn to point $3$. The key difficulty in our setting is that,
unlike Cox~\cite{Cox_Coalescing} or Cooper et al. \cite
{CooperEtAl_IPSOnExpanders}, we do not have a good ``local'' description
of the graphs under consideration which we could use to compute
$\mathbf{E} [M^{(i+1)} ]$ directly. We use instead a simple
general idea, which we
believe to be new, to address this point. Clearly, $M^{(i+1)}$ is a
minimum of $\bigl({i+1 \atop2}\bigr)$ hitting times. Let us consider the \emph{general} problem of understanding the law of
\[
H_B = \min_{1\leq i\leq\ell}H_{B_i}\qquad\mbox{where }B= \bigcup_{i=1}^\ell B_i,
\]
under the assumption that $\mathbf{E} [H_{B_i} ]=\mu$ does
not depend on $i$
when the initial distribution is stationary (this covers the case of
$M^{(i+1)}$). Assume also that \eqnref{definformalExpdist} holds for
all $A\in\{B,B_1,B_2,\ldots,B_\ell\}$. Then the following holds for
$\eps$ in a suitable range:
\[
\forall A\in\{B,B_1,B_2,\ldots,B_\ell\} \qquad \mathbf{P} \bigl(H_A\leq\eps\mathbf{E} [H_A ] \bigr)
\approx\eps.
\]
Morally speaking, this means that $\eps\mathbf{E} [H_A ]$
is the $\eps$-quantile
of $H_A$ for all $A$ as above; this is implicit in \cite
{Aldous_ExpHittingTimes} and is made explicit in our own \thmref
{rareeventsI}. Now apply this to $A=B$, with $\eps$ replaced by $\eps
\mu
/\mathbf{E} [H_B ]$, and obtain
\[
\frac{\eps\mu}{\mathbf{E} [H_B ]}\approx\mathbf {P} (H_B\leq\eps\mu ) = \mathbf{P}
\Biggl(\bigcup_{i=1}^\ell
\{H_{B_i}\leq\eps\mu \} \Biggr).
\]
If we can show that the pairwise correlations between the events $\{
H_{B_i}\leq\eps\mu\}$ are sufficiently small, then we may obtain
\[
\frac{\eps\mu}{\mathbf{E} [H_B ]}\approx\mathbf{P} \Biggl(\bigcup
_{i=1}^\ell \{ H_{B_i}\leq\eps\mu\} \Biggr)
\approx\sum_{i=1}^\ell \mathbf{P}
(H_{B_i}\leq\eps\mu ) = \ell\eps.
\]
This gives
\[
\mathbf{E} [H_B ]\approx\frac{\mu}{\ell}
\]
as if the times $H_{B_1},\ldots,H_{B_\ell}$ were independent
exponentials. The reasoning presented here is made rigorous and
quantitative in \thmref{rareeventsII} below.

Finally, we need to take care of point 4, that is, the ``big bang''
phase. In the setting of \thmref{general}, we simply use our results on
the coalescence times for smaller number of particles, which seems
wasteful but is enough to prove our results. For the
reversible/transitive case, we use a bound from~\cite{Oliveira_TAMS}
which is of the optimal order. Incidentally, the differences in the
bounds of the two theorems come from this better bound for the big bang
phase and from a more precise control of the correlations between
meeting times of different pairs of walkers.

\subsection{Outline}

The remainder of the paper is organized as follows. \secref{prelim}
contains several preliminaries. \secref{nearlyExp} contains a general
discussion of random variables with nearly exponential distribution and
our general approximation results for hitting times. In \secref
{meeting} we apply these results to the first meeting time among $k$
particles, after proving some technical estimates. \secref{CRW}
contains the formal definition of the coalescing random walks process
and proves mean field behavior for a moderate initial number of
walkers. Finally, \secref{proofsmain} contains the proofs of Theorems
\ref{thm:transitive} and~\ref{thm:general}. Related results and open
problems are discussed in the final sections.

\section{Preliminaries}\label{sec:prelim}

\subsection{Basic notation}\label{sec:notation}

We write $\N$ for nonnegative integers and $[k]=\{1,2,\ldots, k\}$ for
any $k\in\N\setminus\{0\}$. Given a set $S$, we let $|S|$ denote its
cardinality. Moreover, for $k\in\N$, we let
\[
\pmatrix{S\cr k}\equiv\bigl\{A\subset S \dvtx |A|=k\bigr\}.
\]
Notice that with this notation,
\[
|S|\mbox{ finite}\Rightarrow \biggl\llvert\pmatrix{S\cr k} \biggr\rrvert=
\pmatrix{|S|\cr
k} \equiv\frac{|S|!}{k! (|S|-k)!}.
\]

We will often speak of universal constants $C>0$. These are numbers
that do not depend on any of the parameters or mathematical objects
under consideration in a given problem. We will also use the notation
``$a=O (b )$'' in the universal sense prescribed in Notational
convention~\ref{not:bigoh}. In this way we can write down expressions
such as
\[
e^{b} = 1+ b + O \bigl(b^2 \bigr)\quad\mbox{and}\quad\ln \biggl(
\frac{1}{1-b} \biggr) = b+O \bigl(b^2 \bigr) = O (b ).
\]

Given a finite set $S$, we let $M_1(S)$ denote the set of all
probability measures over~$S$. Given $p,q\in M_1(S)$, their total
variation disance is defined as follows:
\[
d_\mathrm{ TV}(p,q)\equiv\frac{1}{2}\sum_{s\in S}\bigl|p(s)-q(s)\bigr|
= \sup_{A\subset S} \bigl[p(A) - q(A) \bigr],
\]
where $p(A) = \sum_{a\in A}p(a)$. For $S$ not finite, $M_1(S)$ will
denote the set of all probability measures over the ``natural'' $\sigma
$-field over $S$. For instance, for $S=\R$ we consider the Borel
$\sigma
$-field, and for $S=\bbD([0,+\infty),\bE)$ (see \secref{trajectories}
for a definition) we use the $\sigma$-field generated by projections.

If $X$ is a random variable taking values over $S$, we let
$\operatorname{Law} (X )\in
M_1(S)$ denote the distribution (or law) of $X$. Here we again assume
that there is a ``natural'' $\sigma$-field to work with.

\subsection{Wasserstein distance}\label{sec:prelim_W}

The \textit{$L_1$ Wasserstein distance} is a metric over probability
measures over $\R$ with finite first moments, given by
\[
d_W(\lambda_1,\lambda_2)=\int
_{\R} \bigl|\lambda_1(x,+\infty] - \lambda
_2(x,+\infty]\bigr| \,dx \qquad\bigl(\lambda_1,\lambda_2
\in M_1(\R) \bigr).
\]
A classical duality result gives
\[
d_W(\lambda_1,\lambda_2)= \sup
_{f\dvtx \R\to\R\mathrm{
1\mbox{-}Lipschitz}} \biggl(\int_{\R}f(x)
\lambda_1(dx)-\int_{\R}f(x)\lambda
_2(dx) \biggr).
\]

%
\begin{notation}Whenever we compute Wasserstein distances,
we will \emph{assume} that the distributions involved have first moments. This can be
checked in each particular case.
\end{notation}

%
\begin{remark}\label{rem:wasserstein}If $Z_1,Z_2$ are random
variables, we sometimes write
\[
d_W(Z_1,Z_2)\mbox{ instead of
}d_W\bigl(\operatorname{Law} (Z_1 ),\operatorname{Law}
(Z_2 )\bigr).
\]
Note that
\[
d_W(Z_1,Z_2) =\int_{\R}\bigl|
\mathbf{P} (Z_1\geq t ) - \mathbf{P} (Z_2\geq t )\bigr| \,dt.
\]
Also notice that
\[
\bigl|\mathbf{E} [Z_1 ]-\mathbf{E} [Z_2 ]\bigr|\leq
d_W(Z_1,Z_2).
\]
This is an equality if $Z_1\geq0$ a.s. and $Z_2=C Z_1$ for some
constant $C>0$,
%
%
\begin{equation}
\label{eq:wassersteinmultconstant}\forall C\in\R\qquad d_W(Z_1,C
Z_1) = |C-1|\mathbf{E} [Z_1 ],
\end{equation}
since $|f(C Z_1) - f(Z_1)|\leq|C-1| Z_1$ for every $1$-Lipschitz
function $f\dvtx \R\to\R$.
\end{remark}

We note here three useful lemmas on Wasserstein distance. These are
probably standard, but we could not find references for them, so we
provide proofs for the latter two lemmas in \secref{proof_W} of the
\hyperref[sec:proof_W]{Appendix}. The first lemma is immediate.

%
%
\begin{lemma}[(Sum lemma for Wasserstein distance; Proof omitted)]\label
{lem:sumW}For any two random variables $X,Y$ with finite first moments
and defined on the same probability space,
\[
d_W(X,X+Y)\leq\mathbf{E}\bigl [|Y| \bigr].
\]
\end{lemma}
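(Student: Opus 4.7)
My plan is to use the Kantorovich--Rubinstein duality formula already quoted in Section~\ref{sec:prelim_W}, which expresses the Wasserstein distance as a supremum over $1$-Lipschitz test functions. This converts the task of controlling $d_W(X,X+Y)$ into the task of controlling $|\Ex{f(X)}-\Ex{f(X+Y)}|$ uniformly in $f$, which can be handled by a pointwise Lipschitz bound followed by taking expectations.

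Concretely, I would first note that since $X$ and $X+Y$ are both assumed to have finite first moments, both expectations $\Ex{f(X)}$ and $\Ex{f(X+Y)}$ are well-defined and finite for every $1$-Lipschitz $f:\R\to\R$ (any such $f$ grows at most linearly). Next, for any fixed $1$-Lipschitz $f$, the Lipschitz property applied pointwise on the underlying probability space gives
\[
|f(X(\omega)) - f(X(\omega)+Y(\omega))| \;\leq\; |Y(\omega)|\quad\text{for every }\omega.
\]
Taking expectations and using the triangle inequality,
\[
|\Ex{f(X)} - \Ex{f(X+Y)}| \;\leq\; \Ex{|f(X) - f(X+Y)|} \;\leq\; \Ex{|Y|}.
\]
Finally, taking the supremum over all $1$-Lipschitz $f$ and invoking the duality formula yields $d_W(X,X+Y) \leq \Ex{|Y|}$, which is the claim.

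There is essentially no obstacle here: the whole argument rests on the fact that $X$ and $X+Y$ are defined on the \emph{same} probability space, so the trivial coupling $(X,X+Y)$ is admissible in the primal formulation of the Wasserstein distance, and its transport cost is exactly $\Ex{|(X+Y)-X|}=\Ex{|Y|}$. The dual formulation just makes this transparent without requiring us to appeal to the primal-dual equivalence explicitly.
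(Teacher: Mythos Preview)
Your proof is correct. The paper omits the proof of this lemma, calling it ``immediate'', and your argument via the Kantorovich--Rubinstein duality (equivalently, via the trivial coupling $(X,X+Y)$) is exactly the one-line justification the authors have in mind.
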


For the next lemma, recall that, given two real-valued random variables
$X,Y$, we say that $X$ is stochastically dominated by $Y$ and write
$X\preceq_d Y$ if $\mathbf{P} (X>t )\leq\mathbf{P}
(Y>t )$ for all $t\in\R$.

%
\begin{lemma}[(Sandwich lemma for Wasserstein distance)]\label
{lem:sandwich}Let $Z$, $Z_-$, $Z_+$ and $W$ be real-valued random
variables with finite first moments and $Z_-\preceq_d Z\preceq_dZ_+$. Then
\[
d_W(Z,W)\leq d_W(Z_-,W) + d_W(Z_+,W).
\]
\end{lemma}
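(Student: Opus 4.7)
My plan is to exploit the integral representation of Wasserstein distance recalled in Remark~\ref{rem:wasserstein},
\[
d_W(X,Y)=\int_{\R}\bigl|\Pr{X>t}-\Pr{Y>t}\bigr|\,dt,
\]
and reduce the lemma to a pointwise numerical inequality at each $t\in\R$. Write
\[
a(t)=\Pr{Z_->t},\quad b(t)=\Pr{Z>t},\quad c(t)=\Pr{Z_+>t},\quad w(t)=\Pr{W>t}.
\]
The stochastic dominations $Z_-\preceq_d Z\preceq_d Z_+$ translate exactly into the inequalities $a(t)\le b(t)\le c(t)$ for every $t\in\R$. So the lemma will follow if, at each fixed $t$, the three numbers $a\le b\le c$ and any $w\in[0,1]$ satisfy
\[
|b-w|\le |a-w|+|c-w|.
\]

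The key observation is that, since $a\le b\le c$, subtracting $w$ gives $a-w\le b-w\le c-w$, so $b-w$ lies in the closed interval with endpoints $a-w$ and $c-w$. On such an interval the absolute value function attains its maximum at one of the endpoints, hence
\[
|b-w|\le \max\bigl(|a-w|,|c-w|\bigr)\le |a-w|+|c-w|.
\]
There are no cases to split: this one-line observation handles every possible relative order of $a,b,c,w$.

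Integrating this pointwise inequality over $t\in\R$ yields
\[
\int_{\R}|b(t)-w(t)|\,dt\le \int_{\R}|a(t)-w(t)|\,dt + \int_{\R}|c(t)-w(t)|\,dt,
\]
which is exactly $d_W(Z,W)\le d_W(Z_-,W)+d_W(Z_+,W)$. Finiteness of the right-hand side (and hence integrability on the left) is guaranteed by the hypothesis that $Z_-,Z_+,W$ have finite first moments, so that both $d_W(Z_-,W)$ and $d_W(Z_+,W)$ are finite; moreover $Z$, being sandwiched between $Z_-$ and $Z_+$ in the stochastic order, automatically has a finite first moment as well (for instance, $\Ex{|Z|}\le \Ex{Z_+\vee 0}+\Ex{(-Z_-)\vee 0}<\infty$), so the left-hand side is well defined. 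I do not foresee any real obstacle here: the only mild subtlety is checking that the sandwich hypothesis forces integrability of $|b(t)-w(t)|$, which follows from the pointwise bound itself.
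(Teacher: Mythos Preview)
Your proof is correct and essentially identical to the paper's: both establish the pointwise inequality $|\Pr{Z>t}-\Pr{W>t}|\le|\Pr{Z_->t}-\Pr{W>t}|+|\Pr{Z_+>t}-\Pr{W>t}|$ (the paper invokes ``convexity'' where you spell out the max-at-an-endpoint argument) and then integrate. Your added remarks on integrability are a nice touch that the paper omits.
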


%
\begin{lemma}[(Conditional lemma for Wasserstein distance)]\label
{lem:conditionalwasserstein}Let $W_1$, $W_2$, $Z_1$, $Z_2$ be
real-valued random variables with finite first moments. Assume that
$Z_1$ and $Z_2$ independent and that $W_1$ is $\sG$-measurable for some
sub-$\sigma$-field $\sG$. Then
\begin{eqnarray*}
&& d_W\bigl(\operatorname{Law} (W_1+W_2 ),
\operatorname{Law} (Z_1+Z_2 )\bigr)
\\
&&\quad \leq d_W\bigl(\operatorname{Law} (W_1 ),
\operatorname{Law} (Z_1 )\bigr) + \mathbf{E} \bigl[d_W
\bigl(\operatorname{Law} (W_2\mid\sG ),\operatorname{Law}
(Z_2 )\bigr) \bigr].
\end{eqnarray*}
\end{lemma}
%
%
\begin{remark} Here we are implicitly assuming that $\operatorname
{Law} (W_2\mid \sG  )$ is given by some regular
conditional probability distribution.
\end{remark}

\subsection{Continuous-time Markov chains}

\subsubsection{State space and trajectories}\label{sec:trajectories}
Let $\bV$ be some nonempty finite set, called the \emph{state space}.
We write $\bbD\equiv\bbD([0,+\infty),\bV)$ for the set of all paths
\[
\omega\dvtx t\geq0\mapsto\omega_t\in\bV
\]
for which there exist $0=t_0<t_1<t_2<\cdots<t_n<\cdots$ with
$t_n\nearrow
+\infty$ and $\omega$ constant over each interval $[t_n,t_{n+1})$
$(n\in
\N)$. Such paths will sometimes be called \emph{c\`{a}dl\`{a}g}.

For each $t\geq0$, we let $X_t\dvtx \bbD\to\bV$ be the projection map
sending $\omega$ to $\omega_t$. We also define $X=(X_t)_{t\geq0}$ as
the identity map over $\bbD$. Whenever we speak about probability
measures and events over $\bbD$, we will implicitly use the $\sigma
$-field $\sigma(\bbD)$ generated by the maps $X_t$, $t\geq0$. We
define an associated filtration as follows:
\[
\sF_t\equiv\sigma\{X_s \dvtx 0\leq s\leq t\}\qquad (t\geq0).
\]

We also define the time-shift operators
\[
\Theta_T\dvtx \omega(\cdot)\in\bbD\mapsto\omega(\cdot+T)\in\bbD\qquad(T
\geq0).
\]

\subsubsection{Markov chains and their generators}
Let $q(x,y)$ be nonnegative real numbers for each pair $(x,y)\in\bV
^2$ with $x\neq y$. Define a linear operator $Q\dvtx \R^{\bV}\to\R^{\bV}$,
which maps $f\in\R^{\bV}$ to $Qf\in\R^{\bV}$ satisfying
\[
(Qf) (x)\equiv\sum_{y\in\bV\setminus\{x\}}q(x,y) \bigl(f(x)-f(y)
\bigr)\qquad (x\in\bV).
\]

It is a well-known result that there exists a unique family of
probability measures $\{\mathbf{P}_{x}\}_{x\in\bV}$ with the properties
listed below:
\begin{longlist}[(1)]
\item[(1)] for all $x\in\bV$, $\mathbf{P}_{x} (X_0=x )=1$;
\item[(2)] for all distinct $x,y\in\bV$, $\lim_{\eps\searrow
0}\frac
{\mathbf{P}_{x} (X_\eps=y )}{\eps} = q(x,y)$;
\item[(3)] \textit{Markov property}: for any $x\in\bV$ and $T\geq
0$, the
conditional law of $X\circ\Theta_T$ given $\sF_T$ under measure
$\mathbf{P}_{x}$ is given by $\mathbf{P}_{X_T}$.
\end{longlist}
The family $\{\mathbf{P}_{x}\}_{x\in\bV}$ satisfying these
properties is the
\emph{Markov chain with generator~$Q$}. We will often abuse notation and
omit any distinction between a Markov chain and its generator in our notation.

For $\lambda\in M_1(\bV)$, $\mathbf{P}_{\lambda}$ denotes the mixture
\[
\mathbf{P}_{\lambda}\equiv\sum_{x\in\bV}\lambda(x)
\mathbf{P}_{x}.
\]
This corresponds to starting the process from a random state
distributed according to $\lambda$.
For $x\in\bV$ or $\lambda\in M_1(\bV)$ and $Y\dvtx \bbD\to S$ a random
variable, we let $\operatorname{Law}_{x} (Y )$ or
$\operatorname{Law}_{\lambda} (Y )$ denote the law of
$Y$ under $\mathbf{P}_{x}$ or $\mathbf{P}_{\lambda}$ (resp.).

\subsubsection{Stationary measures and mixing}

Any Markov chain $Q$ as above has at least one stationary measure $\pi
\in M_1(\bV)$; this is a measure such that for any $T\geq0$,
\[
\operatorname{Law}_{\pi} (X\circ\Theta_T ) =
\operatorname{Law}_{\pi} (X ).
\]
We will be only interested in \emph{mixing Markov chains}, which are
those $Q$ with a unique stationary measure that satisfy the following condition:
\[
\forall\alpha\in(0,1), \exists T\geq0,\forall x\in\bV\qquad  d_\mathrm{
TV}\bigl(
\operatorname{Law} (x ) {X_T},\pi\bigr)\leq\alpha.
\]
The smallest such $T$ is called the $\alpha$-mixing time of $Q$ and is
denoted by $t_{\mathrm{mix}}^Q(\alpha)$. By the Markov property and
the definition
of total-variation distance, we also have that for all $\alpha\in
(0,1)$, all $t\geq t_{\mathrm{mix}}^Q(\alpha)$, all $x\in\bV$ and
all events $S$,
\[
\bigl|\mathbf{P}_{x} (X\circ\Theta_t\in S ) -
\mathbf{P}_{\pi
} (X\in S )\bigr|\leq\alpha.
\]
The specific value $t_{\mathrm{mix}}^Q\equiv t_{\mathrm{mix}}^Q(1/4)$
is called \emph{the}
mixing time of $Q$. We note that for all $\eps\in(0,1/2)$,
%
%
\begin{equation}
\label{eq:mixingfromstandard}t_{\mathrm{mix}}^Q(\eps)\leq C \ln (1/
\eps) t_{\mathrm{mix}}^Q,
\end{equation}
where $C>0$ is universal; this is proven in~\cite{LevinPeresWilmer_MCBook}, Section
4.5, for discrete time chains, but the same
argument works here.

\subsubsection{Product chains}\label{sec:productchains}

Letting $Q$ be as above, we may consider the joint trajectory of $k$
independent realizations of $Q$,
\[
X^{(k)}_t = \bigl(X_t(1),
\ldots,X_t(k) \bigr) \qquad (t\geq0)
\]
where each $(X_t(i))_{t\geq0}$ has law $\mathbf{P}_{x(i)}$. It turns out
that this corresponds to a Markov chain $Q^{(k)}$ on $\bV^k$ with
transition probabilities
\begin{eqnarray*}
&& q^{(k)} \bigl(x^{(k)},y^{(k)} \bigr) \\
&&\qquad= \cases{
q \bigl(x(i),y(i) \bigr), &\quad $\mbox{if }x(i)\neq y(i)
\wedge\forall j\in[k]\setminus\{i\}, x(j)= y(j);$
\vspace*{2pt}\cr
0, & \quad$\mbox{otherwise}.$}
\end{eqnarray*}

%
\begin{remark} In what follows we will always denote elements of
$\bV
^k$ [resp., $M_1(\bV^{k})$] by symbols like $x^{(k)},y^{(k)}, \ldots$
(resp., $\lambda^{(k)},\rho^{(k)},\ldots$). We will then denote the
distribution of $Q^{(k)}$ started from $x^{(k)}$ or $\lambda^{(k)}$ by
$\mathbf{P}_{x^{(k)}}$ or $\mathbf{P}_{\lambda^{(k)}}$. This is a
slight abuse of
our convention for the $Q$ chain, but the initial state/distribution
will always make it clear that we are referring to the product chain.
\end{remark}

The following result on $Q^{(k)}$ will often be useful.
%
%
\begin{lemma}\label{lem:productchains}Assume $Q$ is mixing and has
(unique) stationary distribution $\pi$. Then $Q^{(k)}$ is also mixing,
and the product measure $\pi^{\otimes k}$ is its (unique) stationary
distribution. Moreover, the mixing times of $Q^{(k)}$ satisfy
\[
\forall\alpha\in(0,1/2)\qquad t_{\mathrm{mix}}^{Q^{(k)}}(\alpha)\leq
t_{\mathrm{mix}}^Q( \alpha/k)\leq C \ln(k/\alpha)
t_{\mathrm{mix}}^Q
\]
with $C>0$ universal.
\end{lemma}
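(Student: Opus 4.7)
The plan is to exploit the product structure of $Q^{(k)}$ in three steps: verify stationarity of $\pi^{\otimes k}$, bound the distance to stationarity by a union-bound-like tensorization inequality, and then invoke \eqnref{mixingfromstandard}.

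First, I would verify that $\pi^{\otimes k}$ is stationary for $Q^{(k)}$. Since $Q^{(k)}$ is, by construction, the generator of $k$ independent copies of the $Q$-chain, starting each copy from $\pi$ keeps each marginal equal to $\pi$ by stationarity, and the joint law remains the product $\pi^{\otimes k}$ by independence. Uniqueness of this stationary distribution will follow from the mixing estimate we obtain below (any stationary distribution would have to equal the $t \to \infty$ limit of $\Lawp{x^{(k)}}{X^{(k)}_t}$, which is $\pi^{\otimes k}$).

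Second, for the mixing bound, I would fix an arbitrary starting state $x^{(k)} = (x(1), \dots, x(k)) \in \bV^k$ and write the law of $X^{(k)}_t$ as a product of marginals $\bigotimes_{i=1}^k \Lawp{x(i)}{X_t}$, by independence of the coordinates. The key ingredient is the standard tensorization inequality for total variation distance of product measures:
$$d_{\rm TV}\!\left(\bigotimes_{i=1}^k \mu_i, \bigotimes_{i=1}^k \nu_i\right) \leq \sum_{i=1}^k d_{\rm TV}(\mu_i,\nu_i),$$
which I would briefly justify via the coupling definition of TV distance (couple each pair $(\mu_i,\nu_i)$ optimally and independently; by a union bound, the probability that the joint couplings disagree in any coordinate is at most the sum of the disagreement probabilities). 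Applying this with $\mu_i = \Lawp{x(i)}{X_t}$ and $\nu_i = \pi$, and taking $t = \Tmix^Q(\alpha/k)$, each summand is at most $\alpha/k$, so the left-hand side is at most $\alpha$. This holds uniformly in $x^{(k)}$, giving $\Tmix^{Q^{(k)}}(\alpha) \leq \Tmix^Q(\alpha/k)$.

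Third, to convert the $(\alpha/k)$-mixing time of $Q$ into a multiple of the standard mixing time $\Tmix^Q$, I would invoke \eqnref{mixingfromstandard}, which gives $\Tmix^Q(\alpha/k) \leq C \ln(k/\alpha)\,\Tmix^Q$ for $\alpha/k \in (0,1/2)$, with $C$ universal; this range is automatic since $\alpha \in (0,1/2)$ and $k \geq 1$. There is no real obstacle here: all three steps are essentially bookkeeping, and the only non-trivial input is the tensorization inequality, which is classical. The main point to be careful about is simply writing down the product-chain law cleanly and making sure the $\ln(k/\alpha)$ factor comes out of the correct invocation of \eqnref{mixingfromstandard}.
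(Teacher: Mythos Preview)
Your proposal is correct and follows essentially the same approach as the paper's own proof sketch: write the law of $X^{(k)}_t$ as a product, apply the tensorization inequality for total variation (the paper simply cites this as ``well-known" whereas you sketch the coupling justification), conclude $\Tmix^{Q^{(k)}}(\alpha)\leq \Tmix^Q(\alpha/k)$, and finish with \eqnref{mixingfromstandard}. Your added remarks on stationarity and uniqueness are a bit more explicit than the paper's sketch but do not change the argument.
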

\begin{pf*}{Proof sketch}Notice that the law of $X^{(k)}_T$ has a
product form
\[
\operatorname{Law}_{x^{(k)}} \bigl(X^{(k)}_T \bigr)
= \operatorname {Law}_{x(1)} (X_T )\otimes
\operatorname{Law}_{x(2)} (X_T )\otimes\cdots\otimes
\operatorname{Law}_{x(k)} (X_T ).
\]
It is well known (and not hard to show) that the total-variation
distance between product measures is at most the sum of the distances
of the factors. This gives
\[
d_\mathrm{ TV} \bigl(\operatorname{Law}_{x^{(k)}} \bigl(X^{(k)}_T
\bigr), \pi^{\otimes k} \bigr)\leq\sum_{i=1}^kd_\mathrm{ TV}
\bigl(\operatorname{Law}_{x(i)} (X_T ),\pi \bigr).
\]
The RHS is $\leq\alpha$ if each term in the sum is less than $\alpha
/k$. This is achieved when $T\geq t_{\mathrm{mix}}^Q(\alpha/k)$;
\eqnref
{mixingfromstandard} then finishes the proof.\
\end{pf*}

\section{Nearly exponential hitting times}\label{sec:nearlyExp}

\subsection{Basic definitions}
We first recall a standard definition: the \emph{exponential
distribution} with mean $m>0$, denoted by $\Expdist(m)$, is the unique
probabilty dstribution $\mu\in M_1(\R)$ such that, if $Z$ is a random
variable with law $\mu$,
\[
\mathbf{P} (Z\geq t ) = e^{-t/m} \qquad(t\geq0).
\]
We write $Z=_d\Expdist(m)$ when $Z$ is a random variable with
$\operatorname{Law} (Z )=\Expdist(m)$.

Similarly, given $m>0$ as above and parameters $\alpha>0,\beta\in
(0,1)$, we say that a measure $\mu\in M_1(\R)$ has distribution
$\Expdist(m,\alpha,\beta)$ if it is the law of a random variable
$\widetilde{Z}$ with $\widetilde Z\geq0$ almost surely, and for all $t>0$,
\[
(1-\alpha) e^{-{t}/{((1-\beta)m)}}\leq\mathbf{P} (\widetilde Z\geq t )\leq(1+
\alpha)e^{-{t}/{((1+\beta)m)}}.
\]
We will write $\mu=\Expdist(m,\alpha,\beta)$ or $\widetilde{Z}=_d
\Expdist(m,\alpha,\beta)$ as a shorthand for this. Notice that
$\Expdist
(m,\alpha,\beta)$ does not denote a single distribution, but rather a
family of distributions that obey the above property, but we will
mostly neglect this minor issue.

Random variables with law $\Expdist(m,\alpha,\beta)$ will naturally
appear in our study of hitting times of Markov chains. We compile here
some simple results about them. The first proposition is trivial and we
omit its proof.

%
\begin{proposition}[(Proof omitted)]\label{prop:changemean}If $\mu\in
M_1(\R)$ satisfies
\[
\mu=\Expdist(m,\alpha,\beta),
\]
and $m'>0,\gamma\in(0,1)$ are such that $\beta+\gamma+\beta\gamma<1$,
\[
(1-\gamma) m'\leq m\leq(1+\gamma) m',
\]
then
\[
\mu=\Expdist \bigl(m',\alpha,\beta+\gamma+\beta\gamma \bigr).
\]
\end{proposition}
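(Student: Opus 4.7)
The plan is to simply unwind the definition of $\Expdist(m,\alpha,\beta)$ and verify the two tail inequalities required for membership in the family $\Expdist(m',\alpha,\beta')$, where $\beta' := \beta+\gamma+\beta\gamma$. The whole statement will reduce to the identity $1+\beta' = (1+\beta)(1+\gamma)$ together with the inequality $1-\beta' \leq (1-\beta)(1-\gamma)$, the latter being equivalent to $\beta\gamma \geq 0$. The hypothesis $\beta+\gamma+\beta\gamma<1$ is exactly what guarantees $1-\beta'>0$, so that $(1-\beta')m'$ is a positive quantity and the lower bound is meaningful.

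For the upper tail, first I would use $m \leq (1+\gamma)m'$ together with $1+\beta>0$ to obtain $(1+\beta)m \leq (1+\beta)(1+\gamma)m' = (1+\beta')m'$, which gives $e^{-t/((1+\beta)m)} \leq e^{-t/((1+\beta')m')}$ for every $t\geq 0$. Combined with the assumed bound $\Pr{\widetilde{Z}\geq t} \leq (1+\alpha)e^{-t/((1+\beta)m)}$ coming from $\mu=\Expdist(m,\alpha,\beta)$, this yields the desired upper estimate $\Pr{\widetilde{Z}\geq t} \leq (1+\alpha)e^{-t/((1+\beta')m')}$.

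For the lower tail, I would analogously use $m \geq (1-\gamma)m'$ together with $1-\beta>0$ to get $(1-\beta)m \geq (1-\beta)(1-\gamma)m' = (1-\beta-\gamma+\beta\gamma)m' \geq (1-\beta')m'$, where the last inequality uses $\beta\gamma\geq 0$. Both sides being positive (by the assumption on $\beta'$), this implies $e^{-t/((1-\beta)m)} \geq e^{-t/((1-\beta')m')}$, and combining with $\Pr{\widetilde{Z}\geq t} \geq (1-\alpha)e^{-t/((1-\beta)m)}$ completes the argument.

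There is no real obstacle here: the proposition is purely algebraic, and the only thing one must be careful about is checking that $1-\beta'>0$ so that the lower-tail bound in $\Expdist(m',\alpha,\beta')$ actually makes sense; this is precisely the role of the hypothesis $\beta+\gamma+\beta\gamma<1$.
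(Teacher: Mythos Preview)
Your argument is correct and is precisely the straightforward verification the paper has in mind; indeed, the paper omits the proof entirely, calling the proposition trivial. The only natural approach is the one you take: use $(1+\beta)(1+\gamma)=1+\beta'$ for the upper tail and $(1-\beta)(1-\gamma)\geq 1-\beta'$ for the lower tail, with the hypothesis $\beta+\gamma+\beta\gamma<1$ ensuring $\beta'\in(0,1)$.
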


We now show that random variables $\Expdist(m,\alpha,\beta)$ are close
to the corresponding exponentials.

%
\begin{lemma}[{[Wasserstein distance error for $\Expdist(m,\alpha,\beta
)$]}]\label{lem:approxwasserstein}We have the following inequality for
all $\alpha>0$, $0<\beta<1$:
\[
d_W \bigl(\Expdist(m),\Expdist(m,\alpha,\beta) \bigr)\leq2(\alpha+
\beta) m.
\]
That is, if $\widetilde{Z}=_d \Expdist(m,\alpha,\beta)$, the
Wasserstein distance between $\operatorname{Law} (\widetilde
{Z} )$ and $\Expdist(m)$ is
at most $2\alpha m+2\beta m$.
\end{lemma}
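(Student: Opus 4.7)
The plan is to work directly from the integral representation of Wasserstein distance recorded in Remark~\ref{rem:wasserstein}, namely
\[
d_W(\widetilde{Z}, Z) \;=\; \int_{\R} \bigl|\Pr{\widetilde{Z}\geq t} - \Pr{Z\geq t}\bigr|\,dt,
\]
where $Z =_d \Expdist(m)$. Since both $Z$ and $\widetilde{Z}$ are non-negative almost surely, the integrand vanishes for $t<0$, so only the range $t\geq 0$ matters.

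The key observation is that on $t\geq 0$, the function $e^{-t/m}$ itself lies inside the sandwich that defines $\Expdist(m,\alpha,\beta)$. Indeed, since $\beta\in(0,1)$ and $\alpha>0$, the elementary inequalities
\[
(1-\alpha)\,e^{-t/((1-\beta)m)} \;\leq\; e^{-t/m} \;\leq\; (1+\alpha)\,e^{-t/((1+\beta)m)}
\]
hold pointwise: the right one because $(1+\beta)m\geq m$ slows the decay and $1+\alpha\geq 1$; the left one because $(1-\beta)m\leq m$ speeds it and $1-\alpha\leq 1$. By assumption $\Pr{\widetilde{Z}\geq t}$ also lies between the same two functions. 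Therefore
\[
\bigl|\Pr{\widetilde{Z}\geq t} - e^{-t/m}\bigr| \;\leq\; (1+\alpha)\,e^{-t/((1+\beta)m)} \;-\; (1-\alpha)\,e^{-t/((1-\beta)m)}
\]
for every $t\geq 0$, since both quantities lie in a common interval whose length is the right-hand side.

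Finally I would integrate both sides over $t\in[0,\infty)$. The two exponentials integrate to $(1+\beta)m$ and $(1-\beta)m$ respectively, giving
\[
d_W(\widetilde{Z}, Z) \;\leq\; (1+\alpha)(1+\beta)m - (1-\alpha)(1-\beta)m \;=\; 2(\alpha+\beta)\,m,
\]
which is exactly the claimed bound. There is no real obstacle here — the entire argument is a sandwich estimate followed by an elementary integration — but the one point worth being careful about is verifying that $e^{-t/m}$ itself sits between the two bounding exponentials, which is what legitimizes replacing $|\Pr{\widetilde{Z}\geq t} - e^{-t/m}|$ by the (signed) difference of the upper and lower bounds rather than by a maximum.
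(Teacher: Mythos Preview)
Your proof is correct and follows essentially the same approach as the paper: both integrate the tail-difference formula for $d_W$ and exploit the fact that $e^{-t/m}$ itself lies between the two bounding curves. The paper splits the integrand via $|U-e^{-t/m}| + |e^{-t/m}-L|$ and checks the signs of each piece separately, whereas you bound directly by $U-L$; your version is slightly cleaner but the content is the same, and both land exactly on $2(\alpha+\beta)m$.
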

\begin{pf}Assume $\widetilde{Z}=_d\Expdist(m,\alpha,\beta)$ and
$Z=_d\Expdist(m)$ are given. By convexity,
\begin{eqnarray*}
d_W(\widetilde{Z},Z) &=& \int_0^{+\infty}\bigl|
\mathbf{P} (\widetilde{Z}\geq t )-e^{-{t}/{m}}\bigr| \,dt
\\
&\leq& \int_{0}^{\infty
}\max_{\xi\in\{-1,+1\}}\bigl|(1+
\xi\alpha)_+e^{-{t}/{((1+\xi\beta
)m)}}-e^{-{t}/{m}}\bigr| \,dt
\\
&\leq& \int_{0}^{\infty}\bigl|(1+\alpha)e^{-{t}/{((1+\beta
)m)}}-e^{-{t}/{m}}\bigr|
\,dt
\\
& &{} + \int_{0}^{\infty}\bigl|(1-\alpha)_+
e^{-
{t}/{((1-\beta)m)}}-e^{-{t}/{m}}\bigr| \,dt
\\
&=:& (I) + (\mathit{II}).
\end{eqnarray*}
For the first term on the RHS, we note that
\[
\forall t\geq0\qquad (1+\alpha)e^{-{t}/{((1+\beta)m)}}-e^{-{t}/{m}}\geq0,
\]
hence
\[
(I) = \int_{0}^{\infty} \bigl\{(1+
\alpha)e^{-{t}/{((1+\beta
)m)}}-e^{-{t}/{m}} \bigr\} \,dt = [\alpha+\beta+\alpha\beta]
m.
\]
Similarly, for term $(\mathit{II})$ we have
\[
\forall t\geq0\qquad (1-\alpha)_+ e^{-{t}/{((1-\beta)m)}}-e^{-{t}/{m}}\leq0
\]
hence
\[
(\mathit{II}) = \int_{0}^{\infty} \bigl\{e^{-{t}/{m}} -
(1-\alpha)_+e^{-
{t}/{((1-\beta)m)}} \bigr\} \,dt \leq[\alpha+\beta-\alpha\beta]m.
\]
Hence
\[
d_W(\widetilde{Z},Z)\leq(I) + (\mathit{II}) = 2(\alpha+\beta) m.
\]
\upqed\end{pf}

\ignore

\subsection{Hitting times are nearly exponential}

In this section we consider a mixing continuous-time Markov chain $\{
\mathbf{P}_{x}\}_{x\in\bE}$ with generator $Q$, taking values over a finite
state space $\bE$, with unique stationary distribution $\pi$. Given a
nonempty $A\subset\bE$ with $\pi(A)>0$, we define the \emph{hitting
time of $A$} to be
\[
H_A(\omega)\equiv\inf \bigl\{t\geq0 \dvtx \omega(t)\in A \bigr\}\qquad \bigl(
\omega\in\bbD \bigl([0,+\infty ),\bE \bigr)\bigr).
\]
The condition $\pi(A)>0$ ensures that $\mathbf{E}_{x} [H_A
]<+\infty$ for all
$x\in\bE$.

Our first result in this section presents sufficient conditions on $A$
and $\mu\in M_1(\bE)$ that ensure that $H_A$ is approximately
exponentially distributed.

%
\begin{theorem}\label{thm:rareeventsI}In the above Markov chain
setting, assume that $0<\eps<\delta<1/5$ are such that
\[
\mathbf{P}_{\pi} \bigl(H_A\leq t_{\mathrm{mix}}^Q(
\delta\eps ) \bigr)\leq\delta\eps.
\]
Let $t_\eps(A)$ be the $\eps$-quantile of $\operatorname{Law}_{\pi
} (H_A )$, that is,
the unique number $t_\eps(A)\in[0,+\infty)$ with $\mathbf{P}_{\pi
} (H_A\leq t_\eps(A) )=\eps$ [this is well defined since
$\mathbf{P}_{\pi} (H_A\leq t )$ is
a continuous and strictly increasing function of $t$ in our
setting]. Given $\lambda\in M_1(\bE)$, write
\[
r_\lambda\equiv\mathbf{P}_{\lambda} \bigl(H_A\leq
t_{\mathrm
{mix}}^Q( \delta\eps) \bigr).
\]
Then
\[
\operatorname{Law}_{\lambda} (H_A )=\Expdist \biggl(
\frac
{t_\eps(A)}{\eps},O (\eps )+2r_\lambda, O (\delta ) \biggr).
\]
Moreover,
\[
\biggl\llvert\frac{\eps\mathbf{E}_{\pi} [H_A ]}{t_\eps
(A)}-1 \biggr\rrvert=O (\delta )
\]
and
\[
\operatorname{Law}_{\lambda} (H_A )=_d \Expdist
\bigl(\mathbf {E}_{\pi} [H_A ],O (\eps )+2r_\lambda,
O (\delta ) \bigr).
\]
\end{theorem}

We emphasize that results similar to this are not new in the literature
\cite{Aldous_ExpHittingTimes,AldousBrown_RareEvents}, but the
lower-tail part of our result does not\vadjust{\goodbreak} seem to be explicit anywhere.
The proof is strongly related to that in~\cite{Aldous_ExpHittingTimes},
but we wish to stress the relationship between the quantile $t_\eps(A)$
and the exponential approximation, which we will need below.

The second result considers what happens when we have an union of
events
\[
A=A_1\cup A_2\cup\cdots\cup A_\ell.
\]
As described in the \hyperref[sec1]{Introduction}, we give a sufficient condition under
which the hitting time $H_A$ behaves like a minimum of independent exponentials.
%
%
\begin{theorem}\label{thm:rareeventsII}Assume that the set $A$
considered above can be written~as
\[
A = \bigcup_{i=1}^\ell A_i,
\]
where the sets $A_1,\ldots,A_\ell$ are nonempty and
\[
m:=\mathbf{E}_{\pi} [H_{A_1} ] =\mathbf{E}_{\pi}
[H_{A_2} ]=\cdots=\mathbf{E}_{\pi} [H_{A_\ell} ].
\]
Assume $0<\delta<1/5$, $0<\eps<\delta/2\ell$ are such that for all
$1\leq i\leq\ell$,
\[
\forall i\in[\ell] \qquad\mathbf{P}_{\pi} \bigl(H_{A_i}\leq
t_{\mathrm
{mix}}^Q(\delta \eps/2) \bigr)\leq\frac{\delta\eps}{2}.
\]
Then for all $\lambda\in M_1(\bE)$,
\[
\operatorname{Law}_{\lambda} (H_A )=\Expdist \biggl(
\frac
{m}{\ell},2r_\lambda +O (\ell\eps ),O (\delta+\xi ) \biggr),
\]
where
\[
r_\lambda\equiv\mathbf{P}_{\lambda} \bigl(H_A\leq
t_{\mathrm
{mix}}^Q( \delta\eps) \bigr)
\]
and
\[
\xi\equiv\frac{1}{\ell\eps}\sum_{1\leq i<j\leq\ell}\mathbf
{P}_{\pi} (H_{A_i}\leq\eps m, H_{A_j}\leq\eps m ).
\]
\end{theorem}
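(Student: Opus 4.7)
The plan is to apply Theorem \ref{thm:rareeventsI} twice: first to each $A_i$ individually in order to pin down $\Prp{\pi}{H_{A_i}\leq \eps m}$, and then to $A$ itself. The pairwise-correlation bound $\xi$ bridges the two applications via Bonferroni's inequality, yielding a sharp estimate for $\Prp{\pi}{H_A\leq \eps m}$. That estimate identifies the correct quantile of $H_A$, from which the target mean $m/\ell$ emerges via the quantile-expectation relation supplied by Theorem \ref{thm:rareeventsI}.

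For the first application: since $\Tmix^Q$ is decreasing in its argument, $\Prp{\pi}{H_{A_i}\leq \Tmix^Q(\delta\eps)}\leq \delta\eps/2\leq \delta\eps$, and Theorem \ref{thm:rareeventsI} at parameters $(\eps,\delta)$ gives $\Lawp{\pi}{H_{A_i}}=\Expdist(m,\bigoh{\eps},\bigoh{\delta})$ after absorbing $2r_\pi\leq\delta\eps$ into $\bigoh{\eps}$. A second-order Taylor expansion of the two envelopes of this distribution at $t=\eps m$ yields $\Prp{\pi}{H_{A_i}\leq \eps m}=\eps(1+\bigoh{\delta})$. Combining the union bound with the Bonferroni lower bound and the definition of $\xi$ then gives
\[
\Prp{\pi}{H_A\leq \eps m} \;=\; p \;:=\; \ell\eps\,(1+\bigoh{\delta+\xi}),
\]
which, since $\Prp{\pi}{H_A\leq\cdot}$ is continuous and strictly increasing, is equivalent to the identity $t_p(A)=\eps m$.

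For the second application, Theorem \ref{thm:rareeventsI} is invoked on $A$ itself with quantile parameter $p$ and the same $\delta$. The constraint $p<\delta$ holds since $\ell\eps<\delta/2$, and $\Prp{\pi}{H_A\leq \Tmix^Q(\delta p)}\leq \delta p$ follows from monotonicity of $\Tmix^Q$ ($\delta p\geq \delta\eps/2$) and a union bound over the $A_i$. This returns the exponential approximation $\Lawp{\lambda}{H_A}=\Expdist(\Exp{\pi}{H_A},\,2r_\lambda^{(I)}+\bigoh{p},\,\bigoh{\delta})$, where $r_\lambda^{(I)}=\Prp{\lambda}{H_A\leq \Tmix^Q(\delta p)}\leq r_\lambda$, together with the identity $t_p(A)=p\,\Exp{\pi}{H_A}(1+\bigoh{\delta})$. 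Combining with $t_p(A)=\eps m$ yields $\Exp{\pi}{H_A}=(m/\ell)(1+\bigoh{\delta+\xi})$; Proposition \ref{prop:changemean} then reparametrises the mean from $\Exp{\pi}{H_A}$ to $m/\ell$ at the cost of $\bigoh{\delta+\xi}$ added to $\beta$, and $\bigoh{p}=\bigoh{\ell\eps}$ finishes the proof. The central technical obstacle is the bookkeeping in the Bonferroni step: the linear-in-$\eps$ term must be extracted with \textbf{multiplicative} error $\bigoh{\delta+\xi}$, not merely additive $\bigoh{\ell\eps}$, which is why the hypothesis $\eps<\delta/(2\ell)$ is needed to ensure that the $\bigoh{(\ell\eps)^2}$ Taylor remainders and the $\alpha$-factor cross-terms are dominated by $\ell\eps\cdot\delta$.
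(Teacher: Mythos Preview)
Your overall architecture matches the paper's exactly: estimate each $\Prp{\pi}{H_{A_i}\leq \eps m}$, feed these into a Bonferroni sandwich to get $\Prp{\pi}{H_A\leq \eps m}=\ell\eps(1+\bigoh{\delta+\xi})$, and then re-enter Theorem~\ref{thm:rareeventsI} for $A$ at quantile level $\approx\ell\eps$. Steps~2 and~3 of your sketch are essentially the paper's Claim~\ref{claim:etaishere} and its sequel.

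There is, however, a real gap in your Step~1. You claim that a Taylor expansion of the two $\Expdist(m,\bigoh{\eps},\bigoh{\delta})$ envelopes at $t=\eps m$ yields $\Prp{\pi}{H_{A_i}\leq \eps m}=\eps(1+\bigoh{\delta})$. It does not. Write $\alpha=\bigoh{\eps}$, $\beta=\bigoh{\delta}$; the envelope bound gives
\[
1-(1+\alpha)\,e^{-\eps/(1+\beta)}\;\leq\;\Prp{\pi}{H_{A_i}\leq \eps m}\;\leq\;1-(1-\alpha)\,e^{-\eps/(1-\beta)}.
\]
Expanding, the right-hand side is $\alpha + \eps(1+\bigoh{\delta}) + \bigoh{\eps^2}$ and the left-hand side is $-\alpha + \eps(1+\bigoh{\delta}) + \bigoh{\eps^2}$. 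The leading $\pm\alpha$ term is \emph{not} a cross-term: it is the bare multiplicative prefactor evaluated at $e^{-\eps}\approx 1$, and it contributes an additive $\bigoh{\eps}$ error that is of the same order as the main term $\eps$. Thus the envelope alone only gives $\Prp{\pi}{H_{A_i}\leq \eps m}=\eps\cdot\bigoh{1}$, which is useless for the Bonferroni step. Your closing remark that ``the $\alpha$-factor cross-terms are dominated by $\ell\eps\cdot\delta$'' overlooks this non-cross contribution; the hypothesis $\eps<\delta/(2\ell)$ does not repair it.

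The paper's Claim~\ref{claim:quantilesaregood} sidesteps this by using the \emph{quantile--expectation} conclusion of Theorem~\ref{thm:rareeventsI}, namely $t_{\eps'}(A_i)=\eps' m\,(1+\bigoh{\delta})$, which carries multiplicative error $\bigoh{\delta}$ and is unaffected by $\alpha$. Applying that relation for all $\eps'\in[\eps/2,2\eps]$ and using monotonicity of $\eps'\mapsto t_{\eps'}(A_i)$ traps $\Prp{\pi}{H_{A_i}\leq \eps m}$ between $(1-c\delta)\eps$ and $(1+c\delta)\eps$. If you replace your Taylor-expansion sentence with this quantile argument, the rest of your proof goes through verbatim.
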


%
\begin{remark} If the $H_{A_i}$ are in fact independent, then $\xi
=O (\eps\ell )$.
\end{remark}

The remainder of the section is devoted to the proof of these two results.

\subsection{Hitting time of a single set: Proofs}

We first present the proof of \thmref{rareeventsI} modulo two important
lemmas, and subsequently prove those lemmas.

\begin{pf*}{Proof of \thmref{rareeventsI}}
Let $\lambda\in M_1(\bE)$ be arbitrary. Throughout the proof we will
assume implicitly that $\delta+r_\lambda+\eps$ is smaller than some
sufficiently small absolute constant; the remaining case is easy to
handle by increasing the value of $C_0$ if necessary.

We begin with an upper bound for $\mathbf{P}_{\lambda} (H_A\geq
t )$ in terms of
$t_\eps(A)$.
%
%
\begin{lemma}[(Proven in \secref{rareeventupper_proof})]\label
{lem:rareeventupper}Under the assumptions of \thmref{rareeventsI},
\[
\forall t\geq0\qquad \mathbf{P}_{\lambda} (H_A\geq t )\leq
\bigl(1+O (\eps )\bigr) e^{-{\eps(1+O (\delta )) t}/{t_\eps(A)}}.
\]
\end{lemma}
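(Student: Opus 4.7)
My plan is to establish the bound by a simple iteration of the Markov property in steps of length $T \equiv t_\eps(A)$, after first deriving a uniform upper bound on the worst-case hitting tail $\bar\phi(T) \equiv \max_{y \in \bE}\Prp{y}{H_A > T}$.

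The proof rests on two auxiliary inequalities. Set $\phi(s) \equiv \Prp{\pi}{H_A > s}$ and $\tau \equiv \Tmix^Q(\delta\eps)$. First, using the identity $\{H_A > \tau + s\} = \{H_A > \tau\}\cap\{H_A\circ\Theta_\tau > s\}$ together with the fact that under $\Prpwo{\pi}$ the random variable $H_A\circ\Theta_\tau$ has the same law as $H_A$ (by shift-invariance of $\pi$), one obtains
\[
\phi(\tau+s) \;\geq\; \Prp{\pi}{H_A\circ\Theta_\tau > s} - \Prp{\pi}{H_A \leq \tau} \;\geq\; \phi(s) - \delta\eps,
\]
which rearranges to $\phi(t-\tau) \leq \phi(t) + \delta\eps$ for every $t \geq \tau$. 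Second, for any $y\in \bE$ and $t\geq \tau$, the inclusion $\{H_A > t\}\subseteq \{H_A\circ\Theta_\tau > t - \tau\}$, the Markov property at time $\tau$, and the mixing bound $\dtv(\Lawp{y}{X_\tau},\pi) \leq \delta\eps$ give
\[
\Prp{y}{H_A > t} \;\leq\; \Exp{y}{\Prp{X_\tau}{H_A > t-\tau}} \;\leq\; \phi(t-\tau) + \delta\eps.
\]

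Applying both inequalities at $t = T$ (which exceeds $\tau$, since $\Prp{\pi}{H_A\leq \tau}\leq \delta\eps < \eps$ forces $\tau$ to lie below the $\eps$-quantile of $\Lawp{\pi}{H_A}$) yields $\bar\phi(T) \leq \phi(T) + 2\delta\eps = 1 - \eps(1-2\delta)$. The Markov property at time $s$ then gives, for any $\lambda\in M_1(\bE)$ and $s\geq 0$,
\[
\Prp{\lambda}{H_A > s+T} \;=\; \Exp{\lambda}{\Ind{H_A>s}\,\Prp{X_s}{H_A>T}} \;\leq\; \Prp{\lambda}{H_A > s}\,\bar\phi(T).
\]
Iterating produces $\Prp{\lambda}{H_A > nT} \leq (1-\eps(1-2\delta))^n$ for every $n\in\N$. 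For arbitrary $t\geq 0$ I take $n = \lfloor t/T \rfloor$ and apply $(1-x)^a \leq e^{-ax}$ together with the estimate $(1-\eps(1-2\delta))^{-1} = 1 + \bigoh{\eps}$; the result matches the claim $(1+\bigoh{\eps})\,e^{-\eps(1+\bigoh{\delta})t/T}$ upon reading $1 - 2\delta$ as $1 + \bigoh{\delta}$ in the sense of Notational convention~\ref{not:bigoh}.

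The main obstacle is the first auxiliary inequality. The naive alternative — iterating in cycles of length $T + \tau$, each comprising a ``productive'' interval of length $T$ and a mixing interval of length $\tau$ — would give rate $\eps(1-\delta)/(T+\tau)$ in the exponent, and the $(1+\bigoh{\delta})/T$ dependence would then require $\tau \leq \bigoh{\delta}\,T$, which is not implied by the hypotheses of the theorem. The stationarity-based bound $\phi(t-\tau)\leq \phi(t) + \delta\eps$ absorbs the time shift $\tau$ into an additive error $\delta\eps$ and lets us advance by exactly $T$ per iteration, yielding the correct exponent without any extra assumption on how small $\tau$ is relative to $T$.
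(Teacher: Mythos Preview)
Your proof is correct and follows essentially the same approach as the paper's. Both arguments iterate the Markov property in steps of length $t_\eps(A)$, and both use the same two ingredients: a mixing step (your second auxiliary inequality) to compare an arbitrary starting point to $\pi$, and a stationarity step (your first auxiliary inequality, the paper's computation of $\Prp{\pi}{H_A>t_\eps(A)-T}$) to absorb the $\tau$-shift into an additive $\delta\eps$ error. The only organizational difference is that you first package these into a uniform bound $\bar\phi(t_\eps(A))\leq 1-\eps(1-2\delta)$ and then iterate, whereas the paper carries out the mixing estimate inside each iteration step; the resulting recursion and final bound are identical.
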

In particular, this implies
%
%
\begin{equation}
\label{eq:expectationupper}\qquad\forall\mu\in M_1(\bE)\qquad
\mathbf{E}_{\mu
} [H_A ] = \int_0^{+\infty}
\mathbf{P}_{\mu} (H_A\geq t ) \,dt\leq\bigl(1+O (\delta )
\bigr) \frac{t_\eps
(A)}{\eps}.
\end{equation}
It turns out that the upper bound in the above lemma can be nearly
reversed if we start from some distribution that is ``far'' from $A$.
%
%
\begin{lemma}[(Proven in \secref{rareeventlower_proof})]\label
{lem:rareeventlower}With the assumptions of \thmref{rareeventsI}, if
$2\eps+r_\lambda<1/2$,
\[
\forall t\geq0\qquad \mathbf{P}_{\lambda} (H_A\geq t )\geq
\bigl(1-O (\eps )-r_\lambda\bigr)_+ e^{-{\eps(1+O
(\delta )) t}/{t_\eps(A)}}.
\]
\end{lemma}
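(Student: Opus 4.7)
The strategy is to first prove the bound for the stationary initial distribution $\lambda=\pi$ and then transfer to general $\lambda$ via the mixing hypothesis. Write $s:=t_\eps(A)$, $\tau_0:=\Tmix^Q(\delta\eps)$, and set $G(t):=\Prp{\pi}{H_A>t}$. By the quantile property $G(s)=1-\eps$, and by assumption $G(\tau_0)\geq 1-\delta\eps$.

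The main step is an iterative Markov-property argument at multiples of $u:=s+\tau_0$. By the Markov property at time $u$,
$$G((k+1)u) = G(u)\,\Prp{\tilde\nu}{H_A>ku}, \qquad \tilde\nu := \Lawp{\pi}{X_u\mid H_A>u}.$$
The key technical estimate is $\dtv(\tilde\nu,\pi)=\bigoh{\delta\eps}$, which I would prove by decomposing $\{H_A\leq u\}$ into (i) the ``mixed'' part $\{H_A\leq s\}$, where $u-H_A\geq \tau_0$, so that the strong Markov property at $H_A$ together with the mixing hypothesis places $\Lawp{X_{H_A}}{X_{u-H_A}}$ within $\delta\eps$ of $\pi$ in total variation, contributing an error of order $\eps\cdot\delta\eps$; and (ii) the ``boundary'' part $\{s<H_A\leq u\}$, whose probability is at most $\Prp{\pi}{H_A\leq \tau_0}\leq \delta\eps$ by stationarity (shifting the chain by $s$). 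Combining these two pieces yields the inductive inequality
$$G((k+1)u) \geq (1-\eps-\bigoh{\delta\eps})\,G(ku) - \bigoh{\delta\eps},$$
and unrolling the recursion produces $G(nu)\geq (1-\eps)^n-\bigoh{\delta}$. Converting via $\ln(1-\eps)=-\eps(1+\bigoh{\eps})$, and bounding $u/s$ by combining $G(\tau_0)\geq 1-\delta\eps$ with \lemref{rareeventupper} applied at $t=\tau_0$, recovers the exponential rate $\eps(1+\bigoh{\delta})/s$. The residual additive $\bigoh{\delta}$ that accumulates over iterations is absorbed into the positive-part coefficient $(1-\bigoh{\eps})_+$, which kills the bound once the slack exceeds the target exponential.

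To extend from $\pi$ to arbitrary $\lambda$, apply the Markov property at time $\tau_0$. Using $\dtv(\Lawp{\lambda}{X_{\tau_0}},\pi)\leq \delta\eps$ from the mixing hypothesis, together with $\Prp{\lambda}{H_A\leq\tau_0}=r_\lambda$, the standard decomposition gives
$$\Prp{\lambda}{H_A>\tau_0+t} \geq G(t) - r_\lambda - \delta\eps.$$
Substituting the stationary lower bound just proved for $G$, and folding the $\tau_0$-shift into the $(1+\bigoh{\delta})$ factor inside the rate, completes the argument. The hypothesis $2\eps+r_\lambda<1/2$ is precisely what guarantees that $(1-\bigoh{\eps}-r_\lambda)_+$ is nondegenerate in the relevant regime.

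The main obstacle is the TV estimate $\dtv(\tilde\nu,\pi)=\bigoh{\delta\eps}$: one has to balance the strong Markov contribution on the mixed event against the stationarity-based bound on the boundary event in a way that produces an error proportional to $\delta\eps$ rather than merely $\eps$; using the hypothesis $\Prp{\pi}{H_A\leq\tau_0}\leq\delta\eps$ at both places is essential. A secondary technical point is the passage from the linear-plus-additive iteration $G(nu)\geq(1-\eps)^n-\bigoh{\delta}$ to the multiplicative-with-positive-part form in the lemma statement, which is exactly where the $(\cdot)_+$ in the target coefficient does its work.
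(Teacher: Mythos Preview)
Your total-variation estimate $\dtv(\tilde\nu,\pi)=\bigoh{\delta\eps}$ is correct, and the decomposition of $\{H_A\leq u\}$ into the ``mixed'' and ``boundary'' pieces is sound. The gap is in how you use it. The recursion you obtain has an \emph{additive} error,
\[G((k+1)u)\geq (1-\eps-\bigoh{\delta\eps})\,G(ku)-\bigoh{\delta\eps},\]
which unrolls to $G(nu)\geq a^n - \bigoh{\delta}$ with $a=1-\eps(1+\bigoh{\delta})$. This cannot be converted into the multiplicative form $(1-\bigoh{\eps})_+\,e^{-rt}$ that the lemma asserts: under the hypotheses of \thmref{rareeventsI} one has $\eps<\delta$, so the inequality $a^n-\bigoh{\delta}\geq (1-\bigoh{\eps})a^n$, i.e.\ $\eps\,a^n\gtrsim\delta$, fails already at $n=0$ and certainly for all large $n$. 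Once $a^n$ drops below the accumulated $\bigoh{\delta}$ your bound becomes negative and hence vacuous, whereas the target $(1-\bigoh{\eps})e^{-rt}$ stays strictly positive for all $t$. The positive part in the statement is applied to the \emph{coefficient} $1-\bigoh{\eps}-r_\lambda$, not to the whole expression, so it cannot absorb this additive slack. The same defect recurs in your transfer from $\pi$ to $\lambda$: the bound $G(t)-r_\lambda-\delta\eps$ is additive in $r_\lambda$, not multiplicative.

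The paper avoids both problems by working directly with $\lambda$ from the outset and deriving a purely multiplicative recursion on $f(k)=\Prp{\lambda}{H_A\geq k\,t_\eps(A)}$. Writing $f(k+1)\geq (I)-(II)$, term $(I)$ is handled as you would expect, but the boundary term $(II)=\Prp{\lambda}{H_A\geq kt_\eps(A),\,H_A\circ\Theta_{kt_\eps(A)}<T}$ is bounded by $2\delta\eps\,f(k-1)$ \emph{relative to $f(k-1)$}, not by $\bigoh{\delta\eps}$ absolutely. One then bootstraps via $f(k-1)\leq 2f(k)$ (which follows simultaneously from the recursion and the base case $f(1)\geq 1-2\eps-r_\lambda>1/2$, this last using the hypothesis $2\eps+r_\lambda<1/2$) to obtain $f(k+1)\geq f(k)(1-\eps-5\delta\eps)$ with no additive remainder. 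A secondary issue with your approach is the step size $u=s+\tau_0$: the argument you sketch via \lemref{rareeventupper} only yields $\tau_0/s=\bigoh{1}$, not $\bigoh{\delta}$, so your exponential rate would be off by a constant factor rather than by $1+\bigoh{\delta}$; the paper's use of steps of size $t_\eps(A)$ sidesteps this as well.
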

Notice that the combination of these two lemmas already implies the
first statement in the proof, as it shows that for all $t\geq0$,
\begin{eqnarray*}
&&\mathbf{P}_{\lambda} (H_A\geq t ) \\
&&\qquad\in \bigl[\bigl(1-O (\eps
)-2r_\lambda\bigr) e^{-
{\eps t}/{((1+O (\delta )) t_\eps(A))}},\bigl(1+O (\eps )\bigr)
e^{-
{\eps t}/{((1+O (\delta )) t_\eps(A))}} \bigr].
\end{eqnarray*}
To see this, notice that the upper bound is always valid by \lemref
{rareeventupper}. For the lower bound, we use \lemref{rareeventlower}
if $2\eps+r_\lambda\leq1/2$, and note that the lower bound is $0$ if
$2\eps+r_\lambda>1/2$ and the constant in the $O (\eps )$ term
is at
least $4$.

We now prove the assertion about expectations in the theorem. We use
\lemref{approxwasserstein} and deduce
\begin{eqnarray*}
\biggl\llvert\mathbf{E}_{\pi} [H_A ] - \frac{t_\eps
(A)}{\eps
}
\biggr\rrvert& \leq& d_W \bigl(\operatorname{Law}_{\pi}
(H_A ),\Expdist \bigl( \eps^{-1} t_\eps(A)
\bigr) \bigr)
\\
&\leq& O (\delta+r_\pi ) \frac{t_\eps(A)}{\eps},
\end{eqnarray*}
and the assertion follows from dividing by $\eps^{-1} t_\eps(A)$
and noting that
\[
r_\pi=\mathbf{P}_{\pi} \bigl(H_A\leq
t_{\mathrm{mix}}^Q(\delta \eps) \bigr)\leq\delta\eps
\]
by
assumption. The final assertion in the theorem then follows from
\propref{changemean}.
\end{pf*}

\subsubsection{\texorpdfstring{Proof of \protect\lemref{rareeventupper}}{Proof of Lemma 3.2}}\label{sec:rareeventupper_proof}
\mbox{}
\begin{pf}Set $T=t_{\mathrm{mix}}^Q(\delta\eps)$. We note for later reference
that $T<t_\eps(A)$, since
\[
\mathbf{P}_{\pi} (H_A\leq T )\leq\delta\eps<\eps =
\mathbf{P}_{\pi} \bigl(H_A\leq t_\eps(A) \bigr).
\]
Our main goal will be to show the following inequality:
%
%
\begin{equation}
\qquad\forall k\in\N\qquad \mathbf{P}_{\lambda} \bigl(H_A>(k+1)t_\eps
(A) \bigr)\leq (1-\eps+2\delta\eps) \mathbf{P}_{\lambda}
\bigl(H_A>kt_\eps (A) \bigr).
\end{equation}
Once established, this goal will imply
\[
\forall k\in\N\qquad \mathbf{P}_{\lambda} \bigl(H_A\geq
kt_\eps (A) \bigr) \leq(1-\eps+2\delta\eps)^{k}
\]
and
\[
\forall t\geq0\qquad \mathbf{P}_{\lambda} (H_A\geq t )\leq
e^{-\eps(1+O (\delta  )) \lfloor{t}/{t_\eps(A)}
\rfloor} = \bigl(1+O (\eps )\bigr) e^{-{\eps t}/{((1+O (\delta
))t_\eps(A))}},
\]
which is the desired result.
To achieve the goal, we fix some $k\in\N$ and use $T\leq t_\eps(A)$
to bound
\begin{eqnarray*}
\label{eq:tripartition}\mathbf{P}_{\lambda} \bigl(H_A>(k+1)t_\eps
(A) \bigr) &\leq& \mathbf{P}_{\lambda}\pmatrix{
H_A>kt_\eps(A),
\vspace*{2pt}\cr
H_A\circ\Theta_{kt_\eps
(A)+T}>t_\eps(A)-T },
\\
\mbox{(Markov prop.)}&=& \mathbf{P}_{\lambda}
\bigl(H_A>kt_\eps (A) \bigr)\mathbf{P}_{\Lambda}
\bigl(H_A>t_\eps(A)-T \bigr),
\end{eqnarray*}
where $\Lambda$ is the law of $X_{kt_\eps(A)+T}$ conditioned on $\{
H_A>kt_\eps(A)\}$. Since this event belongs to $\sF_{kt_\eps(A)}$ and
$T=t_{\mathrm{mix}}^Q(\delta\eps)$, $\Lambda$ is $\delta\eps
$-close to $\pi
$ in
total variation distance. We deduce
%
%
\begin{equation}
\label{eq:secondver_true}\frac{\mathbf{P}_{\lambda }
(H_A>(k+1)t_\eps(A) )}{\mathbf{P}_{\lambda} (H_A>kt_\eps
(A) )}\leq
\mathbf{P}_{\pi} \bigl(H_A>t_\eps(A)-T \bigr) +
\delta\eps.
\end{equation}
Now observe that
\begin{eqnarray*}
\mathbf{P}_{\pi} \bigl(H_A>t_\eps(A)-T \bigr)
&\leq& \mathbf {P}_{\pi} \bigl(H_A>t_\eps(A)
\bigr)
\\
& &{} + \mathbf{P}_{\pi} \bigl(H_A\in\bigl(t_\eps(A)-T,t_\eps(A)\bigr]
\bigr)
\\
&\leq& \mathbf{P}_{\pi} \bigl(H_A>t_\eps(A)
\bigr)
\\
& & {}+ \mathbf{P}_{\pi} (H_A\circ\Theta_{t_\eps (A)-T}
\leq T ),
\\
\bigl(\mbox{defn. of }t_\eps(A)\bigr) &=& 1-\eps+
\mathbf{P}_{\pi} (H_A\circ\Theta_{t_\eps(A)-T}\leq T ),
\\
\mbox{($\pi$ stationary)} &=& 1-\eps+ \mathbf{P}_{\pi}
(H_A\leq T ),
\\
\bigl(T=t_{\mathrm{mix}}^Q(\delta\eps) + \mbox{assumption}\bigr)&
\leq& 1- \eps+ \delta\eps,
\end{eqnarray*}
and plugging this into \eqnref{secondver_true} gives
\[
\frac{\mathbf{P}_{\lambda} (H_A>(k+1)t_\eps(A)
)}{\mathbf{P}_{\lambda } (H_A>kt_\eps (A) )}\leq \bigl(1-\eps(1-2\delta) \bigr)
\]
as desired.
\end{pf}

\subsubsection{\texorpdfstring{Proof of \protect\lemref{rareeventlower}}{Proof of Lemma 3.3}}\label{sec:rareeventlower_proof}
\mbox{}
\begin{pf}The general scheme of the proof is similar to that of
\lemref{rareeventupper}, but we will need to be a bit more careful in
our estimates. In particular, we will need that $(1+5\delta)\eps<1/2$
and $2\eps+r_\lambda<1/2$.\vadjust{\goodbreak}

Define $T\equiv t_{\mathrm{mix}}^Q(\delta\eps)$ as in the proof of
\lemref
{rareeventupper} in \secref{rareeventupper_proof}. Again observe that
$T<t_\eps(A)$. Define
\[
f(k)\equiv\mathbf{P}_{\lambda} \bigl(H_A\geq
kt_\eps(A) \bigr)\qquad (k\in\N).
\]
Clearly, $f(0)=1$ and
%
%
\begin{eqnarray}
\label{eq:basecase}f(1)&\geq&\mathbf{P}_{\lambda} \bigl(H_A
\circ \Theta_T\geq t_\eps(A) \bigr) -
\mathbf{P}_{\lambda} (H_A \leq T )\geq1-\eps-\delta
\eps-r_\lambda
\nonumber
\\[-8pt]
\\[-8pt]
\nonumber
&\geq&1-2\eps -r_\lambda
\end{eqnarray}
since $T=t_{\mathrm{mix}}^Q(\delta\eps)$, and by the properties of
mixing times,
\[
\mathbf{P}_{\lambda} \bigl(H_A\circ\Theta_T\geq
t_\eps(A) \bigr) \geq\mathbf{P}_{\pi} \bigl(H_A
\geq t_\eps(A) \bigr) - \delta\eps.
\]
We now claim the following:
%
%
\begin{claim}For all $k\in\N\setminus\{0\}$,
\[
\frac{f(k+1)}{f(k)}\geq(1-\eps-5\delta\eps).
\]
\end{claim}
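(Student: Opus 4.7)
The plan mirrors the upper-bound argument (\lemref{rareeventupper}) while controlling a new boundary error specific to the lower-bound direction. Since the event $\{H_A>kt_\eps(A)\}$ is $\sF_{kt_\eps(A)}$-measurable, the Markov property gives the exact identity
$$\frac{f(k+1)}{f(k)}=\Prp{\nu}{H_A>t_\eps(A)},$$
where $\nu$ denotes the conditional law of $X_{kt_\eps(A)}$ given $\{H_A>kt_\eps(A)\}$. The Claim is thus equivalent to $\Prp{\nu}{H_A\leq t_\eps(A)}\leq\eps+5\delta\eps$, and I plan to split this at the mixing scale $T=\Tmix^Q(\delta\eps)<t_\eps(A)$ into a late piece $\Prp{\nu}{T<H_A\leq t_\eps(A)}$ and an early piece $\Prp{\nu}{H_A\leq T}$.

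For the late piece, the Markov property at time $T$ together with $\dtv(\nu P_T,\pi)\leq\delta\eps$ gives $\Prp{\nu}{T<H_A\leq t_\eps(A)}\leq \Prp{\nu P_T}{H_A\leq t_\eps(A)-T}\leq \Prp{\pi}{H_A\leq t_\eps(A)}+\delta\eps=\eps+\delta\eps$. For the early piece, the key step is the pointwise domination
$$f(k)\,\nu(y)=\Prp{\lambda}{X_{kt_\eps(A)}=y,\,H_A>kt_\eps(A)}\leq\Prp{\lambda}{X_{kt_\eps(A)}=y}=(\lambda P_{kt_\eps(A)})(y).$$
Since $k\geq 1$ forces $kt_\eps(A)>T$, mixing yields $\dtv(\lambda P_{kt_\eps(A)},\pi)\leq\delta\eps$, and combining with the hypothesis $\Prp{\pi}{H_A\leq T}\leq\delta\eps$ gives $f(k)\,\Prp{\nu}{H_A\leq T}\leq\Prp{\lambda P_{kt_\eps(A)}}{H_A\leq T}\leq 2\delta\eps$.

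Assembling these two estimates yields the recursive inequality $f(k+1)\geq f(k)(1-\eps-\delta\eps)-2\delta\eps$. Whenever $f(k)\geq 1/2$, the additive $2\delta\eps$ is absorbed as $4\delta\eps\cdot f(k)$, producing the multiplicative bound $f(k+1)\geq f(k)(1-\eps-5\delta\eps)$ that the Claim demands. The base case $f(1)\geq 1-2\eps-r_\lambda>1/2$ comes from \eqnref{basecase} and the standing hypothesis $2\eps+r_\lambda<1/2$, and iterating preserves $f(k)\geq 1/2$ across an initial range of $k$.

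The hard part I expect is continuing past the threshold where $f(k)$ drops below $1/2$, since the crude bound $\Prp{\nu}{H_A\leq T}\leq 2\delta\eps/f(k)$ is then too weak to yield a uniform multiplicative estimate. The heuristic resolution is that after prolonged conditioning on avoidance, $\nu$ approaches the quasi-stationary distribution $\pi^*$ of $Q$ killed on $A$, for which $\Prp{\pi^*}{H_A\leq T}=1-e^{-\lambda_1^* T}\leq\delta\eps$ holds uniformly. Rigorously carrying this out in the non-reversible generality of the theorem, without a priori spectral-gap estimates, seems to require a Perron--Frobenius / contraction argument on the sub-Markovian semigroup restricted to $\bV\setminus A$, and this is the step where I expect the real work to lie.
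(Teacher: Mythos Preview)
Your decomposition and the ``late piece'' bound are correct, and so is the recursive inequality
\[
f(k+1)\geq f(k)(1-\eps-\delta\eps)-2\delta\eps.
\]
You have also correctly identified the gap: this inequality only yields the Claim while $f(k)\geq 1/2$, and that threshold is eventually crossed. The quasi-stationary route you propose would work in principle but is far heavier than what is needed.

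The paper closes the gap by a one-line elementary trick you are missing. In the ``early piece'' term
\[
(II)=\Prp{\lambda}{H_A> kt_\eps(A),\ H_A\circ\Theta_{kt_\eps(A)}<T},
\]
instead of dominating the conditioned measure $\nu$ by the \emph{unconditional} $\lambda P_{kt_\eps(A)}$ (which costs a full factor $1/f(k)$), relax the conditioning one step back: $(II)\leq \Prp{\lambda}{H_A>(k-1)t_\eps(A),\ H_A\circ\Theta_{kt_\eps(A)}<T}=f(k-1)\,\Prp{\Lambda'}{H_A<T}$, where $\Lambda'$ is the law of $X_{kt_\eps(A)}$ conditioned on $\{H_A>(k-1)t_\eps(A)\}$. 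Because a full interval of length $t_\eps(A)>T$ separates the conditioning time from the observation time, $\Lambda'$ is $\delta\eps$-close to $\pi$, and hence $(II)\leq 2\delta\eps\,f(k-1)$. This gives
\[
f(k+1)\geq f(k)(1-\eps-\delta\eps)-2\delta\eps\,f(k-1),
\]
and now the induction is self-sustaining: the Claim for $k-1$ gives $f(k)/f(k-1)\geq 1-\eps-5\delta\eps>1/2$, i.e.\ $f(k-1)\leq 2f(k)$, which immediately yields the Claim for $k$. No absolute lower bound on $f(k)$ is ever required, and no spectral or Perron--Frobenius machinery enters.
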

Notice that the claim and \eqnref{basecase} imply
\begin{eqnarray*}
\forall t\geq0\qquad \mathbf{P}_{\lambda} (H_A\geq t )&\geq& f
\bigl( \bigl\lceil t/t_\eps(A) \bigr\rceil \bigr)
\\
&\geq&(1-2\eps-r_\lambda) (1-\eps-5\delta\eps)^{\lceil
{t}/{t_\eps(A)}\rceil-1}
\\
&=&\bigl(1-O (\eps )-r_\lambda\bigr) (1-\eps-5\delta\eps)^{
{t}/{t_\eps(A)}}
\\
&\geq&\bigl(1- O (\eps )-r_\lambda\bigr) e^{-(1+O (\delta ))
{\eps t}/{t_\eps(A)}},
\end{eqnarray*}
which is precisely the bound we wish to prove. We spend the rest of
this proof proving the claim.

Fix some $k\geq1$, and notice that
%
%
\begin{eqnarray}\label{eq:twotermsrarelower}
f(k+1)&\geq& \mathbf{P}_{\lambda} \bigl(H_A\geq
kt_\eps (A), H_A\circ\Theta_{kt_\eps(A)+T}\geq
t_\eps(A)-T \bigr)
\nonumber\\
& &{}- \mathbf{P}_{\lambda} \bigl(H_A\geq
kt_\eps(A), H_A\circ\Theta _{kt_\eps (A)}<T \bigr)
\\
&=:& (I) - (\mathit{II}).\nonumber
\end{eqnarray}
We bound the two terms $(I)$, $(\mathit{II})$ separately. By the Markov property,
\[
(I)= \mathbf{P}_{\lambda} \bigl(H_A\geq kt_\eps(A)
\bigr)\mathbf {P}_{\Lambda} \bigl(H_A\geq t_\eps(A)
\bigr),
\]
where $\Lambda$ is the conditional law of $X_{kt_\eps(A)+T}$ given
$H_A\geq kt_\eps(A)$. Since $T=t_{\mathrm{mix}}^Q(\eps\delta)$,
$\Lambda$ is
within distance $\delta\eps$ from $\pi$. We deduce
%
%
\begin{equation}
\label{eq:firsttermrarelower}\quad (I)\geq\mathbf{P}_{\lambda}
\bigl(H_A\geq kt_\eps(A) \bigr) \bigl(
\mathbf{P}_{\pi} \bigl(H_A\geq t_\eps(A) \bigr)-
\delta\eps \bigr) = f(k) (1-\eps-\delta\eps).
\end{equation}
We now upper bound term $(\mathit{II})$ in \eqnref{twotermsrarelower}. Notice
that (again because of the Markov property)
\[
(\mathit{II})\leq\mathbf{P}_{\pi} \bigl(H_A\geq(k-1)t_\eps(A),
H_A\circ \Theta_{kt_\eps(A)}<T \bigr) = f(k-1)
\mathbf{P}_{\Lambda'} (H_A< T ),
\]
where $\Lambda'$ is the law of $X_{kt_\eps(A)}$ conditioned on $\{
H_A\geq(k-1)t_\eps(A)\}$. Recalling that $t_\eps(A)\geq T =
t_{\mathrm{mix}}
^Q(\delta\eps)$, we see that $\Lambda'$ is $\delta\eps$-close to
$\pi$.
Since we have also assumed that $\mathbf{P}_{\pi} (H_A\leq
T )\leq\delta\eps$,
we deduce
\[
(\mathit{II})\leq f(k-1) \bigl(\mathbf{P}_{\pi} (H_A< T )+\delta
\eps \bigr)\leq2\delta\eps f(k-1).
\]
We combine this with \eqnref{firsttermrarelower} and \eqnref
{twotermsrarelower} to obtain
\[
\forall k\in\N\setminus\{0,1\} f(k+1)\geq f(k) (1-\eps- \delta \eps) - f(k-1)
(2\delta\eps).
\]
One can argue inductively that $f(k)/f(k-1)\geq1/2$ for all $k\geq1$.
Indeed, this holds for $k\geq2$ by the claim applied to $k-1$. For
$k=1$ we may use \eqnref{basecase} and the assumption on $2\eps
+r_\lambda$ to deduce the same result. Applying this to the previous
inequality, we obtain
\[
\forall k\in\N\setminus\{0\} f(k+1)\geq f(k) (1-\eps- 5\delta\eps),
\]
which finishes the proof of the claim and of the lemma.
\end{pf}

\subsection{Hitting times of a union of sets: Proofs}

We present the proof of \thmref{rareeventsII} below.
\begin{pf*}{Proof of \thmref{rareeventsII}}
There are three main steps in the proof, here outlined in a slightly
oversimplified way:
\begin{longlist}[(1)]
\item[(1)] We show that \thmref{rareeventsI} is applicable to the
hitting times of $A_1, \ldots,\break A_\ell$. In particular, this shows
that $\mathbf{P}_{\pi} (H_{A_i}\leq\eps m )\approx\eps$.
\item[(2)] We show that
\[
\mathbf{P}_{\pi} (H_A\leq\eps m ) \approx\sum
_{i=1}^\ell \mathbf{P}_{\pi}
(H_{A_i}\leq\eps m )\approx\ell\eps,
\]
so that $t_{\ell\eps}(A)\approx\eps m$.
\item[(3)] Finally, we apply \thmref{rareeventsI} to $H_A$ and deduce
that this random variable is approximately exponential with mean
\[
\mathbf{E}_{\pi} [H_A ] \approx t_{\eps\ell}(A)/\eps
\ell\approx m/ \ell.
\]
\end{longlist}

The actual proof is only slightly more complicated than this outline.
We begin with a claim corresponding to step $1$ above.
%
%
\begin{claim}\label{claim:quantilesaregood}For all $1\leq i\leq\ell$,
\[
\eps_i\equiv\mathbf{P}_{\pi} (H_{A_i}\leq\eps m
)=\bigl(1+O (\delta )\bigr) \eps.
\]
\end{claim}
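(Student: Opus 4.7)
My approach is to apply Theorem~\ref{thm:rareeventsI} to each hitting time $H_{A_i}$, but at perturbed quantile arguments. The main obstacle, which I want to flag at the outset, is that a direct substitution of the exponential approximation $\Lawp{\pi}{H_{A_i}}=\Expdist(m,\bigoh{\eps},\bigoh{\delta})$ into $\Prp{\pi}{H_{A_i}\leq \eps m}=1-\Prp{\pi}{H_{A_i}>\eps m}$ only produces error of order $\bigoh{\eps}$: the prefactor $(1\pm\bigoh{\eps})$ in the tail bound contributes a non-negligible additive $\bigoh{\eps}$ term, which is much weaker than the $\bigoh{\delta\eps}$ precision demanded by the Claim. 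To get around this I exploit the quantile statement in Theorem~\ref{thm:rareeventsI}, namely $t_\eps(A)=(1+\bigoh{\delta})\,\eps\,\Exp{\pi}{H_A}$, where the multiplicative error is only $\bigoh{\delta}$.

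First, I would check that Theorem~\ref{thm:rareeventsI} is applicable to each $A_i$ at a whole range of quantile parameters $\eps'\in[\eps/2,2\eps]$. The hypothesis of Theorem~\ref{thm:rareeventsII} gives $\Prp{\pi}{H_{A_i}\leq \Tmix^Q(\delta\eps/2)}\leq \delta\eps/2$, and since $\Tmix^Q(\cdot)$ is non-increasing, for any such $\eps'$ we have $\delta\eps'\geq \delta\eps/2$, hence $\Tmix^Q(\delta\eps')\leq \Tmix^Q(\delta\eps/2)$ and
$$\Prp{\pi}{H_{A_i}\leq \Tmix^Q(\delta\eps')}\leq \delta\eps/2\leq \delta\eps'.$$
The side condition $0<\eps'<\delta<1/5$ also holds, since $\eps'\leq 2\eps<\delta/\ell\leq \delta$ (the last inequality uses $\ell\geq 1$). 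So Theorem~\ref{thm:rareeventsI} applies with parameter $\eps'$ and produces $t_{\eps'}(A_i)=(1+\bigoh{\delta})\,\eps'\,m$, where the $\bigoh{\delta}$ constant is universal (independent of $i$ and $\eps'$).

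Let $K$ be this universal constant and set $\eps_{\pm}\equiv (1\pm 2K\delta)\,\eps$; for $\delta$ smaller than a universal threshold these lie in $[\eps/2,2\eps]$ and the previous step applies. Then
$$t_{\eps_+}(A_i)\geq (1+2K\delta)(1-K\delta)\,\eps m>\eps m,\qquad t_{\eps_-}(A_i)\leq (1-2K\delta)(1+K\delta)\,\eps m<\eps m,$$
again for $\delta$ small. Since Theorem~\ref{thm:rareeventsI} records that $t\mapsto\Prp{\pi}{H_{A_i}\leq t}$ is continuous and strictly increasing, monotonicity of the CDF gives
$$\eps_-=\Prp{\pi}{H_{A_i}\leq t_{\eps_-}(A_i)}\;\leq\; \eps_i\;\leq\; \Prp{\pi}{H_{A_i}\leq t_{\eps_+}(A_i)}=\eps_+,$$
so $\eps_i\in [(1-2K\delta)\,\eps,(1+2K\delta)\,\eps]=(1+\bigoh{\delta})\,\eps$, which is the Claim. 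The regime where $\delta$ is not small is absorbed into the constant of $\bigoh{\delta}$ via Notational convention~\ref{not:bigoh}, since $\eps_i\in[0,1]$ makes the bound trivial there.
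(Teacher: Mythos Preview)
Your proof is correct and follows essentially the same approach as the paper: both apply \thmref{rareeventsI} to each $A_i$ over a range $\eps'\in[\eps/2,2\eps]$, extract the quantile relation $t_{\eps'}(A_i)=(1+\bigoh{\delta})\,\eps' m$, and then sandwich $\eps m$ between $t_{\eps_-}(A_i)$ and $t_{\eps_+}(A_i)$ for suitably perturbed $\eps_\pm=(1\pm c\delta)\eps$ to conclude via monotonicity of the distribution function. Your write-up is slightly more explicit in verifying the side condition $\eps'<\delta$ and in handling the small-$\delta$ threshold, but the argument is the same.
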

\begin{pf}
Consider some $\eps'\in[\eps/2,2\eps
]$. Notice that
\[
t_{\mathrm{mix}}^Q \bigl(\delta\eps' \bigr)\leq
t_{\mathrm{mix}}^Q( \delta\eps/2),
\]
and therefore,
\[
\mathbf{P}_{\pi} \bigl(H_{A_i}\leq t_{\mathrm{mix}}^Q
\bigl(\delta \eps' \bigr) \bigr)\leq\frac{\delta\eps
}{2}\leq\delta
\eps'.
\]
This shows that \thmref{rareeventsI} is applicable with $A_i$ replacing
$A$ and $\eps'$ replacing $\eps$. We deduce in particular that
\[
\forall\frac{\eps}{2}\leq\eps'\leq2\eps\qquad \biggl\llvert
\frac{\eps
'\mathbf{E}_{\pi } [H_{A_i} ]}{t_{\eps'}(A_i)}-1 \biggr\rrvert\leq O \bigl(\delta+\eps' \bigr) =
O (\delta ).
\]
In particular, there exists a universal constant $c>0$ such that if
$\eps'\leq(1-c\delta) \eps$, then
$t_{\eps'}(A_i)<\eps\mathbf{E}_{\pi} [H_{A_i} ]$,
whereas if
$\eps'>(1+c\delta) \eps$,
$t_{\eps'}(A_i)>\eps\mathbf{E}_{\pi} [H_{A_i} ]$. In
other words,
\[
(1-c\delta) \eps\leq\mathbf{P}_{\pi} \bigl(H_{A_i}\leq\eps
\mathbf{E}_{\pi} [H_{A_i} ] \bigr)\leq(1+c\delta)\eps.
\]
\upqed\end{pf}
We now come to the second part of the proof.
%
%
\begin{claim}\label{claim:etaishere}Let $\xi$ be as in the statement of
\thmref{rareeventsII}. Then
\[
\mathbf{P}_{\pi} (H_A\leq\eps m ) = \bigl(1+O (\delta +
\xi )\bigr) \ell\eps.
\]
In particular, there exists a number $\eta=(1+O (\delta+\xi
))\ell
\eps$ with
$\eps m = t_\eta(A)$.
\end{claim}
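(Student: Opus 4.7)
The plan is to use inclusion–exclusion truncated at the second level: the upper bound on $\Prp{\pi}{H_A\leq \eps m}$ is just the union bound, while the lower bound is Bonferroni's inequality, $\Prp{\pi}{H_A\leq\eps m}\geq \sum_i \Prp{\pi}{H_{A_i}\leq\eps m} - \sum_{i<j}\Prp{\pi}{H_{A_i}\leq \eps m,\,H_{A_j}\leq\eps m}$. The pointwise estimate from \claimref{quantilesaregood} controls each singleton probability, and the definition of $\xi$ is exactly tailored to control the pairwise sum.

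More concretely, first I would invoke \claimref{quantilesaregood} to write $\sum_{i=1}^\ell \Prp{\pi}{H_{A_i}\leq \eps m} = \ell\eps_i$-type sum $= (1+\bigoh{\delta})\,\ell\eps$. This gives the upper bound $\Prp{\pi}{H_A\leq\eps m}\leq (1+\bigoh{\delta})\,\ell\eps$ directly. Then I would apply Bonferroni together with the identity
\[
\sum_{1\leq i<j\leq \ell}\Prp{\pi}{H_{A_i}\leq \eps m,\,H_{A_j}\leq \eps m} = \xi\,\ell\eps,
\]
which is just the definition of $\xi$ in the theorem statement. Combining these gives $\Prp{\pi}{H_A\leq \eps m}\geq (1+\bigoh{\delta})\,\ell\eps - \xi\,\ell\eps = (1+\bigoh{\delta+\xi})\,\ell\eps$, matching the upper bound.

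For the ``in particular'' statement, I would observe that by the preliminary remarks in \thmref{rareeventsI}, the map $t\mapsto \Prp{\pi}{H_A\leq t}$ is continuous and strictly increasing on $[0,\infty)$, so its inverse $\eta \mapsto t_\eta(A)$ is well defined. Then simply taking $\eta := \Prp{\pi}{H_A\leq \eps m}$ yields $\eps m = t_\eta(A)$, and by what we just proved, $\eta = (1+\bigoh{\delta+\xi})\,\ell\eps$, as required.

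The only potentially subtle step is verifying that the $\bigoh{\cdot}$ accounting is consistent, in particular that $\ell\eps$ and $\xi$ are small enough for the $\bigoh{\delta+\xi}$ absorption to make sense under the Notational convention \ref{not:bigoh}; this is guaranteed by the standing hypothesis $\eps<\delta/2\ell$ in \thmref{rareeventsII}, which forces $\ell\eps<\delta/2<1/10$, and the implicit convention that $\xi$ must also be small (otherwise the conclusion is vacuous). No real obstacle is expected beyond this bookkeeping.
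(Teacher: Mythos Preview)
Your proposal is correct and follows essentially the same approach as the paper: the upper bound via the union bound plus \claimref{quantilesaregood}, the lower bound via Bonferroni's inequality with the pairwise sum rewritten as $\xi\,\ell\eps$, and the ``in particular'' by setting $\eta:=\Prp{\pi}{H_A\leq \eps m}$ and using continuity/monotonicity of $t\mapsto\Prp{\pi}{H_A\leq t}$. The paper's proof is identical in substance, only slightly more terse.
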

\begin{pf}To see this, we note that
\[
\{H_A\leq\eps m\} = \bigcup_{i=1}^\ell
\{H_{A_i}\leq\eps m\}.
\]
The union bound gives
\[
\mathbf{P}_{\pi} (H_A\leq\eps m )\leq\sum
_{i=1}^\ell \mathbf{P}_{\pi}
(H_{A_i}\leq\eps m )\leq\bigl(1+O (\delta )\bigr) \ell\eps.
\]
A lower bound can be obtained via the Bonferroni inequality,
\begin{eqnarray*}
\mathbf{P}_{\pi} (H_A\leq\eps m )&\geq&\sum
_{i=1}^\ell \mathbf{P}_{\pi}
(H_{A_i}\leq\eps m )
\\
& &{} - \sum_{1\leq i<j\leq\ell}\mathbf{P}_{\pi}
(H_{A_i}\leq \eps m,H_{A_j}\leq\eps m )
\\
&=& \bigl(1+O (\delta+\xi )\bigr) \ell\eps,
\end{eqnarray*}
using the definition of $\xi$.\vadjust{\goodbreak}
\end{pf}
We now need to show that
the assumptions of \thmref{rareeventsI} are applicable to~$H_A$,
with the value of $\eta$ in \claimref{etaishere} replacing $\eps$. We
assume that $\delta+\xi$ is small enough, which we may do because
otherwise the theorem is trivial. In particular, we can assume that the
$O (\xi+\delta )$ term in the expression for $\eta$ is between
$-1/2$ and $1$, so that
\[
\frac{\eps\ell}{2}\leq\eta\leq2\eps\ell.
\]
Since we also assumed $\eps<\delta/2\ell$, we have $\eta<\delta$.
Moreover, $t_{\mathrm{mix}}^Q(\delta\eta)\leq
t_{\mathrm{mix}}^Q(\delta\eps/2)$. This implies
\[
\mathbf{P}_{\pi} \bigl(H_A\leq t_{\mathrm{mix}}^Q(
\delta\eta ) \bigr)\leq\sum_{i=1}^\ell
\mathbf{P}_{\pi} \bigl(H_{A_i}\leq t_{\mathrm
{mix}}^Q(
\delta\eps/2) \bigr)\leq\ell\delta\eps/2\leq\delta \eta.
\]
We may now apply \thmref{rareeventsI} (with $\eta$ replacing $\eps$) to
deduce that for any $\lambda\in M_1(\bE)$,
\[
\operatorname{Law}_{\lambda} (H_A )=\Expdist
\bigl(t_\eta (A)/\eta, O (\eta )+2r_\lambda,O (\delta+\xi )
\bigr).
\]
To finish the proof, we note that $\eta=O (\ell\eps )$,
\[
t_\eta(A)/\eta= \eps m/\bigl(1+O (\delta+\xi )\bigr) \ell\eps=
\bigl(1+O (\delta+\xi )\bigr) \frac{m}{\ell}
\]
and apply \propref{changemean}.
\end{pf*}

\section{Meeting times of multiple random walks}\label{sec:meeting}

We now put our two exponential approximation results to use, showing
that the meeting times we are interested in are well approximated by
exponential random variables. Much of the work needed for this is
contained in technical estimates whose proofs can be safely skipped in
a first reading.

\subsection{Basic definitions}\label{sec:setup}

For the remainder of this section, $\bV$ is a finite set, and~$Q$ is
the generator of a mixing Markov chain over
$\bV$ with mixing times $t_{\mathrm{mix}}^Q(\cdot)$ and stationary measure
$\pi$. For each $k\in\N\setminus\{0,1\}$ we will also consider the
Markov chains $Q^{(k)}$ over $\bE^k$
that correspond to $k$ independent realizations of $Q$ from
prescribed initial states, as defined in \secref{productchains}. We
will also follow the notation from that section.

For $k=2$, we define the first meeting time
%
%
\begin{equation}
\label{eq:defM}M\equiv\inf \bigl\{t\geq0 \dvtx X_t(1)=X_t(2)
\bigr\}
\end{equation}
and the parameters
%
%
\begin{eqnarray}
\label{eq:defm}\mathsf{m}(Q)&\equiv& \mathbf{E}_{\pi^{\otimes
2}} [M ],
\\
\label{eq:defratio}\rho(Q)&\equiv& \frac{t_{\mathrm
{mix}}^Q}{\mathsf{m}(Q)}.
\end{eqnarray}

We also define an extra prameter $\mathsf{err}(Q)$ which will appear
as an error
term at several different points in the paper. Ths parameter $\mathsf
{err}(Q)$ is
defined as
%
%
\begin{equation}
\label{eq:deferr}\qquad \mathsf{err}(Q)=c_0 \sqrt{\rho(Q)\ln\bigl(1/\rho
(Q)\bigr)}\qquad\mbox{if $Q$ is reversible and transitive.}
\end{equation}
For other $Q$, we define it as
%
%
\begin{equation}
\label{eq:deferrtwo}\mathsf{err}(Q)= c_1 \sqrt{
\bigl(1+q_{\max
}t_{\mathrm{mix}} ^Q \bigr) \pi_{\max}
\ln \biggl(\frac{1}{(1+q_{\max}t_{\mathrm
{mix}}^Q) \pi
_{\max
}} \biggr)}.
\end{equation}
The numbers $c_0,c_1>0$ are universal constants that we do not specify
explicitly. We choose them so as to satisfy Propositions \ref
{prop:goodbound},~\ref{prop:goodboundgeneral} and \ref
{prop:correlgeneral} below.

We now take $k>2$ and consider the process $Q^{(k)}$, with trajectories
\[
\bigl(X_t^{(k)}= \bigl(X_t(1),X_t(2),
\ldots,X_t(k) \bigr) \bigr)_{t\geq0}
\]
corresponding to $k$ independent realizations of $Q$; cf. \secref
{productchains}. This has stationary
distribution $\pi^{\otimes k}$.

We write $M^{(k)}$ for the first meeting time among these random walks,
%
%
\begin{equation}
\label{eq:defM}M^{(k)}\equiv\inf \bigl\{t\geq0 \dvtx \exists1\leq i<j\leq k,
X_t(i)=X_t(j) \bigr\}.
\end{equation}
One may note that
\[
M^{(k)} = \min_{\{i,j\}\in({[k]\atop 2})}M_{i,j},
\]
where $\bigl({[k]\atop2}\bigr)$ was defined in \secref{prelim}, and for $1\leq
i<j\leq k$,
%
%
\begin{equation}
\label{eq:defMij}M_{i,j} = M_{j,i}\equiv\inf \bigl\{t\geq0 \dvtx X_t(i)=X_t(j) \bigr\}
\end{equation}
is distributed as $M$ for a realization of $Q^{(2)}$ starting from
$(X_0(i),X_0(j))$.

\subsection{Technical estimates for reversible and transitive chains}

In this subsection we collect the estimates that we will use in the
case of chains that are reversible and transitive.
%
%
\begin{proposition}\label{prop:goodbound}Assume $Q$ is reversible and
transitive and define $\mathsf{err}(Q)$ accordingly. If $\mathsf
{err}(Q)\leq1/4$, then
\[
\mathbf{P}_{\pi^{\otimes2}} \bigl(M\leq t_{\mathrm{mix}}^Q \bigl(
\mathsf{err}(Q)^2 \bigr) \bigr)\leq\mathsf{err}(Q)^2.
\]
\end{proposition}
%
%
\begin{remark} The proof is entirely general, but we will only use
this estimate in the transitive/reversible case.
\end{remark}
\begin{pf}We will prove a result in contrapositive form: if $0<\beta
<1/4$ is such that
\[
\mathbf{P}_{\pi^{\otimes 2}} \bigl(M\leq t_{\mathrm{mix}}^Q(\beta )
\bigr)>\beta,
\]
then $\beta<c^2_0\rho(Q)\ln(1/\rho(Q))$ for some universal $c_0>0$.\vadjust{\goodbreak}

Notice that for any $x^{(2)}\in\bV^2$,
\begin{eqnarray*}
\mathbf{P}_{x^{(2)}} \bigl(M>t_{\mathrm{mix}}^Q(\beta/4) +
t_{\mathrm{mix}}^Q( \beta) \bigr) &\leq&\mathbf{P}_{x^{(2)}}
\bigl(M\circ\Theta_{t_{\mathrm{mix}}^Q(\beta/4)}> t_{\mathrm{mix}} ^Q(\beta) \bigr)
\\
&\leq&\mathbf{P}_{\pi^{\otimes 2}} \bigl(M>t_{\mathrm
{mix}}^Q(\beta)
\bigr) +\beta/2
\\
&<& (1-\beta/2),
\end{eqnarray*}
where the middle inequality follows from the fact that $t_{\mathrm
{mix}}^Q(\beta
/4)$ is an upper bound for the $\beta/2$-mixing time of $Q^{(2)}$; cf.
\lemref{productchains}. A standard argument using the Markov property
implies that for any $k\in\N$,
\[
\mathbf{P}_{x^{(2)}} \bigl(M>k \bigl(t_{\mathrm{mix}}^Q(
\beta/4) + t_{\mathrm{mix}} ^Q(\beta) \bigr) \bigr)<(1-
\beta/2)^k,
\]
so that
\[
\mathsf{m}(Q)= \mathbf{E}_{\pi^{\otimes2}} [M ]\leq C \frac{t_{\mathrm{mix}}^{Q}(\beta)
+t_{\mathrm{mix}}
^{Q}(\beta/4)}{\beta}.
\]

Since $t_{\mathrm{mix}}^Q(\alpha)\leq C\ln(1/\alpha) t_{\mathrm
{mix}}^Q$, we deduce that
\[
\frac{\beta}{c \ln(1/\beta)}<\rho(Q),
\]
with $c>0$ universal, which implies the desired result.
\end{pf}

We now prove an estimate on correlations.

%
\begin{proposition}\label{prop:correltransitive}Assume $Q$ is
transitive. Then for all $t,s\geq0$ and $\{i,j\},\{\ell,r\}\subset
\bV
$ with $\{i,j\}\neq\{r,\ell\}$,
\[
\mathbf{P}_{\pi^{\otimes k}} (M_{i,j}\leq t, M_{\ell,r}\leq s )
\leq2 \mathbf{P}_{\pi^{\otimes2}} (M\leq s )\mathbf{P}_{\pi
^{\otimes2}} (M\leq t ).
\]
\end{proposition}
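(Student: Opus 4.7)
The plan is to split based on whether the pairs $\{i,j\}$ and $\{\ell,r\}$ are disjoint or share exactly one index. If they are disjoint, then $M_{i,j}$ and $M_{\ell,r}$ are measurable with respect to two disjoint (and hence independent) subfamilies of the $k$ walkers, so they are independent under $\pi^{\otimes k}$ and the bound holds with constant $1$ in place of $2$.

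Assume instead that the pairs share one index. After relabelling, take $\{i,j\}=\{1,2\}$ and $\{\ell,r\}=\{1,3\}$. Let $\tau=M_{1,2}$ and let $\tilde{M}$ denote the first meeting time of walkers $1$ and $3$ strictly after $\tau$; define $\tau'=M_{1,3}$ and $\tilde{M}'$ symmetrically. Decomposing on the sign of $M_{1,2}-M_{1,3}$ and noting that on $\{M_{1,2}\le M_{1,3}\}$ we have $M_{1,3}=\tau+\tilde{M}$,
\begin{align*}
\Prp{\pi^{\otimes k}}{M_{1,2}\le t,\,M_{1,3}\le s}
&\le \Prp{\pi^{\otimes k}}{\tau\le t,\,\tau+\tilde{M}\le s}\\
&\quad+ \Prp{\pi^{\otimes k}}{\tau'\le s,\,\tau'+\tilde{M}'\le t}.
\end{align*}

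To bound the first summand I would apply the strong Markov property at $\tau$ together with two observations. First, walker $3$'s position at $\tau$ is $\pi$-distributed and independent of $\sF^{(1,2)}_\tau$ (the $\sigma$-field generated by walkers $1$ and $2$ up to $\tau$), because walker $3$ is independent of walkers $1,2$, $\tau$ is $\sF^{(1,2)}_\infty$-measurable, and walker $3$ has $\pi$ as its one-dimensional marginal at every deterministic time by stationarity. Second, transitivity implies $\Prp{(z,\pi)}{M\le u}=\Prp{\pi^{\otimes 2}}{M\le u}$ for every $z\in \bV$ and $u\ge 0$: given $z,z'$, any symmetry $\sigma$ of $\bV$ with $\sigma(z)=z'$ preserves $Q$, preserves the uniform distribution $\pi$, and fixes the diagonal setwise, so the map $w\mapsto \sigma(w)$ gives a measure-preserving bijection on the event $\{M\le u\}$. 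Combining these yields $\Prp{\pi^{\otimes k}}{\tilde{M}\le s-\tau\mid \sF^{(1,2)}_\tau}\le \Prp{\pi^{\otimes 2}}{M\le s}$, and integrating against $\mathbb{1}\{\tau\le t\}$ bounds the first summand by $\Prp{\pi^{\otimes 2}}{M\le t}\,\Prp{\pi^{\otimes 2}}{M\le s}$. The second summand is handled identically, and summing produces the factor $2$.

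The main delicate point is the conditioning step: justifying that at the stopping time $\tau$ the state $X_\tau(3)$ is $\pi$-distributed \emph{conditionally} on $\sF^{(1,2)}_\tau$, rather than just marginally. This is what allows transitivity to cancel the dependence on $X_\tau(1)$ cleanly. The rest of the argument is essentially a bookkeeping exercise, and the factor $2$ is a genuine artifact of the need to handle both orderings of $M_{1,2}$ and $M_{1,3}$ separately rather than a loss in the estimates.
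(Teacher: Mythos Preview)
Your proof is correct and follows essentially the same route as the paper: the same disjoint/overlapping case split, the same decomposition according to which of $M_{1,2}$, $M_{1,3}$ occurs first (the paper writes this as $\{M_{1,2}\le t,\,M_{1,3}\circ\Theta_{M_{1,2}}\le s\}\cup\{M_{1,3}\le s,\,M_{1,2}\circ\Theta_{M_{1,3}}\le t\}$), the strong Markov property at the first meeting time, and stationarity/independence of walker $3$ at that time. The only cosmetic difference is that the paper uses transitivity to argue that the law of $X_{M_{1,2}}(1)$ itself is $\pi$, whereas you use it to show $\Prp{(z,\pi)}{M\le u}$ is independent of $z$; these are equivalent uses of the symmetry group.
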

\begin{pf}If $\{i,j\}\cap\{\ell,r\}=\varnothing$, the events $\{
M_{i,j}\leq t\}$ and $\{M_{\ell,r}\leq t\}$ are independent. Since the
laws of both $M_{i,j}$ and $M_{\ell,r}$ under $\pi^{\otimes k}$ are
equal to the law of $M$ under $\pi^{\otimes2}$, we obtain
\[
\mathbf{P}_{\pi^{\otimes k}} (M_{i,j}\leq t, M_{\ell,r}\leq s )
\leq \mathbf{P}_{\pi^{\otimes2}} (M\leq s )\mathbf{P}_{\pi
^{\otimes2}} (M\leq t )
\]
in this case.
Assume now $\{i,j\}\cap\{\ell,r\}$ has one element. Without loss of
generality we may assume $k=3$, $\{i,j\}=\{1,2\}$ and $\{\ell,r\}=\{
1,3\}$. We have
%
%
\begin{eqnarray}\label{eq:cortranS}
\mathbf{P}_{\pi^{\otimes3}} (M_{1,2}\leq t, M_{1,3}
\leq s )&\leq&\mathbf{P}_{\pi^{\otimes3}} (M_{1,2}\leq
t,M_{1,3}\circ \Theta_{M_{1,2}}\leq s )
\nonumber
\\[-8pt]
\\[-8pt]
\nonumber
& & {} + \mathbf{P}_{\pi^{\otimes 3}} (M_{1,3}\leq
s,M_{1,2}\circ\Theta_{M_{1,3}}\leq t ).
\end{eqnarray}
Consider the first term on the RHS. By the Markov property,
\[
\mathbf{P}_{\pi^{\otimes3}} (M_{1,2}\leq t,M_{1,3}\circ\Theta
_{M_{1,2}}\leq s ) = \mathbf{P}_{\pi^{\otimes2}} (M\leq t )
\mathbf{P}_{\lambda^{(2)}} (M\leq s ),
\]
where $\lambda^{(2)}$ is the law of $X_{M_{1,2}}(1),X_{M_{1,2}}(3)$
conditionally on $M_{1,2}\leq t$. Since $(X_t(3))_t$ is stationary and
independent from\vadjust{\goodbreak} this event, $\lambda^{(2)}=\lambda\otimes\pi$ for
some $\lambda\in M_1(\bV)$ which is the law of $X_{M_{1,2}}(1)$ under
$\mathbf{P}_{\pi^{\otimes2}}$. The transitivity of $Q$ (which
implies that
$\pi$ is uniform) implies that $\lambda=\pi$ and therefore
\[
\mathbf{P}_{\pi^{\otimes3}} (M_{1,2}\leq t,M_{1,3}\circ\Theta
_{M_{1,2}}\leq s ) = \mathbf{P}_{\pi^{\otimes2}} (M\leq t )
\mathbf{P}_{\pi^{\otimes2}} (M\leq s ).
\]
The same bound can be shown for the other term in the RHS of \eqnref
{cortranS}, and this implies the proposition.
\end{pf}

\subsection{Technical estimates for the general case}

We will need the following general result:

%
\begin{proposition}\label{prop:arbitrarymeeting}For any $\lambda\in
M_1(\bV)$ and $T\geq0$,
\[
\mathbf{P}_{\lambda\otimes\pi} (M\leq T )\leq(1+2T q_{\max})
\pi_{\max}.
\]
\end{proposition}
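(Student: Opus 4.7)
The plan is to split $\{M\leq T\}$ into the event that the walkers coincide at time $0$ and the event that a jump during $(0,T]$ carries the pair into the diagonal $\Delta=\{(x,x):x\in\bV\}$. The first is easy: since $X_0(1)\sim\lambda$ is independent of $X_0(2)\sim\pi$,
\begin{equation*}
\Prp{\lambda\otimes\pi}{X_0(1)=X_0(2)}=\sum_{x\in\bV}\lambda(x)\pi(x)\le \pi_{\max}.
\end{equation*}
This is meant to supply the ``$1$'' in the bound $(1+2Tq_{\max})\pi_{\max}$.

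For the second event, let $N_T$ be the number of times in $(0,T]$ at which the joint chain $(X_t(1),X_t(2))$ jumps from some state $(x,y)$ with $x\neq y$ into $\Delta$. Then $\{M\le T\}\subset\{X_0(1)=X_0(2)\}\cup\{N_T\ge 1\}$, so it suffices to show $\Ex{N_T}\le 2T\,q_{\max}\,\pi_{\max}$. By the Levy/Dynkin formula, $\Ex{N_T}$ equals the integral over $[0,T]$ of the instantaneous rate of jumps into $\Delta$ from $(X_t(1),X_t(2))$. Since the two walkers are independent and walker $2$ is stationary, the joint law at time $t$ factors as $\mu_t\otimes\pi$, where $\mu_t\equiv\Lawp{\lambda}{X_t}$. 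The rate from a state $(x,y)$ with $x\neq y$ into $\Delta$ is $q(x,y)+q(y,x)$, so
\begin{equation*}
\Ex{N_T}=\int_0^T\!\!\sum_{x\neq y}\mu_t(x)\,\pi(y)\bigl(q(x,y)+q(y,x)\bigr)\,dt = (\mathrm{I})+(\mathrm{II}).
\end{equation*}

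For $(\mathrm{I})$ I would bound $\sum_{y\neq x}\pi(y)q(x,y)\le \pi_{\max}\sum_{y\neq x}q(x,y)\le \pi_{\max}q_{\max}$ and then sum against $\mu_t$ to get $(\mathrm{I})\le T\,\pi_{\max}q_{\max}$. The only slightly delicate step is $(\mathrm{II})$: because walker $1$ is not stationary, one cannot just push $\pi_{\max}$ through $\mu_t$ via $q(y,x)$. Here the plan is to use the stationarity of $\pi$, i.e.\ $\sum_{y\neq x}\pi(y)q(y,x)=\pi(x)\sum_{y\neq x}q(x,y)\le \pi(x)q_{\max}\le \pi_{\max}q_{\max}$. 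Then
\begin{equation*}
\sum_{x\neq y}\mu_t(x)\pi(y)q(y,x)=\sum_x\mu_t(x)\sum_{y\neq x}\pi(y)q(y,x)\le \pi_{\max}q_{\max},
\end{equation*}
giving $(\mathrm{II})\le T\,\pi_{\max}q_{\max}$ after integrating. Adding the three contributions yields $\Prp{\lambda\otimes\pi}{M\le T}\le (1+2Tq_{\max})\pi_{\max}$, as required.

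The only non-routine point is $(\mathrm{II})$: this is where the asymmetry between $\lambda$ and $\pi$ matters, and where the stationarity of $\pi$ (balance of in- and out-flows) is essential to absorb the $q(y,x)$ factor into $\pi_{\max}q_{\max}$ without any control on $\max_x\mu_t(x)$.
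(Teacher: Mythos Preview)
Your proof is correct and takes a genuinely different route from the paper's. The paper uses a Poissonization/uniformization argument: it represents each walker via an independent rate-$q_{\max}$ Poisson clock whose arrivals trigger updates according to the discrete kernel $P=I+Q/q_{\max}$; conditionally on the two clocks, the joint law at every arrival time is $(\lambda P^{k_1})\otimes\pi$ (because $\pi$ is $P$-stationary), so each arrival contributes at most $\pi_{\max}$ to the meeting probability, and taking expectations over the clocks produces the factor $2Tq_{\max}$. You instead work directly with the generator: bound $\{M\le T\}$ by $\{X_0(1)=X_0(2)\}\cup\{N_T\ge 1\}$ and compute $\Ex{N_T}$ via the compensator of jumps into $\Delta$. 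Both arguments hinge on stationarity of $\pi$, but in different guises---the paper uses $\pi P=\pi$ to keep the second marginal equal to $\pi$ at every clock time, while you use the global balance identity $\sum_{y\neq x}\pi(y)q(y,x)=\pi(x)\sum_{y\neq x}q(x,y)$ to handle term $(\mathrm{II})$. Your approach is slightly more direct (no auxiliary coupling is introduced) and makes the role of stationarity transparent as a flow-balance statement; the paper's approach has the advantage of reducing to the elementary fact that at each discrete step the second coordinate is exactly $\pi$-distributed, which some readers may find more intuitive.
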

\begin{pf}Let $(X_t)_t$ be a single realization of $Q$. One may
imagine that the trajectory of $(X_t)_{t\geq0}$ is sampled as follows.
First, let $\sP$ be a Poisson process with intensity $q_{\max}$
independent from the initial state $X_0$. At each time $t\in\sP$, one
updates the value of $X_t$ as follows: if $X_s=x$ for $s$ immediately
before $t$, one sets
\[
X_t = y \mbox{ with probability}\qquad\frac{q(x,y)}{q_{\max}} \bigl(y\in \bV
\setminus\{x\} \bigr)
\]
and $X_t=x$ with the remaining probability. This implies that, at the
points of the Poisson process, $X_t$ is updated as in the discrete-time
Markov chain with matrix $P=(I + Q/q_{\max})$, and it is easy to see
that $\pi$ is stationary for this chain.

Now let $X_t(1),X_t(2)$ be independent trajectories of $Q$, with
$X_t(1)$ started from~$\lambda$ and $X_t(2)$ started from the
stationary distribution $\pi$. We will imagine that each $X_t(i)$ has
its own Poisson process $\sP(i)$ and was generated in the way described
above. It then follows that
\begin{eqnarray*}
\mathbf{P}_{\lambda\otimes\pi} \bigl(M\leq T\mid\sP(1),\sP (2) \bigr) &\leq&
\mathbf{P}_{\lambda\otimes\pi} \bigl(X_0(1)=X_0(2) \bigr)
\\
& & {}+ \sum_{t\in
\sP
}\mathbf{P}_{\lambda\otimes\pi}
\bigl(X_t(1)=X_t(2)\mid\sP(1),\sP (2) \bigr),
\end{eqnarray*}
where $\sP= \sP(1)\cup\sP(2)$, since the processes can only change
values at the times of the two Poisson processes. At time $0$, we have
\[
\mathbf{P}_{\lambda\otimes\pi} \bigl(X_0(1)=X_0(2) \bigr)
= \sum_{x\in\bV}\lambda(x)\pi(x)\leq\sum
_{x\in\bV}\lambda(x)\pi_{\max}\leq\pi_{\max}.
\]
For $t\in\sP$, the law of $X_t(1),X_t(2)$ equals
\[
\bigl(\lambda P^{k_1} \bigr)\otimes \bigl(\pi P^{k_2} \bigr),
\]
where $k_i=|\sP(i)\cap(0,t]|$ ($i=1,2$). Crucially, $\pi$ is
stationary for $P$, hence $\pi P^{k_2}=\pi$, and we obtain
\[
\mathbf{P}_{\lambda\otimes\pi} \bigl(X_t(1)=X_t(2)\mid
\sP(1), \sP (2) \bigr) = \sum_{x\in\bV
} \bigl(\lambda
P^{k_1} \bigr) (x)\pi(x)\leq\pi_{\max}\vadjust{\goodbreak}
\]
as for $t=0$. We deduce
\[
\mathbf{P}_{\lambda\otimes\pi} \bigl(M\leq T\mid\sP(1),\sP (2) \bigr) \leq\bigl(1 + \bigl|
\bigl(\sP(1)\cup \sP(2) \bigr)\cap(0,T]\bigr|\bigr) \pi_{\max}.
\]
The proposition follows from taking expectations on both sides and
noticing that
\[
\mathbf{E} \bigl[\bigl| \bigl(\sP(1)\cup\sP(2) \bigr)\cap(0,T]\bigr| \bigr] = 2T
q_{\max}.
\]
\upqed\end{pf}

We now prove an estimate corresponding to \propref{goodbound} in this
general setting.

%
\begin{proposition}\label{prop:goodboundgeneral}Assume $\mathsf
{err}(Q)$ is as
defined in \eqnref{deferrtwo}. Then
\[
\mathbf{P}_{\pi^{\otimes2}} \bigl(M\leq t_{\mathrm{mix}}^Q \bigl(
\mathsf{err}(Q)^2 \bigr) \bigr)\leq\mathsf{err}(Q)^2.
\]
\end{proposition}
\begin{pf}The previous proposition implies
\begin{eqnarray*}
\mathbf{P}_{\pi^{\otimes2}} \bigl(M\leq t_{\mathrm{mix}}^Q \bigl(
\mathsf{err}(Q)^2 \bigr) \bigr)&\leq& \bigl(1+2t_{\mathrm
{mix}}^Q
\bigl(\mathsf{err}(Q)\bigr) q_{\max} \bigr) \pi_{\max}
\\
&\leq&C \bigl(1+2t_{\mathrm{mix}}^Q q_{\max} \bigr)
\pi_{\max} \ln\bigl(1/\mathsf{err}(Q)\bigr).
\end{eqnarray*}
This is $\leq\mathsf{err}(Q)^2$ by definition of this quantity, if we choose
$c_1$ in \eqnref{deferrtwo} to be large enough.
\end{pf}
We now prove an estimate on correlations that is similar to \propref
{correltransitive}, but with an extra term. Recall that
$\bigl({[k]\atop2}\bigr)$ was defined in \secref{notation}.

%
\begin{proposition}\label{prop:correlgeneral}For any mixing Markov
chain $Q$, if one defines $\mathsf{err}(Q)$ as in \eqnref{deferrtwo},
we have
the following inequality for $k\geq3$ and all distinct pairs $\{i,j\},\{\ell,r\}\in\bigl({[k]\atop2}\bigr)$:
\[
\mathbf{P}_{\pi^{\otimes k}} (M_{i,j}\leq t,M_{\ell,r}\leq s )
\leq2 \mathbf{P}_{\pi^{\otimes2}} (M\leq t )\mathbf{P}_{\pi
^{\otimes2}} (M\leq s )+O
\bigl(\mathsf{err}(Q)^2 \bigr).
\]
\end{proposition}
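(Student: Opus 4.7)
\textbf{Proof plan for Proposition \ref{prop:correlgeneral}.}

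The plan is to follow the structure of the proof of \propref{correltransitive}, with the complication that $X_{M_{i,j}}(i)$ need not be distributed according to $\pi$ conditionally on $\{M_{i,j}\leq t\}$, since we no longer have transitivity to force $\pi$ to be uniform. We therefore pay an error term controlled by $\errtwo^2$ coming from (a) the pre-mixing meeting probability bounded via \propref{arbitrarymeeting}, and (b) the post-mixing total-variation gap used via \lemref{productchains}.

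First I would dispose of the disjoint case: if $\{i,j\}\cap\{\ell,r\}=\emptyset$, then the pair $(X_t(i),X_t(j))$ and the pair $(X_t(\ell),X_t(r))$ are driven by disjoint independent coordinates of $Q^{(k)}$, so $\Prp{\pi^{\otimes k}}{M_{i,j}\leq t,M_{\ell,r}\leq s}=\Prp{\pi^{\otimes 2}}{M\leq t}\Prp{\pi^{\otimes 2}}{M\leq s}$, which is even sharper than the claimed bound and needs no error term.

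For the overlap case (WLOG $k=3$, $\{i,j\}=\{1,2\}$, $\{\ell,r\}=\{1,3\}$), I would repeat the splitting used in \propref{correltransitive}:
\begin{eqnarray*}
\Prp{\pi^{\otimes 3}}{M_{1,2}\leq t,M_{1,3}\leq s} &\leq & \Prp{\pi^{\otimes 3}}{M_{1,2}\leq t,M_{1,3}\circ\Theta_{M_{1,2}}\leq s} \\
& & +\;\Prp{\pi^{\otimes 3}}{M_{1,3}\leq s,M_{1,2}\circ\Theta_{M_{1,3}}\leq t}.
\end{eqnarray*}
Applying the strong Markov property at the stopping time $M_{1,2}$, the first term factors as $\Prp{\pi^{\otimes 2}}{M\leq t}\cdot \Prp{\lambda\otimes\pi}{M\leq s}$, where $\lambda$ is the conditional law of $X_{M_{1,2}}(1)$ given $\{M_{1,2}\leq t\}$ and the marginal of coordinate $3$ remains $\pi$ by independence and stationarity. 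The key lemma I need is therefore
$$\forall \lambda\in M_1(\bV),\; \Prp{\lambda\otimes\pi}{M\leq s}\leq \Prp{\pi^{\otimes 2}}{M\leq s}+\bigoh{\errtwo^2},$$
uniformly in $s\geq 0$.

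To prove that lemma I would split on whether $s$ is before or after the mixing time $T:=\Tmix^Q(\errtwo^2)$. If $s\leq T$, \propref{arbitrarymeeting} gives $\Prp{\lambda\otimes\pi}{M\leq s}\leq (1+2s\,q_{\max})\pi_{\max}\leq (1+2Tq_{\max})\pi_{\max}$, which is $\bigoh{\errtwo^2}$ by exactly the calculation in the proof of \propref{goodboundgeneral}. If $s>T$, I would write
$$\Prp{\lambda\otimes\pi}{M\leq s}\leq \Prp{\lambda\otimes\pi}{M\leq T}+\Prp{\lambda\otimes\pi}{M\circ\Theta_T\leq s-T},$$
bound the first summand by $\bigoh{\errtwo^2}$ as above, and use the Markov property plus \lemref{productchains} on the second: the law of $X_T^{(2)}$ starting from $\lambda\otimes\pi$ is within total variation $\errtwo^2$ of $\pi^{\otimes 2}$, so the second summand is at most $\Prp{\pi^{\otimes 2}}{M\leq s-T}+\errtwo^2\leq \Prp{\pi^{\otimes 2}}{M\leq s}+\errtwo^2$. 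Applying this to the first term of the split gives $\Prp{\pi^{\otimes 2}}{M\leq t}\Prp{\pi^{\otimes 2}}{M\leq s}+\bigoh{\errtwo^2}$ (absorbing the factor $\Prp{\pi^{\otimes 2}}{M\leq t}\leq 1$ into the error), and the symmetric bound for the second term completes the proof.

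The only real obstacle is the non-stationary marginal $\lambda$, which is handled precisely by the before/after-mixing split above; the pre-mixing regime is exactly where \propref{arbitrarymeeting} is tailored to apply, and the post-mixing regime costs only a TV term of order $\errtwo^2$, so both contributions are absorbed in the advertised error.
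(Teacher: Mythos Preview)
Your proposal is correct and follows essentially the same approach as the paper: the same initial split \eqnref{cortranS}, followed by a before/after--mixing decomposition that uses \propref{arbitrarymeeting} to control the pre-mixing contribution and a total-variation argument for the post-mixing contribution. The only cosmetic difference is that the paper introduces an auxiliary parameter $\eta$ (optimized at the end to $\eta_0=(1+q_{\max}\Tmix^Q)\pi_{\max}$) rather than fixing $T=\Tmix^Q(\errtwo^2)$ up front and packaging the estimate as your standalone lemma $\Prp{\lambda\otimes\pi}{M\leq s}\leq \Prp{\pi^{\otimes 2}}{M\leq s}+\bigoh{\errtwo^2}$; one small remark is that you do not actually need \lemref{productchains} for the TV step, since the second coordinate already starts from $\pi$ and hence $d_{\rm TV}(\Lawp{\lambda\otimes\pi}{X_T^{(2)}},\pi^{\otimes 2})=d_{\rm TV}(\Lawp{\lambda}{X_T},\pi)\leq \errtwo^2$ directly from the definition of $\Tmix^Q(\errtwo^2)$.
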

\begin{pf}The case $\{i,j\}\cap\{\ell,r\}=\varnothing$ follows as in
the proof of \propref{correltransitive}. In case $\{i,j\}\cap\{\ell,r\}
$ has one element, we may again assume that $i=\ell=1$, $j=2$ and
$k=3$. Equation \eqnref{cortranS} still applies, so we proceed to bound
\[
\mathbf{P}_{\pi^{\otimes3}} (M_{1,2}\leq t,M_{1,3}\circ\Theta
_{M_{1,2}}\leq s ),
\]
which is upper bounded by
%
%
\begin{eqnarray*}
&&\mathbf{P}_{\pi^{\otimes3}} (M_{1,2}\leq t,M_{1,3}
\circ \Theta _{M_{1,2}}\leq s )
\\
&&\qquad\leq\mathbf{P}_{\pi^{\otimes 3}} \bigl(M_{1,2}\leq t,M_{1,3}
\circ \Theta_{M_{1,2}}\leq t_{\mathrm{mix}}^Q(\eta) \bigr)
\\
&&\qquad\quad{}+ \mathbf{P}_{\pi^{\otimes3}} (M_{1,2}\leq t,M_{1,3}\circ
\Theta _{M_{1,2}+t_{\mathrm{mix}} ^Q(\eta)}\leq s ) =(I)+(\mathit{II})
\end{eqnarray*}
for some $\eta\in(0,1/4)$ to be chosen later.\vadjust{\goodbreak}

Term $(I)$ is equal to
\[
\mathbf{P}_{\pi^{\otimes3}} (M_{1,2}\leq t )\mathbf {P}_{\lambda^{(2)}}
\bigl(M\leq t_{\mathrm{mix}}^Q(\eta) \bigr),
\]
where $\lambda^{(2)}$ is the law of $(X_{M_{1,2}}(2),X_{M_{1,2}}(3))$
conditionally on $\{M_{1,2}\leq t\}$. As in the previous proof,
$(X_t(3))_t$ is stationary and independent from the conditioning, hence
$\lambda^{(2)}=\lambda\otimes\pi$ for some $\lambda\in M_1(\bV)$. We
use \propref{arbitrarymeeting} to deduce
\[
(I)\leq\mathbf{P}_{\pi^{\otimes3}} (M_{1,2}\leq t ) O \bigl(
\bigl(1+t_{\mathrm{mix}} ^Q(\eta) q_{\max} \bigr) \pi
_{\max} \bigr).
\]
The analysis of term $(\mathit{II})$ is simpler: we have
\[
(\mathit{II})=\mathbf{P}_{\pi^{\otimes3}} (M_{1,2}\leq t )\mathbf
{P}_{\lambda_*\otimes\pi } (M\leq s )
\]
for some $\lambda_*\in M_1(\bV)$ which is the law of
$X_{M_{1,2}+t_{\mathrm{mix}}
^Q(\eta)}$ conditionally on $\{M_{1,2}\leq t\}$. The time shift by
$t_{\mathrm{mix}}^Q(\eta)$ implies that $\lambda_*$ is $\eta$-close to
stationary, hence
\[
(\mathit{II})\leq\mathbf{P}_{\pi^{\otimes3}} (M_{1,2}\leq t ) \bigl(\eta+
\mathbf{P}_{\pi ^{\otimes 2}} (M\leq s ) \bigr).
\]
We deduce that
\begin{eqnarray*}
&&\mathbf{P}_{\pi^{\otimes3}} (M_{1,2}\leq t,M_{1,3}\circ\Theta
_{M_{1,2}}\leq s )
\\
&&\qquad\leq\mathbf{P}_{\pi^{\otimes2}} (M\leq t ) \mathbf {P}_{\pi^{\otimes2}} (M\leq s )
\\
&&\qquad\quad{}+ \mathbf{P}_{\pi^{\otimes2}} (M\leq t ) O \bigl(\eta+ \bigl(1+t_{\mathrm{mix}}^Q(
\eta) q_{\max} \bigr) \pi_{\max
} \bigr).
\end{eqnarray*}
Recall $t_{\mathrm{mix}}^Q(\eta)\leq C t_{\mathrm{mix}}^Q \ln
(1/\eta)$ for some universal
$C>0$. If
\[
\eta_0 \equiv \bigl(1+q_{\max} t_{\mathrm{mix}}^Q
\bigr) \pi_{\max}\leq1/2,
\]
we may take $\eta=\eta_0$ to obtain
\begin{eqnarray*}
\mathbf{P}_{\pi^{\otimes3}} (M_{1,2}\leq t,M_{1,3}\circ\Theta
_{M_{1,2}}\leq s )& \leq& \mathbf{P}_{\pi^{\otimes2}} (M\leq t )
\mathbf{P}_{\pi ^{\otimes2}} (M\leq s )
\\
& &{}+ \mathbf{P}_{\pi^{\otimes2}} (M\leq t ) O \biggl(\eta \ln \biggl(
\frac{1}{\eta} \biggr) \biggr).
\end{eqnarray*}
The case of $\eta_0\geq1/2$ is covered ``automatically'' by the big-oh
notation.

An analogous bound can be obtained with the roles of $(t,2)$ and
$(s,3)$ reversed. Plugging these into \eqnref{cortranS} gives the
desired bound.
\end{pf}

\subsection{Exponential approximation for a pair of particles}\label
{sec:pairofparticles}

We now come back to the setting of \secref{setup} and show $M$ is
approximately exponentially distributed.

%
\begin{lemma}\label{lem:firstmeetingtime}Define $\mathsf{err}(Q)$ as
in \eqnref
{deferr} (if $Q$ is reversible and transitive) or as in \eqnref
{deferrtwo} (if not). Then $\forall\lambda^{(2)}\in M_1(\bV^{(2)})$
\[
\operatorname{Law}_{\lambda^{(2)}} (M ) = \Expdist \bigl(\mathsf{m}(Q),O \bigl(
\mathsf{err}(Q) \bigr) + 2r\lambda ^{(2)},O \bigl(\mathsf{err}(Q) \bigr)
\bigr),
\]
where
\[
r_{\lambda^{(2)}}=\mathbf{P}_{\lambda^{(2)}} \bigl(M\leq t_{\mathrm
{mix}}^Q
\bigl( \mathsf{err}(Q)^2 \bigr) \bigr).
\]
\end{lemma}

\begin{pf}This is a direct application of \thmref{rareeventsI} to
the hitting time of the diagonal set
\[
\Delta\equiv \bigl\{(x,x) \dvtx x\in\bV \bigr\}\subset\bV^2
\]
by the chain with generator $Q^{(2)}$ defined in \secref{prelim} and
with $\eps=\mathsf{err}(Q)$, $\delta=2\mathsf{err}(Q)$. All we need
to show is that
\[
\mathbf{P}_{\pi^{\otimes2}} \bigl(M\leq t_{\mathrm
{mix}}^{Q^{(2)}}(\delta
\eps) \bigr)\leq\eps\delta,
\]
where $t_{\mathrm{mix}}^{Q^{(2)}}(\cdot)$ denotes the mixing times of
$Q^{(2)}$.
This inequality follows from
\[
t_{\mathrm{mix}}^{Q^{(2)}} \bigl(2\mathsf{err}(Q)^2 \bigr)\leq
t_{\mathrm{mix}}^Q \bigl( \mathsf{err}(Q)^2 \bigr)\mbox{
\qquad(\lemref{productchains})}
\]
and
\[
\mathbf{P}_{\pi^{\otimes2}} \bigl(M\leq t_{\mathrm{mix}}^Q \bigl(
\mathsf{err}(Q)^2 \bigr) \bigr)\leq\mathsf{err}(Q)^2<
\delta\eps,
\]
which follows from \propref{goodbound} in the reversible/transitive
case and Proposition~\ref{prop:goodboundgeneral} in the general case.
\end{pf}

\subsection{Exponential approximation for many random walkers}\label
{sec:severalparticles}

We now consider the more complex problem of bounding the meeting times
among $k\geq2$ particles. We take the notation in \secref{setup} for granted.

%
\begin{lemma}\label{lem:firstmeetingtimemultiple}Let $\ell=
\bigl({k\atop2}\bigr)>0$, and assume that the quantity $\mathsf{err}(Q)$
defined in
\eqnref
{deferr} (if $Q$ is reversible and transitive) or as in \eqnref
{deferrtwo} (if not) satisfies $\mathsf{err}(Q)\leq1/10\ell$. Then
for all
$\lambda^{(k)}\in M_1(\bV^k)$,
\[
\operatorname{Law}_{\lambda^{(k)}} \bigl(M^{(k)} \bigr) = \Expdist
\biggl( \frac{\mathsf{m}(Q)}{\ell
},O \bigl(k^2\mathsf{err}(Q) \bigr) +
2r_{\lambda^{(k)}}, O \bigl(k^2\mathsf{err}(Q) \bigr) \biggr),
\]
where $r_{\lambda^{(k)}} = \mathbf{P}_{\lambda^{(k)}}
(M^{(k)}\leq t_{\mathrm{mix}} ^Q(\mathsf{err}(Q)^2) )$.
\end{lemma}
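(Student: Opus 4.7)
The plan is to apply \thmref{rareeventsII} directly to the product chain $Q^{(k)}$. Observe that $M^{(k)}$ is precisely the hitting time of the set
$$A = \bigcup_{\{i,j\}\in\binom{[k]}{2}} A_{\{i,j\}}, \qquad A_{\{i,j\}} \equiv \{x^{(k)}\in\bV^k\,:\,x(i)=x(j)\},$$
for the chain $Q^{(k)}$ with stationary distribution $\pi^{\otimes k}$. By translation invariance of the coordinates, under $\pi^{\otimes k}$ each $H_{A_{\{i,j\}}}=M_{i,j}$ has the same law as $M$ under $\pi^{\otimes 2}$, so in particular all the means equal $\mean$. Thus the theorem is in principle applicable with $\ell=\binom{k}{2}$ and $m=\mean$.

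Next, I would choose the parameters $\eps=\err$ and $\delta=C_0 k^{2}\err$ with $C_0>0$ a sufficiently large universal constant (so that $\eps<\delta/2\ell$ holds). To verify the small-hitting-time hypothesis, I combine \lemref{productchains}, which gives $\Tmix^{Q^{(k)}}(\delta\eps/2)\leq \Tmix^Q(\delta\eps/(2k))$, with the choice of $\delta,\eps$ to ensure $\delta\eps/(2k)\geq \err^{2}$; then \propref{goodbound} (transitive/reversible case) or \propref{goodboundgeneral} (general case) gives
$$\Prp{\pi^{\otimes k}}{M_{i,j}\leq \Tmix^{Q^{(k)}}(\delta\eps/2)} \leq \Prp{\pi^{\otimes 2}}{M\leq \Tmix^Q(\err^2)}\leq \err^{2}\leq \delta\eps/2,$$
as required. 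The assumption $\err\leq 1/(10\ell)$ guarantees $\delta<1/5$ for a suitable $C_0$.

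The core of the work is controlling the correlation parameter $\xi$ in \thmref{rareeventsII}. First, \lemref{firstmeetingtime} applied with $\lambda^{(2)}=\pi^{\otimes 2}$ yields
$$\Prp{\pi^{\otimes 2}}{M\leq \eps\mean}\leq 1-(1-\bigoh{\err})e^{-\eps/(1-\bigoh{\err})}=\bigoh{\eps}=\bigoh{\err}.$$
Feeding this into \propref{correltransitive} (resp.\ \propref{correlgeneral}) shows that for every distinct pair of pairs $I_1,I_2\in\binom{[k]}{2}$,
$$\Prp{\pi^{\otimes k}}{M_{I_1}\leq \eps\mean,\,M_{I_2}\leq \eps\mean}\leq 2\bigl(\Prp{\pi^{\otimes 2}}{M\leq \eps\mean}\bigr)^2+\bigoh{\err^{2}}=\bigoh{\err^{2}}.$$
Summing the $\binom{\ell}{2}=\bigoh{k^{4}}$ such terms and dividing by $\ell\eps=\bigoh{k^{2}\err}$ gives $\xi=\bigoh{k^{2}\err}$, and hence $\bigoh{\delta+\xi}=\bigoh{k^{2}\err}$. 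Similarly, $\bigoh{\ell\eps}=\bigoh{k^{2}\err}$.

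Finally, I need to reconcile the $r_\lambda$ appearing in the conclusion of \thmref{rareeventsII}, namely $\Prp{\lambda^{(k)}}{M^{(k)}\leq \Tmix^{Q^{(k)}}(\delta\eps)}$, with $r_{\lambda^{(k)}}=\Prp{\lambda^{(k)}}{M^{(k)}\leq \Tmix^Q(\err^{2})}$ in the Lemma. Since $\Tmix^{Q^{(k)}}(\delta\eps)\leq \Tmix^Q(\delta\eps/k)\leq \Tmix^Q(\err^{2})$ (the last step being ensured by our choice of $\eps,\delta$ and monotonicity of $\Tmix^Q(\cdot)$ in its argument), the event in the former is contained in the event in the latter, so the $2r_\lambda$ term is bounded by $2r_{\lambda^{(k)}}$. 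Invoking \thmref{rareeventsII} then yields $\Lawp{\lambda^{(k)}}{M^{(k)}}=\Expdist(\mean/\ell,\bigoh{k^{2}\err}+2r_{\lambda^{(k)}},\bigoh{k^{2}\err})$, which is the claim. The main obstacle is really just the careful bookkeeping of mixing-time inequalities to guarantee the comparison $\Tmix^{Q^{(k)}}(\delta\eps/2)\leq \Tmix^Q(\err^{2})$ throughout; the combinatorial estimate on $\xi$ is then essentially forced by the pairwise correlation bound.
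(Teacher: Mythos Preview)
Your proposal is correct and follows essentially the same route as the paper: apply \thmref{rareeventsII} to the product chain $Q^{(k)}$ with $\eps=\err$ and $\delta$ of order $\ell\err$, verify the small-hitting hypothesis via \lemref{productchains} together with \propref{goodbound}/\propref{goodboundgeneral}, and bound $\xi=\bigoh{k^2\err}$ using the pairwise correlation estimates of Propositions~\ref{prop:correltransitive}/\ref{prop:correlgeneral}. The only cosmetic differences are that the paper takes $\delta=2\ell\err$ exactly and cites \claimref{quantilesaregood} (rather than \lemref{firstmeetingtime}) for $\Prp{\pi^{\otimes 2}}{M\leq \eps\mean}=\bigoh{\eps}$, and that you make explicit the comparison $\Tmix^{Q^{(k)}}(\delta\eps)\leq \Tmix^Q(\err^2)$ needed to pass from the $r_\lambda$ of \thmref{rareeventsII} to the $r_{\lambda^{(k)}}$ of the Lemma---a point the paper leaves implicit.
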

\begin{pf}$M^{(k)}$ is the hitting time of a union of $\ell$ sets:
\[
\Delta^{(k)} \equiv\bigcup_{\{i,j\}\in({k\atop 2})}
\Delta_{\{i,j\}
}\qquad\mbox{where }\Delta_{\{i,j\}}\equiv \bigl
\{x^{(k)}\in\bV^k \dvtx x^{(k)}(i)=x^{(k)}(j)
\bigr\}.
\]
We will apply \thmref{rareeventsII}, applied to the product chain
$Q^{(k)}$, to show that this hitting time is approximately exponential.
We set $\delta=2\ell\mathsf{err}(Q)$, $\eps=\mathsf{err}(Q)$ and
verify the conditions
of the theorem:\vadjust{\goodbreak}
\begin{itemize}
\item$0<\delta<1/5$, $0<\eps<\delta/2\ell$: These conditions follow
from $\mathsf{err}(Q)<1/10\ell$.
\item$\mathbf{P}_{\pi^{\otimes k}} (M_{i,j}\leq t_{\mathrm
{mix}}^{Q^{(k)}}(\delta\eps /2) )\leq\delta\eps/2$. To prove
this we simply observe that
\[
t_{\mathrm{mix}}^{Q^{(k)}}(\delta\eps/2)\leq t_{\mathrm
{mix}}^{Q}
\bigl( \mathsf{err}(Q)^2 \bigr)\qquad\mbox{(\lemref{productchains} and
defn. of $\eps, \delta$)}
\]
and that
\[
\mathbf{P}_{\pi^{\otimes2}} \bigl(M\leq t_{\mathrm{mix}}^{Q} \bigl(
\mathsf{err}(Q)^2 \bigr) \bigr)\leq\mathsf{err}(Q)^2 =
\frac
{\delta\eps}{2\ell}\leq\frac{\delta\eps}{2}
\]
by \propref{goodbound} (in the reversible/transitive case) or by
\propref{goodboundgeneral} (in general).
\item$\mathbf{E}_{\pi^{\otimes k}} [H_{\Delta_{\{i,j\}}}
]=\mathsf{m}(Q)$ is the same
for all $\{i,j\}\in\bigl({[k]\atop2}\bigr)$: this is obvious.
\end{itemize}
The lemma will then follow once we show that the $\xi$ quantity in
\thmref{rareeventsII}, which in this case equals
\[
\xi= \sum_{\{i,j\}\neq\{\ell,r\}\ \mathrm{in}\ ({[k]\atop
2})}\frac
{\mathbf{P}_{\pi^{\otimes k}} (M_{\{i,j\}}\leq\eps\mathsf
{m}(Q), M_{\{\ell,r\} }\leq \eps\mathsf{m}(Q) )}{\ell\eps},
\]
and satisfies $\xi=O (k^2\mathsf{err}(Q) ).$ To start, we
go back to
\claimref
{quantilesaregood} in the proof of \thmref{rareeventsII} and observe
that whenever the assumptions of that theorem hold,
%
%
\begin{equation}
\label{eq:quantilehere}\mathbf{P}_{\pi^{\otimes k}} \bigl(M_{\{
i,j\} }\leq
\eps\mathsf{m}(Q) \bigr)= O (\eps ).
\end{equation}
Now note that Propositions~\ref{prop:correltransitive} (in the reversible/transitive
case) and~\ref{prop:correlgeneral} (in the general case) imply that each
term in the sum defining $\xi$ is $O (\mathsf{err}(Q)^2
)$. We deduce
\[
\xi\leq\frac{O (\eps^2 ) \bigl({\ell\atop2}\bigr)}{\ell\eps
}\leq O (\ell\eps ) = O \bigl(k^2
\mathsf{err}(Q) \bigr).
\]
\upqed\end{pf}

\section{Coalescing random walks: Basics}\label{sec:CRW}

In this section we formally define the coalesing random walks process.
We then show that if the initial number of particles is not large, mean
field behavior follows from the exponential approximation of meeting times.
\subsection{Definitions}

Fix a Markov chain $Q$ on a finite state space $\bV$. Given a number
$k\in[|\bV|]\setminus\{1\}$ and an initial state $x^{(k)}\in\bV^k$,
consider a realization of $Q^{(k)}$
\[
\bigl(X^{(k)} \bigr)_{t\geq0}\equiv \bigl(X_t(1),
\ldots,X_t(k) \bigr)_{t\geq0}.
\]
We build the coalescing random walks process from $X^{(k)}$ by defining
the trajectories of the $k$ walkers one by one. We first set
\[
\overline{X}_t(1) = X_t(1),\qquad t\geq0.
\]
Given $j\in[k]\setminus\{1\}$, assume that $\overline{X}_t(i)$ has
been defined for all $1\leq i<j$ and $t\geq0$. We let $T_j$ be the
first time $t\geq0$ at which $X_t(j)=\overline{X}_t(I_j)$ for some
$1\leq I_j<i$, and then set
\[
\overline{X}_t(j)\equiv\cases{ %
X_t(j),&\quad $t<T_j;$
\vspace*{2pt}\cr
\overline{X}_t(I_j), &\quad $t\geq T_j.$}
\]
Intuitively, this says that as soon as $j$ encounters a walker with
lower index, it starts moving along with it. The process
\[
\bigl(\overline{X}^{(k)}_t \bigr)_{t\geq0}\equiv
\bigl(\overline{X}_t(j) \bigr)_{t\geq0}
\]
is what we call the \emph{coalescing random walks process} based on $Q$,
with initial state~$x^{(k)}$.

%
\begin{remark} For any $j\geq3$, there might be more than one index
$i<j$ such that $\overline{X}_{T_j}(i)=X_{T_j}(j)$. However, it is easy
to see that all such $i$ will have the same trajectory after time $T_j$
because they must have met by that time. This implies that there is no
ambiguity in the definition of $\overline{X}_t(j)$ for any $j$.
\end{remark}

We also define
\[
\Ctime_i\equiv\inf \bigl\{t\geq0 \dvtx \bigl| \bigl\{\overline{X}_t(j)
\dvtx j\in[k] \bigr\} \bigr|\leq i \bigr\}
\]
and $\Ctime\equiv\Ctime_1$. The fact that we are working in continuous
time implies the following:

%
\begin{proposition}[(Proof omitted)]Assume that the initial state
$x^{(k)}=(x(1),x(2),\ldots,x(k))$ is such that $x(i)\neq x(j)$ for all
$1\leq i<j\leq k$. Then $\Ctime_k=0<\Ctime_{k-1}<\Ctime_{k-2}<\cdots
<\Ctime_1$ almost surely.
\end{proposition}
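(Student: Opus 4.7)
The plan is to reduce the proposition to the standard fact that independent continuous-time Markov chains on a finite state space almost surely never jump simultaneously, and to combine this with a careful reading of the iterative definition of $\overline{X}(\cdot)$.

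First, $\Ctime_k = 0$ is immediate from the hypothesis: at $t=0$ the cluster set $\{\overline{X}_0(j):j\in[k]\}=\{x(j):j\in[k]\}$ has cardinality exactly $k$ because the $x(i)$ are pairwise distinct. For the inequality $0<\Ctime_{k-1}$, I would invoke the positivity of the holding time of the product chain $Q^{(k)}$ at the state $x^{(k)}$: there exists a random $\tau>0$ such that no coordinate of $X^{(k)}$ jumps in $[0,\tau)$, hence on that interval every $\overline{X}_t(j)$ still equals $x(j)$ and the cluster count remains equal to $k$.

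For the strict descending chain $\Ctime_{k-1}<\Ctime_{k-2}<\dots<\Ctime_1$, the key observation is that the cluster count can drop by at most one at each jump time of the product chain. Since $X(1),\dots,X(k)$ are independent continuous-time Markov chains, the union of their jump times is almost surely a discrete set whose elements are pairwise distinct; at every such time $t$ exactly one coordinate $X(m)$ changes. For each cluster $C\subset[k]$ (where a cluster is a maximal set of indices $j$ sharing a common value of $\overline{X}_{t^-}(j)$), let $j^\star(C)$ denote its smallest element: one checks by unfolding the recursion $\overline{X}(j)=\overline{X}(I_j)$ for $t\ge T_j$ that $j^\star(C)$ is necessarily unabsorbed at time $t^-$ and that $\overline{X}_{t^-}(j) = X_{t^-}(j^\star(C))$ for every $j\in C$. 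Therefore the jump of $X(m)$ moves at most one cluster (the one whose representative is $m$, if any) while leaving every other cluster's common position fixed. Since before the jump the clusters occupy pairwise distinct positions, the moved cluster can coincide with at most one other cluster after the jump, so the cluster count decreases by at most one. Since by definition of $\Ctime_i$ the count must strictly decrease at each $\Ctime_i$, it decreases by exactly one, giving $\Ctime_i<\Ctime_{i-1}$ for $i=k-1,\dots,1$.

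The only mild subtlety is checking that a jump of a non-representative walker $m$ does not change any $\overline{X}_t(j)$; this follows because in that case $m$ has already been absorbed, so $\overline{X}_t(m) = X_t(j^\star)$ where $j^\star\ne m$ is the representative of $m$'s cluster, and the jump of $X(m)$ has no effect on any $\overline{X}(\cdot)$. Once this bookkeeping is dispatched, no further argument is needed and the proposition follows.
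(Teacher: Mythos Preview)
The paper explicitly omits the proof of this proposition, so there is nothing to compare your argument against. Your proposal is correct and supplies precisely the kind of routine verification the authors chose to skip: the almost-sure non-simultaneity of jumps among independent continuous-time chains, together with the observation that each cluster is driven by its minimal-index (hence still unabsorbed) walker, forces the cluster count to drop by exactly one at each coalescence time. Your bookkeeping regarding representatives is accurate---in particular, the fact that the minimal index $j^\star(C)$ of a cluster $C$ cannot have been absorbed (else $I_{j^\star}<j^\star$ would lie in $C$) is the right way to see that a jump of an already-absorbed coordinate has no effect on any $\overline{X}_t(\cdot)$. One small point you leave implicit is that the cluster count is non-increasing in $t$; this follows because absorption is permanent, or equivalently because (as the paper notes just after this proposition) $\{\overline{X}_t(j):j\in[k]\}=\{X_t(j):j\in A_t\}$ and $|A_t|$ only decreases. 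With that in hand, your conclusion that the count drops by exactly one at each $\Ctime_i$ is justified.
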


It is sometimes useful to view the coalescing random walks process as a
process with \emph{killings}. Define a random $2^{[k]}$-valued process
$(A_t)_{t\geq0}$ as follows:
\begin{itemize}
\item$1\in A_t$ for all $t$;
\item proceeding recursively, for each $j\in[k]\setminus\{1\}$, we
have $j\in A_t$ if and only if $\tau_j>t$, where $\tau_j$ is the first
time $t$ at which $X_t(i)=X_t(j)$ for some $i<j$ with $i\in A_t$.
\end{itemize}

Intuitively, $A_t$ is the set of all walkers that are ``alive'' at time
$t\geq0$, and a walker dies at the first time it meets an alive walker
with smaller index. One may check that coalescing random walks is
equivalent to the killed process in the following sense.

%
\begin{proposition}[(Proof omitted)] We have $\tau_j=T_j$ for all $j\in
[k]\setminus\{1\}$. Moreover, for all $t\geq0$, we have
\[
\bigl\{X_t(j) \dvtx j\in A_t \bigr\} = \bigl\{
\overline{X}_t(j) \dvtx j\in[k] \bigr\}.
\]
Finally, for all $i\in[k-1],$
\[
\Ctime_i =\inf\bigl\{t\geq0 \dvtx |A_t|\leq i\bigr\}.
\]
\end{proposition}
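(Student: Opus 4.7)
The plan is to prove the three assertions in sequence, with the first done by a joint induction on the walker index $j$. I will carry along an auxiliary claim: for every $j\in[k]\setminus\{1\}$ one has $T_j=\tau_j$, and moreover $\overline{X}_t(i)=X_t(i)$ whenever $i\in A_t$. Note that walker $1$ is trivially alive forever and $\overline{X}_t(1)=X_t(1)$ by definition, which launches the induction.

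For the inductive step, suppose the auxiliary claim holds for all indices less than $j$. The key observation is that at any time $t$,
$$\{\overline{X}_t(i):i\in[j-1]\}=\{X_t(i):i\in A_t\cap[j-1]\},$$
because an alive index $i$ contributes $X_t(i)=\overline{X}_t(i)$ by induction, while a dead index $i\notin A_t$ inherits the trajectory of some smaller alive index through the recursive definition of $\overline{X}$, adding nothing new to the right-hand set. Consequently, $T_j$ (the first hitting time of $X_t(j)$ into the left-hand set) coincides with $\tau_j$ (the first hitting time into the right-hand set), and by the coalescing definition, $\overline{X}_t(j)=X_t(j)$ for $t<T_j=\tau_j$, completing the induction.

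Given the first claim, the middle identity follows at once. For $\supseteq$, note that $\overline{X}_t(j)=X_t(j)$ if $j\in A_t$, and otherwise $\overline{X}_t(j)=\overline{X}_t(I_j)$ for some $I_j<j$, which by descending recursion equals $X_t(i)$ for some $i\in A_t$. For $\subseteq$, the auxiliary claim gives $X_t(j)=\overline{X}_t(j)$ for each $j\in A_t$.

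For the third identity, I would observe that distinct alive indices occupy distinct vertices: if $j<j'$ both lie in $A_t$, then $X_s(j)\neq X_s(j')$ must hold for every $s\leq t$, since otherwise $\tau_{j'}\leq t$, contradicting $j'\in A_t$. Hence $|\{X_t(j):j\in A_t\}|=|A_t|$, and combining with the middle identity gives $|\{\overline{X}_t(j):j\in[k]\}|=|A_t|$, so the level sets $\{|\{\overline{X}_t(j):j\in[k]\}|\le i\}$ and $\{|A_t|\le i\}$ coincide, proving $\Ctime_i=\inf\{t\ge 0:|A_t|\le i\}$. I do not expect any serious obstacle here; the only delicate point is to align the two recursive constructions so that the joint induction is well-posed, and to make explicit why collapsed walkers never enlarge the set of occupied vertices beyond what the alive ones already contribute.
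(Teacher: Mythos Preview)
The paper explicitly omits the proof of this proposition, so there is no authorial argument to compare against. Your proposal is correct and is the natural way to carry this out: the joint induction on $j$ showing simultaneously that $T_j=\tau_j$ and that $\overline{X}_t(i)=X_t(i)$ for alive $i$ is exactly what is needed, and your key set identity $\{\overline{X}_t(i):i\in[j-1]\}=\{X_t(i):i\in A_t\cap[j-1]\}$ is the right bridge between the two recursive constructions. The only point I would make more explicit is that the recursion defining $A_t$ at level $j$ requires knowing $A_s\cap[j-1]$ for \emph{all} $s\geq 0$ (not just $s=t$), since $\tau_j$ is a first-passage time; your inductive hypothesis does supply this, but it is worth saying so. Your argument for the third assertion---that distinct alive walkers occupy distinct sites because a collision would have triggered a killing---is clean and correct, using implicitly that $A_s\supseteq A_t$ for $s\leq t$.
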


Recall that $M_{i,j}$ is the meeting time between walkers $i$ and $j$;
cf. \eqnref{defMij}. We have the following simple proposition:
%
%
\begin{proposition}[(Proof omitted)]\label{prop:incrementofC}Assume that
the initial state
\[
x^{(k)}= \bigl(x(1),x(2),\ldots,x(k) \bigr)
\]
is such that $x(i)\neq x(j)$ for all $1\leq i<j\leq k$. Then for each
$1\leq p\leq k-1$,
\[
\Ctime_p - \Ctime_{p+1} = \min_{\{i,j\}\subset A_{\Ctime
_{p+1}}}M_{i,j}
\circ\Theta_{\Ctime_{p+1}}.
\]
Moreover, each time $\Ctime_p$ equals $M_{i,j}$ for some $\{i,j\}\in
\bigl({[k]\atop2}\bigr)$.
\end{proposition}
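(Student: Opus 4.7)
The plan is to work with the killed-process characterization $\Ctime_p = \inf\{t\geq 0 : |A_t|\leq p\}$ supplied by the previous proposition, combined with the strict ordering $0 < \Ctime_{k-1} < \Ctime_{k-2} < \cdots < \Ctime_1$ from the proposition immediately before it. Strict monotonicity forces $|A_t|$ to decrease by exactly one at each $\Ctime_p$, since any simultaneous drop by two would collapse two consecutive coalescence times. Hence $|A_{\Ctime_{p+1}}| = p+1$ and $A_t = A_{\Ctime_{p+1}}$ is constant on the interval $[\Ctime_{p+1}, \Ctime_p)$, with the next coalescence occurring precisely when two walkers in this constant alive set meet.

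The key observation for the main identity is that if $i<j$ are two distinct indices in $A_{\Ctime_{p+1}}$, then $M_{i,j} > \Ctime_{p+1}$. Indeed, once a walker is killed it stays dead, so $i \in A_{\Ctime_{p+1}}$ implies $i \in A_t$ for all $t \leq \Ctime_{p+1}$. If we had $M_{i,j} \leq \Ctime_{p+1}$, then at time $M_{i,j}$ the walker $j$ would meet the alive walker $i<j$, forcing $\tau_j \leq M_{i,j} \leq \Ctime_{p+1}$ and contradicting $j \in A_{\Ctime_{p+1}}$. Therefore $M_{i,j}\circ\Theta_{\Ctime_{p+1}} = M_{i,j} - \Ctime_{p+1}$ for every such pair. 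Since $\Ctime_p$ is by construction the first time $t>\Ctime_{p+1}$ at which $X_t(i)=X_t(j)$ for some $\{i,j\}\subset A_{\Ctime_{p+1}}$, subtracting $\Ctime_{p+1}$ and minimizing over pairs yields exactly $\min_{\{i,j\}\subset A_{\Ctime_{p+1}}} M_{i,j}\circ\Theta_{\Ctime_{p+1}}$.

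For the second assertion, let $j$ be the walker killed at time $\Ctime_p$; by definition of the killed process there exists $i \in A_{\Ctime_p}$ with $i<j$ and $X_{\Ctime_p}(i) = X_{\Ctime_p}(j)$, so that $M_{i,j} \leq \Ctime_p$. Reapplying the same monotonicity argument, $i$ is alive at $\Ctime_p$ and hence at every earlier time, so $M_{i,j} < \Ctime_p$ would force $j$ to die strictly before $\Ctime_p$, contradicting $\tau_j = \Ctime_p$. Hence $\Ctime_p = M_{i,j}$, as claimed.

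The main obstacle — really the only subtle point — is the assertion that $|A_t|$ drops by exactly one at each $\Ctime_p$, which would fail if three walkers happened to coincide simultaneously. However, this is built into the strict inequalities of the preceding ``proof omitted'' proposition, which we invoke as a black box; the rest of the argument is a careful unpacking of the definitions of $A_t$, $\tau_j$, and $M_{i,j}\circ\Theta$, together with the monotonicity of killing.
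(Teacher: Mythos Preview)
Your proof is correct. The paper explicitly omits the proof of this proposition (it is marked ``Proof omitted''), so there is nothing to compare against; your argument is a clean and correct filling-in of the details the authors chose to leave out.
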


\subsection{Mean-field behavior for moderately large $k$}

We now prove a mean-field-like result for an initial number of
particles $k$ that is not too large, assuming that meeting times of up
to $k$ walkers satisfy our exponential approximation property.

%
\begin{lemma}\label{lem:inductiveargument}Assume that $Q$, $\mathsf
{err}(Q)$ and
$k$ satisfy the assumptions of \lemref{firstmeetingtimemultiple}. Let
$x^{(k)}\in\bV^k$. Then for all $p\in[k-1],$
\[
d_W \Biggl(\operatorname{Law}_{x^{(k)}} \biggl(
\frac{\Ctime
_p}{\mathsf{m}(Q)} \biggr),\operatorname{Law} \Biggl(\sum
_{i=p+1}^{k}\mathsf{Z}_{i} \Biggr) \Biggr)=
\frac{O (k^2\mathsf{err}(Q) ) +12 \eta(x^{(k)})}{p},
\]
where
\begin{eqnarray*}
\eta \bigl(x^{(k)} \bigr) &=& \mathbf{P}_{x^{(k)}}
\bigl(M^{(k)}\leq t_{\mathrm{mix}}^Q \bigl(
\mathsf{err}(Q)^2 \bigr) \bigr)
\\
&&{}+\mathbf{P}_{x^{(k)}} \pmatrix{
\exists\{i,j\},
\{\ell,r\} \in \pmatrix{[k]\cr 2}  \dvtx \vspace*{2pt}\cr
\{ \ell,r\}\neq\{i,j\}\mbox{ but } M_{i,j}\circ \Theta_{M_{\ell,r}}
\leq t_{\mathrm{mix}}^Q \bigl(\mathsf {err}(Q)^2 \bigr)
},
\end{eqnarray*}
and the $\mathsf{Z}_{i}$ are the random variables described in \eqnref{defZi}.
\end{lemma}

\begin{pf}Write $x^{(k)}=(x(1),\ldots,x(k))$. We will prove the
similar bound
%
%
\begin{eqnarray}
\label{eq:goaldistinct}&&\mbox{``}\forall1\leq i<j\leq k \dvtx x(i)\neq x(j)\mbox{''}
\nonumber\\
&&\qquad\Rightarrow d_W \Biggl(\operatorname{Law}_{x^{(k)}} \biggl(
\frac{\Ctime
_p}{\mathsf{m}(Q)} \biggr),\operatorname{Law} \Biggl(\sum
_{i=p+1}^{k}\mathsf{Z}_{i} \Biggr) \Biggr)\\
&&\qquad=
\frac{O (k^2\mathsf{err}(Q) )
+4 \eta(x^{(k)})}{p}.\nonumber
\end{eqnarray}
To see how this implies the general result, consider some $x^{(k)}$
such that some of its coordinates are equal, so that in particular
$\eta
(x^{(k)})\geq1$. One still has the trivial bound
\[
d_W \Biggl(\operatorname{Law}_{x^{(k)}} \biggl(
\frac{\Ctime
_p}{\mathsf{m}(Q)} \biggr),\operatorname{Law} \Biggl(\sum
_{i=p+1}^{k}\mathsf{Z}_{i} \Biggr) \Biggr)
\leq \mathbf{E}_{x^{(k)}} \biggl[\frac{\Ctime _p}{\mathsf{m}(Q)} \biggr] + \mathbf{E}
\Biggl[\sum_{i=p+1}^{k}\mathsf{Z}_{i}
\Biggr].
\]
The second term on the RHS is $\leq2/p$. For the first term, let $j$
be the number of distinct coordinates of $x$ and
\[
y^{(j)}= \bigl(y(1),\ldots,y(j) \bigr)\in\bV^j
\]
have distinct coordinates with
\[
\bigl\{y(1),\ldots,y(j) \bigr\} = \bigl\{x(1),\ldots,x(k) \bigr\}.
\]
Then clearly,
\[
\mathbf{E}_{x^{(k)}} \biggl[\frac{\Ctime_p}{\mathsf{m}(Q)} \biggr]=
\mathbf{E}_{y^{(j)}} \biggl[\frac{\Ctime _p}{\mathsf{m}(Q)} \biggr].
\]
If $p\geq j$, the RHS is $0$. If not, it can be upper bounded using the
bound in~\eqnref{goaldistinct},
\begin{eqnarray*}
\mathbf{E}_{y^{(j)}} \biggl[\frac{\Ctime_p}{\mathsf{m}(Q)} \biggr]&\leq&\mathbf{E}
\Biggl[\sum_{i=p+1}^{k}\mathsf{Z}_{i}
\Biggr] + d_W \Biggl( \operatorname{Law}_{y^{(j)}} \biggl(
\frac{\Ctime _p}{\mathsf{m}(Q)
} \biggr),\operatorname{Law} \Biggl(\sum
_{i=p+1}^{j}\mathsf {Z}_{i} \Biggr) \Biggr)
\\
&\leq&\frac{2+ 4\eta(y^{(j)})
+ O (k^2\mathsf{err}(Q) )}{p}.
\end{eqnarray*}
Since $\eta(x^{(k)})\geq1\geq\eta(y^{(j)})/2$ in this case, we obtain
\[
d_W \Biggl(\operatorname{Law}_{x^{(k)}} \biggl(
\frac{\Ctime
_p}{\mathsf{m}(Q)} \biggr),\operatorname{Law} \Biggl(\sum
_{i=p+1}^{k}\mathsf{Z}_{i} \Biggr) \Biggr)
\leq \frac{12 \eta(x^{(k)}) + O (k^2\mathsf{err}(Q) )}{p}
\]
for such $x^{(k)}$ with repetitions, which gives the lemma in general.

We prove \eqnref{goaldistinct} by reverse induction on $p$. The case
$p=k-1$ is trivial: $\Ctime_{k-1}$ is simply $M^{(k)}$, and $\eta
(x^{(k)})$ is an upper bound for $r_{\delta_{x^{(k)}}}$, so we may
apply \lemref{firstmeetingtimemultiple} to deduce the desired bound.

For the inductive step, consider $p_0<k-1$, and assume the result is
true for all $p_0<p\leq k-1$. We will use the easily proven fact that
$\Ctime_{p_0+1}$ is a stopping time for the process
$(X^{(k)}_t)_{t\geq
0}$ process. Consider the corresponding $\sigma$-field $\sF_{\Ctime
_{p_0+1}}$. We will apply \lemref{conditionalwasserstein} with
\begin{eqnarray*}
Z_1 &=& \sum_{i=p_0+2}^k
\mathsf{Z}_{i},
\\
Z_2&=& \mathsf{Z}_{p_0+1},
\\
W_1&=& \frac{\Ctime_{p_0+1}}{\mathsf{m}(Q)},
\\
W_2&=&\frac{\Ctime_{p_0}-\Ctime_{p_0+1}}{\mathsf{m}(Q)} = \frac
{\min_{\{i,j\}\subset A_{\Ctime_{p_0+1}}}M_{i,j}\circ\Theta
_{\Ctime_{p_0+1}}}{\mathsf{m}(Q)},
\\
\sG&=& \sF_{\Ctime_{p_0+1}}.
\end{eqnarray*}
(We used \propref{incrementofC} to obtain the second expression for
$W_2$ above.) Applying \lemref{conditionalwasserstein} in conjunction
with the induction hypothesis gives
%
%
\begin{eqnarray}
\label{eq:decompdW}&& d_W \Biggl(\operatorname{Law}_{x^{(k)}}
\biggl(\frac {\Ctime _{p_0}}{\mathsf{m}(Q)} \biggr),\sum_{i=p_0+1}^k
\mathsf{Z}_{i} \Biggr)\nonumber\\
&&\qquad\leq\frac{O (k^2\mathsf{err}(Q)  ) + 4\eta
(x^{(k)})}{p_0+1}
\nonumber
\\[-8pt]
\\[-8pt]
\nonumber
&&\qquad\quad{}+ \mathbf{E}_{x^{(k)}} \biggl[d_W \biggl(\operatorname
{Law}_{x^{(k)}} \biggl(\frac{\min_{\{i,j\}\subset A_{\Ctime
_{p_0+1}}}M_{i,j}\circ\Theta_{\Ctime_{p_0+1}}}{\mathsf
{m}(Q)}\Big\vert\sF_{C_{p_0+1}}
\biggr),\\
&&\hspace*{285pt}\mathsf{Z}_{p_0+1} \biggr) \biggr].\nonumber
\end{eqnarray}
Note that $A_{\Ctime_{p_0+1}}$ is $\sF_{\Ctime_{p_0+1}}$-measurable.
The strong Markov property for $Q^{(k)}$ implies that
\[
\operatorname{Law}_{x^{(k)}} \biggl(\frac{\min_{\{i,j\}\subset
A_{\Ctime _{p_0+1}}}M_{i,j}\circ\Theta_{\Ctime_{p_0+1}}}{\mathsf
{m}(Q)}\Big\vert
\sF_{C_{p_0+1}} \biggr)
\]
is the same as
\[
\operatorname{Law}_{X^{(k)}_{\Ctime_{p_0+1}}} \biggl(\frac{\min_{\{
i,j\}\subset A_{\Ctime _{p_0+1}}}M_{i,j}}{\mathsf{m}(Q)} \biggr).
\]
Now define $Y^{(p_0+1)}$ as the vectors whose coordinates are the
$p_0+1$ distinct points $X_{\Ctime_{p_0+1}}(i)$ with $i\in A_{p_0+1}$
(the order of the coordinates does not matter). Clearly,
%
%
\begin{equation}
\label{eq:choiceofYk}\operatorname{Law}_{X^{(k)}_{\Ctime
_{p_0+1}}} \biggl(
\frac{\min_{\{i,j\}\subset A_{\Ctime
_{p_0+1}}}M_{i,j}}{\mathsf{m}(Q) } \biggr)=\operatorname {Law}_{Y^{(p_0+1)}} \biggl(
\frac{M^{(p_0+1)}}{\mathsf{m}(Q)} \biggr).
\end{equation}
By \lemref{firstmeetingtimemultiple}, this last law is approximately
exponential,
\[
\Expdist \biggl(\frac{1}{\bigl({p_0+1 \atop2}\bigr)},O \bigl(k^2\mathsf {err}(Q)
\bigr)+2 r_{\delta
_{Y^{(p_0+1)}}},O \bigl(k^2 \mathsf{err}(Q) \bigr) \biggr),
\]
and \lemref{approxwasserstein} gives
\[
d_W \biggl(\operatorname{Law}_{Y^{(p_0+1)}} \biggl(
\frac
{M^{(p_0+1)}}{\mathsf{m}(Q)} \biggr),\mathsf{Z}_{p_0+1} \biggr)\leq
\frac{O (k^2\mathsf{err}(Q) ) + 4r_{\delta
_{Y^{(p_0+1)}}}}{p_0(p_0+1)}.
\]
Using the definition of $r_{\delta_{Y^{(p_0+1)}}}$, we obtain from
\eqnref{decompdW} the following inequality:
%
%
\begin{eqnarray}
\label{eq:decompdWback}&& d_W \Biggl(\operatorname{Law}_{x^{(k)}}
\biggl(\frac {\Ctime_{p_0}}{\mathsf{m}(Q)} \biggr),\sum_{i=p_0+1}^k
\mathsf{Z}_{i} \Biggr)\nonumber\\
&&\qquad\leq\frac
{O (k^2\mathsf{err}(Q) ) + 4\eta(x^{(k)})}{p_0+1}
\\
&&\qquad\quad{}+\frac{O (k^2\mathsf{err}(Q) ) +
4\mathbf{E}_{x^{(k)}} [\mathbf{P}_{{Y^{(p_0+1)}}}
(M^{(p_0+1)}\leq t_{\mathrm{mix}}^Q(\mathsf{err}(Q)^2) )
]}{p_0(p_0+1)}.\nonumber
\end{eqnarray}
To finish, we need to show that the expected value on the RHS is $\leq
\eta(x^{(k)})$. For this we recall \eqnref{choiceofYk} to note that
\begin{eqnarray*}
& & \mathbf{P}_{Y^{(p_0+1)}} \bigl(M^{(p_0+1)}\leq t_{\mathrm
{mix}}^Q
\bigl( \mathsf{err}(Q)^2 \bigr) \bigr)
\\
&&\qquad=\mathbf{P}_{X^{(k)}_{\Ctime_{p_0+1}}} \Bigl(\min_{\{i,j\}\subset
A_{\Ctime_{p_0+1}}}M_{i,j}
\leq t_{\mathrm{mix}}^Q \bigl(\mathsf {err}(Q)^2 \bigr)
\Bigr)
\\
&&\qquad= \mathbf{P}_{x^{(k)}} \bigl(\Ctime_{p_0}-
\Ctime_{p_0+1}\leq t_{\mathrm{mix}}^Q \bigl(
\mathsf{err}(Q)^2 \bigr)\mid\sF_{\Ctime
_{p_0+1}} \bigr),
\end{eqnarray*}
where the last line uses \propref{incrementofC} and the strong Markov
property. Averaging shows that the expectation on the RHS of \eqnref
{decompdWback} is
\[
\mathbf{P}_{x^{(k)}} \bigl(\Ctime_{p_0}-\Ctime_{p_0+1}
\leq t_{\mathrm{mix}}^Q \bigl(\mathsf{err}(Q)^2 \bigr)
\bigr),
\]
and \propref{incrementofC} implies that this is at most
\[
\mathbf{P}_{x^{(k)}} \biggl(\bigcup_{\{i,j\}\neq\{\ell,r\}}
\bigl\{ M_{i,j}\circ\Theta_{M_{\ell,r}}\leq t_{\mathrm{mix}}^Q
\bigl( \mathsf{err}(Q)^2 \bigr) \bigr\} \biggr).
\]
Since the RHS is $\leq\eta(x^{(k)})$, we are done.\
\end{pf}

\section{Proofs of the main theorems}\label{sec:proofsmain}

\subsection{The full coalescence time in the transitive case}\label{sec:thm:transitive_proof}

In this section we prove \thmref{transitive}.

\begin{pf*}{Proof of \thmref{transitive}} Recall that $\Ctime=\Ctime
_1$ by definition. \lemref{inductiveargument} gives the following bound
for any $k\leq\sqrt{1/4\mathsf{err}(Q)}\wedge|\bV|$ and
$x^{(k)}\in\bV^k$:
\[
d_W \Biggl(\operatorname{Law}_{x^{(k)}} \biggl(
\frac{\Ctime}{\mathsf
{m}(Q)} \biggr),\sum_{i=2}^{k}
\mathsf{Z}_{i} \Biggr)\leq12 \eta \bigl(x^{(k)} \bigr) + O
\bigl(k^2\mathsf{err}(Q) \bigr).
\]
Notice that
\[
d_W \Biggl(\sum_{i=2}^{k}
\mathsf{Z}_{i},\sum_{i=2}^{+\infty}
\mathsf{Z}_{i} \Biggr)\leq \mathbf{E} \biggl[\sum
_{j\geq k+1}\mathsf{Z}_{j} \biggr] = \frac{2}{k+1},
\]
hence
\[
d_W \Biggl(\operatorname{Law}_{x^{(k)}} \biggl(
\frac{\Ctime
_1}{\mathsf{m}(Q)} \biggr),\sum_{i=2}^{+\infty}
\mathsf{Z}_{i} \Biggr) =12 \eta \bigl(x^{(k)} \bigr) +O
\biggl(k^2\mathsf{err}(Q)+\frac{1}{k} \biggr).
\]
Convexity of $d_W$ implies

%
\begin{proposition}\label{prop:transitivemoderatekl}Under the
assumptions of \thmref{transitive}, the following holds for $\mathsf
{err}(Q)\leq
1/4$, $1\leq k\leq\sqrt{1/4\mathsf{err}(Q)}\wedge|\bV|$ and
$\lambda
^{(k)}\in
M_1(\bV^k)$:
%
%
\begin{eqnarray}
\label{eq:moderatekI}d_W \Biggl(\operatorname{Law}_{\lambda
^{(k)}}
\biggl(\frac {\Ctime_1}{\mathsf{m}(Q)} \biggr),\sum_{i=2}^{+\infty}
\mathsf{Z}_{i} \Biggr) &\leq& 12 \int\eta \bigl(x^{(k)} \bigr)
\,d\lambda^{(k)} \bigl(x^{(k)} \bigr)
\nonumber
\\[-8pt]
\\[-8pt]
\nonumber
& &{} + O \biggl(k^2\mathsf{err}(Q)+ \frac{1}{k}
\biggr).
\end{eqnarray}
\end{proposition}

Notice that our control of $\Ctime_1$ gets worse as $k$ increases, and
we cannot use the above bound to approximate the law of $\Ctime_1$
started with one particle at each vertex of $\bV$. What we use instead
is a truncation argument combined with the Sandwich lemma for $d_W$
(\lemref{sandwich} above). For this we need to find two random variables
\[
\Ctime_-\preceq_d\Ctime_1\preceq_d\Ctime_+
\]
such that both $\Ctime_-/\mathsf{m}(Q)$ and $\Ctime_+/\mathsf
{m}(Q)$ are close to
$\sum_{i=2}^{+\infty}\mathsf{Z}_{i}$. More specifically, we will
show that
\[
\label{eq:plusminus}d_W \biggl(\frac{\Ctime_\pm}{\mathsf{m}(Q)
},\sum
_{i\geq2}\mathsf{Z}_{i} \biggr) = O
\biggl(k^2 \mathsf{err}(Q)+ k^4\mathsf{err}(Q)^2+
\frac {1}{k+1} + \rho(Q)\ln\bigl(1/\rho(Q)\bigr) \biggr).
\]
Before we continue, let us show how this last bound implies our result.
The Sandwich \lemref{sandwich} gives
\[
d_W \biggl(\frac{\Ctime_1}{\mathsf{m}(Q)},\sum_{i\geq2}
\mathsf{Z}_{i} \biggr) = O \biggl(k^2 \mathsf{err}(Q)+
k^4\mathsf {err}(Q)^2+ \frac{1}{k} + \rho(Q)\ln
\frac{1}{\rho(Q)} \biggr).
\]
Since $\rho(Q)\ln(1/\rho(Q))=O (\mathsf{err}(Q) )$, we
may choose $k=(\mathsf{err}(Q)
)^{-1/3}$ [which works for $\mathsf{err}(Q)$ sufficiently small] to obtain
\[
d_W \biggl(\frac{\Ctime_1}{\mathsf{m}(Q)},\sum_{i\geq2}
\mathsf{Z}_{i} \biggr) = O \bigl(\mathsf{err}(Q)^{1/3}
\bigr),
\]
and this is precisely the bound we seek because
\[
\mathsf{err}(Q)=O \bigl(\sqrt{\rho(Q)\ln\bigl(1/\rho(Q)\bigr)} \bigr).
\]
We now construct $\Ctime_-,\Ctime_+$ and prove that they have the
required properties.

\textit{Construction of $\Ctime_-$}: pick $x(1),\ldots,x(k)\in\bV$ from
distribution $\pi$, independently and with replacement. Let $\Ctime_-$
denote the full coalescence time for $k$ walkers started from these
positions. This might be degenerate: there might be more than one
walker starting from some element of $\bV$, but this only means those
particles will coalesce instantly.

Clearly, $\Ctime_-\preceq_d\Ctime_1$. Moreover,
\[
\operatorname{Law} \biggl(\frac{\Ctime_-}{\mathsf{m}(Q)} \biggr)=\operatorname{Law}_{\pi^{\otimes k}}
\biggl(\frac{\Ctime
_1}{\mathsf{m}(Q)} \biggr).
\]
Therefore by \propref{transitivemoderatekl},
\begin{eqnarray*}
d_W \Biggl(\operatorname{Law} \biggl(\frac{\Ctime_-}{\mathsf
{m}(Q)} \biggr),
\sum_{i=2}^{+\infty}\mathsf{Z}_{i}
\Biggr)&=& d_W \Biggl( \operatorname{Law}_{\pi^{\otimes k}} \biggl(
\frac {\Ctime
_1}{\mathsf{m}(Q)} \biggr),\sum_{i=2}^{+\infty}
\mathsf{Z}_{i} \Biggr)
\\
&= & O \biggl(\int\eta \bigl(x^{(k)} \bigr) \,d\pi^{\otimes k}+
k^2 \mathsf{err}(Q)+ \frac{1}{k} \biggr).
\end{eqnarray*}
Notice that the integral on the RHS is at most
%
%
\begin{eqnarray}
\label{eq:integralGRU}\int\eta \bigl(x^{(k)} \bigr) \,d
\pi^{\otimes k}& \leq& \sum_{\{i,j\}\in({[k] \atop 2})}
\mathbf{P}_{\pi^{\otimes
k}} \bigl(M_{i,j}\leq t_{\mathrm{mix}}^Q
\bigl(\mathsf{err}(Q)^2 \bigr) \bigr)
\nonumber\\
& &{} + \sum_{\stackrel{\{i,j\},\{\ell,r\}\in({[k] \atop 2}) \dvtx }{\{
i,j\}
\neq\{\ell,r\}}}\mathbf{P}_{\pi^{\otimes k}}
\pmatrix{M_{i,j}\circ\Theta_{M_{\ell,r}}
\vspace*{2pt}\cr
\leq t_{\mathrm
{mix}}^Q \bigl(\mathsf{err}(Q)^2 \bigr)
}
\\
&=&O \bigl(k^4\mathsf{err}(Q)^2 \bigr)\nonumber
\end{eqnarray}
as can be deduced from the proofs of Propositions \ref
{prop:correltransitive} and~\ref{prop:goodbound}. We conclude that
%
%
\begin{eqnarray}
\label{eq:wassersteinlower}&& d_W \biggl(\operatorname{Law} \biggl(
\frac {\Ctime _-}{\mathsf{m}(Q)} \biggr), \operatorname{Law} \biggl(\sum
_{i\geq2}\mathsf{Z}_{i} \biggr) \biggr)
\nonumber
\\[-8pt]
\\[-8pt]
\nonumber
&&\qquad=O \biggl(k^4\mathsf{err}(Q)^2 +
k^2 \mathsf{err}(Q)+ \frac
{1}{k} \biggr).
\end{eqnarray}

\textit{Construction of $\Ctime_+$}: we will use the following simple
stochastic domination result, which we describe in the language of the
process with killings. Let $\tau\leq\sigma$ be stopping times for the
$X^{(k)}$ process. If all killings are suppressed between time~$\tau$
and $\sigma$, the resulting full coalescence time $\Ctime_+$
stochastically dominates $\Ctime_1$. We will use this result, whose
proof we omit, with the following choice of $\tau$ and $\sigma$:
\[
\tau= \Ctime_k\quad\mbox{and}\quad\sigma= \Ctime_k +
t_{\mathrm{mix}}^Q \bigl(\mathsf{err}(Q)^2 \bigr).
\]
\lemref{sumW} implies
\[
d_W \biggl(\frac{\Ctime_+}{\mathsf{m}(Q)},\frac{\Ctime_1\circ
\Theta
_\sigma}{\mathsf{m}(Q)
} \biggr)\leq
\frac{\mathbf{E} [\sigma ]}{\mathsf{m}(Q)} = \frac
{\mathbf{E} [\Ctime _k ]}{\mathsf{m}(Q)
}+\frac{t_{\mathrm{mix}}^Q(\mathsf{err}(Q)^2)}{\mathsf{m}(Q)}.
\]
Since $Q$ is transitive, $\mathsf{m}(Q)$ can be bounded from below in
terms of
the maximal hitting time in $Q$~\cite{AldousFill_RWBook}, Chapter 14.
Theorem 1.2 in~\cite{Oliveira_TAMS} implies
\[
\mathbf{E} [\Ctime_k ]\leq\frac{C \mathsf{m}(Q)}{k} + C
t_{\mathrm{mix}}^Q
\]
for some universal $C>0$. Recalling the definition of $\rho(Q)$ in
\eqnref{defratio}, we obtain
\[
\frac{\mathbf{E} [\Ctime_k ]}{\mathsf{m}(Q)} = O \biggl(\frac{1}{k} + \rho(Q) \biggr).
\]
Moreover, we also have
\[
t_{\mathrm{mix}}^Q \bigl(\mathsf{err}(Q)^2 \bigr)=O
\bigl(\ln \bigl(1/\mathsf{err}(Q)\bigr) t_{\mathrm{mix}}^Q \bigr)= O
\bigl(t_{\mathrm
{mix}}^Q\ln\bigl(1/\rho(Q)\bigr) \bigr),
\]
hence
\[
d_W \biggl(\frac{\Ctime_+}{\mathsf{m}(Q)},\frac{\Ctime_1\circ
\Theta
_\sigma}{\mathsf{m}(Q)
} \biggr)= O \biggl(
\frac{1}{k} + \rho(Q)\ln\bigl(1/\rho(Q)\bigr) \biggr).
\]
This shows
\[
d_W \Biggl(\frac{\Ctime_+}{\mathsf{m}(Q)},\sum_{i=2}^k
\mathsf{Z}_{i} \Biggr)= O \biggl(\frac {1}{k} + \rho(Q)\ln
\bigl(1/\rho (Q)\bigr) \biggr) + d_W \Biggl(\frac{\Ctime_1\circ
\Theta
_\sigma}{\mathsf{m}(Q)},\sum
_{i=2}^k \mathsf{Z}_{i}
\Biggr).
\]
Now consider the time $\Ctime_1\circ\Theta_\sigma$. Since all killings
were suppressed between times $\tau=\Ctime_k$ and $\sigma=\Ctime
_k+t_{\mathrm{mix}}
^Q(\mathsf{err}(Q)^2)$, there are $k$ alive particles at time $\sigma
_-$. Letting
$\lambda^{(k)}$ denote their law, we have
\[
\operatorname{Law} \biggl(\frac{\Ctime_1\circ\Theta_\sigma
}{\mathsf{m}(Q)} \biggr) = \operatorname{Law}_{\lambda^{(k)}}
\biggl(\frac{\Ctime_1}{\mathsf{m}(Q)} \biggr),
\]
and \propref{transitivemoderatekl} implies
\[
d_W \Biggl(\frac{\Ctime_1\circ\Theta_\sigma}{\mathsf{m}(Q)},\sum_{i=2}^k
\mathsf{Z}_{i} \Biggr)=O \biggl(\int\eta \bigl(x^{(k)} \bigr)
\,d \lambda^{(k)} \bigl(x^{(k)} \bigr) + k^2
\mathsf{err}(Q)+ \frac
{1}{k} \biggr).
\]
Now observe that
\[
t_{\mathrm{mix}}^Q \bigl(\mathsf{err}(Q)^2 \bigr)\geq
t_{\mathrm
{mix}}^{Q^{(k)}} \bigl(k \mathsf{err}(Q)^2 \bigr)
\qquad\mbox{(cf. \lemref{productchains})},
\]
hence the law of the $k$ particles at time $\Ctime_k+t_{\mathrm
{mix}}^Q(\mathsf{err}(Q)^2)$
is $k\mathsf{err}(Q)^2$-close to stationary, irrespective of their
states at time
$\Ctime_k$. We deduce that $\lambda^{(k)}$ is $k\mathsf
{err}(Q)^2$-close to
stationary, and
\[
d_W \Biggl(\frac{\Ctime_1\circ\Theta_\sigma}{\mathsf{m}(Q)},\sum_{i=2}^k
\mathsf{Z}_{i} \Biggr)= O \biggl(\int\eta \bigl(x^{(k)}
\bigr) \,d\pi ^{\otimes k} + k^2\mathsf{err}(Q)+ k
\mathsf{err}(Q)^2+ \frac
{1}{k} \biggr).
\]
The integral on the RHS was estimated in \eqnref{integralGRU}, and we deduce
\[
d_W \Biggl(\frac{\Ctime_1\circ\Theta_\sigma}{\mathsf{m}(Q)},\sum_{i=2}^k
\mathsf{Z}_{i} \Biggr)=O \biggl(k^2 \mathsf{err}(Q)+
k^4\mathsf {err}(Q)^2+ \frac{1}{k} \biggr),
\]
and we deduce
\[
d_W \Biggl(\frac{\Ctime_+}{\mathsf{m}(Q)},\sum_{i=2}^k
\mathsf{Z}_{i} \Biggr) = O \biggl(k^2 \mathsf{err}(Q)+
k^4\mathsf {err}(Q)^2+ \frac{1}{k} + \rho(Q)\ln
\bigl(1/\rho(Q)\bigr) \biggr).\qquad
\]
\upqed\end{pf*}

\subsection{The general setting}\label{sec:thm_general}

We now come to the proof of \thmref{general}.
\begin{pf*}{Proof of \thmref{general}} The proof is essentially the
same as in the reversible/transitive case, but with the definition of
$\mathsf{err}(Q)$ given in \eqnref{deferrtwo}. In particular, we can
still use the
same definition of $\Ctime_-$ used in that proof to obtain
%
%
\begin{equation}
\label{eq:Ctime-general}d_W \Biggl(\frac{\Ctime
_-}{\mathsf{m}(Q)
},\sum
_{i=2}^{+\infty}\mathsf{Z}_{i} \Biggr) = O
\biggl(k^2 \mathsf {err}(Q)+ k^4\mathsf{err}(Q)^2+
\frac{1}{k} \biggr).
\end{equation}

We will need a different strategy in the analysis of $\Ctime_+$, where
we need to bound $\mathbf{E} [\Ctime_k ]$ by different
means. Note that
$\Ctime
_k\geq t$ if and only if there exist distinct $y(1),\ldots,y(k)\in\bV$
such that there is \emph{no coalescence} among the walkers started from
these vertices. The probability of this ``no coalescence event'' for a
given choice of $y(i)$'s is $\mathbf{P}_{y^{(k)}} (M^{(k)}\geq
t )$ for
$y^{(k)}=(y(1),\ldots,y(k))$. Therefore,
\[
\mathbf{P} (\Ctime_k\geq t )\leq \biggl(\sum
_{y^{(k)}\in
\bV^k} \mathbf{P}_{y^{(k)}} \bigl(M^{(k)}\geq t
\bigr) \biggr)\wedge1.
\]
By \lemref{firstmeetingtimemultiple}, each term in the RHS satisfies
\[
\mathbf{P}_{y^{(k)}} \bigl(M^{(k)}\geq t \bigr)\leq C
e^{-{t
({k\atop2})}/{((1+O (k^2\mathsf{err}(Q) )) \mathsf{m}(Q))}}
\]
for some universal $C>0$. Since there are $\leq|\bV|^k$ terms in the
sum, we have
\[
\mathbf{P} (\Ctime_k\geq t )\leq \bigl(C |\bV|^k
e^{-{t ({k \atop
2})}/{((1+O (k^2\mathsf{err}(Q) )) \mathsf{m}(Q))}} \bigr)\wedge1.
\]
Integrating the RHS gives
\[
\frac{\mathbf{E} [\Ctime_k ]}{\mathsf{m}(Q)}\leq C \frac
{\ln|\bV|}{k}
\]
for a potentially different, but still universal $C$. Going through the
previous proof, we see that this gives
%
%
\begin{eqnarray}
\label{eq:Ctimeplusgeneral}&&d_W \Biggl(\frac{\Ctime
_+}{\mathsf{m}(Q)},\sum
_{i=2}^k\mathsf{Z}_{i}
\Biggr)
\nonumber
\\[-8pt]
\\[-8pt]
\nonumber
&&\qquad=O \biggl(k^2 \mathsf{err}(Q)+ k^4
\mathsf{err}(Q)^2+ \frac{\ln|\bV|}{k} + \frac{t_{\mathrm
{mix}}^Q(\mathsf{err}(Q)^2)}{\mathsf{m}(Q)} \biggr).
\end{eqnarray}
To continue, we bound the term containing $t_{\mathrm{mix}}^Q(\mathsf
{err}(Q)^2)$ in terms of
$\mathsf{err}(Q)$ [this was easier before because of the different
definition of
$\mathsf{err}(Q)$]. Recall from \propref{goodboundgeneral} that
\[
\mathbf{P}_{\pi^{\otimes2}} \bigl(M\leq t_{\mathrm{mix}}^Q \bigl(
\mathsf{err}(Q)^2 \bigr) \bigr)\leq\mathsf{err}(Q)^2.
\]
Therefore, for all $j\in\N$,
\begin{eqnarray*}
&& \mathbf{P}_{\pi^{\otimes2}} \bigl(M\leq j
t_{\mathrm{mix}}^Q \bigl(\mathsf{err}(Q)^2 \bigr) \bigr)
\\
&&\qquad\leq \sum_{i=1}^j\mathbf{P}_{\pi^{\otimes2}}
\bigl(M\circ\Theta _{(i-1)t_{\mathrm{mix}} ^Q(\mathsf{err}(Q) ^2)} t_{\mathrm
{mix}}^Q \bigl(
\mathsf{err}(Q)^2 \bigr) \bigr)
\\
&&\qquad\leq j\mathsf{err}(Q)^2.
\end{eqnarray*}
On the other hand, taking
\[
j= \biggl\lceil\frac{2\mathbf{E}_{\pi^{\otimes2}} [M
]}{t_{\mathrm{mix}}^Q(\mathsf{err}(Q)
^2)} \biggr\rceil,
\]
we obtain
\[
\mathbf{P}_{\pi^{\otimes2}} \bigl(M\leq j t_{\mathrm{mix}}^Q \bigl(
\mathsf{err}(Q)^2 \bigr) \bigr)\geq1- \frac{\mathbf{E}_{\pi
^{\otimes2}} [M ]}{j t_{\mathrm{mix}}^Q(\mathsf
{err}(Q)^2)}\geq
\frac{1}{2}.
\]
Combining these two inequalities gives
\[
\frac{t_{\mathrm{mix}}^Q(\mathsf{err}(Q)^2)}{\mathbf{E}_{\pi
^{\otimes2}} [M ]} = O \bigl(\mathsf{err}(Q)^2 \bigr).
\]
This implies that the term containing $t_{\mathrm{mix}}^Q(\mathsf
{err}(Q)^2)$ on the RHS of
\eqnref{Ctimeplusgeneral} can be neglected. Combining that equation
with \eqnref{Ctime-general} and the Sandwich \lemref{sandwich}, we obtain
\[
\label{eq:plusminus2}d_W \biggl(\frac{\Ctime_1}{\mathsf{m}(Q)
},\sum
_{i\geq2}\mathsf{Z}_{i} \biggr) = O
\biggl(k^2 \mathsf{err}(Q)+ k^4\mathsf{err}(Q)^2+
\frac {\ln|\bV|}{k} \biggr).
\]
We choose $k=\lceil(\ln|\bV|/\mathsf{err}(Q))^{1/3}\rceil$ to
finish the proof,
at least if this is smaller than $1/5\sqrt{\mathsf{err}(Q)}$. But the
bound in the
theorem is trivial if that is not the case, so we are done.
\end{pf*}

\section{Final remarks}

\begin{itemize}
\item Cooper et al.~\cite{CooperEtAl_IPSOnExpanders} consider many
other processes besides coalescing random walks. It is not hard to
modify our analysis to study those processes over more general graphs,
at least when the initial number of random walks is not too large (this
restriction is also present in~\cite{CooperEtAl_IPSOnExpanders}).
\item Our Theorems~\ref{thm:rareeventsI} and~\ref{thm:rareeventsII} can
be used to study other problems related to hitting times. Alan Prata
and the present author~\cite{Prata_Tese} have used these results to
prove the Gumbel law for the fluctuations of cover times for a large
family of graphs, including all examples where it was previously known.
We have also used extensions of these results to compute the asymptotic
distribution of the $k$ last points to be visited, for any constant
$k$: those are uniformly distributed over the graph, as conjectured by
Aldous and Fill~\cite{AldousFill_RWBook}.
\end{itemize}

\begin{appendix}\label{sec:proof_W}
\section*{Appendix: Proofs of techncal results on $L_1$ Wasserstein
distance}

\subsection{\texorpdfstring{Proof of Sandwich lemma (\protect\lemref{sandwich})}{Proof of Sandwich lemma (Lemma 2.2)}}

Notice that for all $t\in\R$,
\[
\mathbf{P} (Z_-\geq t )\leq\mathbf{P} (Z\geq t )\leq\mathbf{P} (Z_+\geq t ).
\]
By convexity, this implies
\begin{eqnarray*}
\bigl|\mathbf{P} (Z\geq t )-\mathbf{P} (W\geq t )\bigr|&\leq&\bigl|\mathbf{P} (Z_-\geq t )-
\mathbf{P} (W\geq t )\bigr|
\\
&&{}+ \bigl|\mathbf{P} (Z_+\geq t )-\mathbf{P} (W\geq t )\bigr|.
\end{eqnarray*}
Integrate both sides to obtain the result.

\subsection{\texorpdfstring{Proof of conditional lemma (\protect\lemref{conditionalwasserstein})}{Proof of conditional lemma (Lemma 2.3)}}

First notice that the sigma field $\sigma(W_1)$ generated by $W_1$ is
contained in $\sG$. This implies that for all $t\in\R$,
\begin{eqnarray*}
&& \mathbf{E} \bigl[\bigl|\mathbf{P} (W_2
\geq t\mid \sG )- \mathbf{P} (Z_2\geq t )\bigr| \bigr]
\\
&&\qquad= \mathbf{E} \bigl[\mathbf{E} \bigl[\bigl|\mathbf{P} (W_2\geq t\mid \sG
)-\mathbf{P} (Z_2\geq t ) \bigr|| \sigma (W_1) \bigr]\bigr]
\\
&&\qquad\geq \mathbf{E} \bigl[\bigl|\mathbf{P} \bigl(W_2\geq t\mid\sigma
(W_1) \bigr)- \mathbf{P} (Z_2\geq t )\bigr| \bigr].
\end{eqnarray*}
Integrating both sides in $t$ and applying Fubini--Tonelli gives
\[
\mathbf{E} \bigl[d_W\bigl(\operatorname{Law} (W_2\mid
\sG ),\operatorname{Law} (Z_2 )\bigr) \bigr]\geq \mathbf{E}
\bigl[d_W \bigl(\operatorname{Law} \bigl(W_2\mid\sigma
(W_1) \bigr), \operatorname{Law} (Z_2 ) \bigr) \bigr].
\]

Therefore it suffices to prove the theorem in the case $\sG=\sigma
(W_1)$. For simplicity, we will assume that $(Z_1,Z_2,W_1,W_2)$ are all
defined in the same probability space, with $(Z_1,Z_2)$ independent
from $(W_1,W_2)$.
Let $f\dvtx \R\to\R$ be $1$-Lipschitz. We have
\[
\mathbf{E} \bigl[f(W_1+W_2)\mid W_1=w_1
\bigr] = \int f(w_1+w_2) \mathbf{P} (W_2\in
dw_2 \mid W_1=w_1 ).
\]
By the duality version of $d_W$, we have
\begin{eqnarray*}
&&\int f(w_1+w_2) \mathbf{P} (W_2\in
dw_2 \mid W_1=w_1 )
\\
&&\qquad\leq\int f(w_1+z_2) \mathbf{P} (Z_2\in
dz_2 ) + d_W\bigl(\operatorname{Law} (W_2
\mid W_1=w_1 ),\operatorname {Law} (Z_2 )
\bigr).
\end{eqnarray*}
Integrating over $W_1=w_1$ and using the fact that $Z_2$ is independent
from $W_1$, we obtain
\[
\mathbf{E} \bigl[f(W_1+W_2) \bigr]\leq\mathbf{E}
\bigl[f(W_1+Z_2) \bigr] + d_W\bigl(
\operatorname{Law} (W_2\mid W_1 ), \operatorname{Law}
(Z_2 )\bigr).
\]
But we also have
\[
\mathbf{E} \bigl[f(W_1+Z_2)\mid Z_2=z_2
\bigr] = \mathbf{E} \bigl[f(W_1+z_2) \bigr]\leq\mathbf{E}
\bigl[f(Z_1+z_2) \bigr] + d_W(W_1,Z_1),
\]
and the independence of $Z_1,Z_2$ implies
\[
\mathbf{E} \bigl[f(W_1+Z_2) \bigr] \leq\mathbf{E}
\bigl[f(Z_1+Z_2) \bigr] + d_W(W_1,Z_1).
\]
We conclude
\begin{eqnarray*}
\mathbf{E} \bigl[f(W_1+W_2) \bigr] &\leq&\mathbf{E}
\bigl[f(Z_1+Z_2) \bigr] + d_W(W_1,Z_1)
\\
& & {} + d_W\bigl(\operatorname{Law} (W_2\mid
W_1 ), \operatorname{Law} (Z_2 )\bigr).
\end{eqnarray*}
Since $f$ is an arbitrary $1$-Lipschitz function, we are done.
\end{appendix}
\section*{Acknowledgment}

We warmly thank the anonymous referee for pointing out several typos in
a previous versions of this paper.

%
%

%



\printaddresses


\begin{thebibliography}{17}

\bibitem{AldousTalk}
%
\begin{bmisc}[auto:STB|2013/04/24|11:25:54]
\bauthor{\bsnm{Aldous},~\bfnm{David}\binits{D.}}
(\byear{2010}).
\bhowpublished{Mixing times and hitting times.
Available at \url{http://www.stat.berkeley.edu/\textasciitilde aldous/Talks/slides.html}.}
\bptok{imsref}%
\end{bmisc}
%
\endbibitem

\bibitem{AldousFill_RWBook}
%
\begin{bmisc}[auto:STB|2013/04/24|11:25:54]
\bauthor{\bsnm{Aldous},~\bfnm{David}\binits{D.}} \AND
\bauthor{\bsnm{Fill},~\bfnm{James~Allen}\binits{J.~A.}}
(\byear{2001}).
\bhowpublished{Reversible Markov {chains} and random {walks} on
graphs. Available at \url{http://www.stat.berkeley.edu/\textasciitilde aldous/RWG/book.html}.}
\bptok{imsref}%
\end{bmisc}
%
\endbibitem

\bibitem{Aldous_ExpHittingTimes}
%
\begin{barticle}[mr]
\bauthor{\bsnm{Aldous},~\bfnm{David~J.}\binits{D.~J.}}
(\byear{1982}).
\btitle{Markov chains with almost exponential hitting times}.
\bjournal{Stochastic Process. Appl.}
\bvolume{13}
\bpages{305--310}.
\bid{doi={10.1016/0304-4149(82)90016-3}, issn={0304-4149}, mr={0671039}}
\bptok{imsref}%
\end{barticle}
%
\endbibitem

\bibitem{AldousBrown_RareEvents}
%
\begin{bincollection}[mr]
\bauthor{\bsnm{Aldous},~\bfnm{David~J.}\binits{D.~J.}} \AND
\bauthor{\bsnm{Brown},~\bfnm{Mark}\binits{M.}}
(\byear{1992}).
\btitle{Inequalities for rare events in time-reversible {M}arkov
chains. {I}}.
In \bbooktitle{Stochastic Inequalities ({S}eattle, {WA}, 1991)}.
\bseries{Institute of Mathematical Statistics Lecture
Notes---Monograph Series}
\bvolume{22}
\bpages{1--16}.
\bpublisher{IMS}, \blocation{Hayward, CA}.
\bid{doi={10.1214/lnms/1215461937}, mr={1228050}}
\bptok{imsref}%
\end{bincollection}
%
\endbibitem

\bibitem{BenjaminiMossel_RWPercolation}
%
\begin{barticle}[mr]
\bauthor{\bsnm{Benjamini},~\bfnm{Itai}\binits{I.}} \AND
\bauthor{\bsnm{Mossel},~\bfnm{Elchanan}\binits{E.}}
(\byear{2003}).
\btitle{On the mixing time of a simple random walk on the super critical
percolation cluster}.
\bjournal{Probab. Theory Related Fields}
\bvolume{125}
\bpages{408--420}.
\bid{doi={10.1007/s00440-002-0246-y}, issn={0178-8051}, mr={1967022}}
\bptok{imsref}%
\end{barticle}
%
\endbibitem

\bibitem{CooperEtAl_IPSOnExpanders}
%
\begin{barticle}[mr]
\bauthor{\bsnm{Cooper},~\bfnm{Colin}\binits{C.}},
\bauthor{\bsnm{Frieze},~\bfnm{Alan}\binits{A.}} \AND
\bauthor{\bsnm{Radzik},~\bfnm{Tomasz}\binits{T.}}
(\byear{2009}).
\btitle{Multiple random walks in random regular graphs}.
\bjournal{SIAM J. Discrete Math.}
\bvolume{23}
\bpages{1738--1761}.
\bid{doi={10.1137/080729542}, issn={0895-4801}, mr={2570201}}
\bptok{imsref}%
\end{barticle}
%
\endbibitem

\bibitem{Cox_Coalescing}
%
\begin{barticle}[mr]
\bauthor{\bsnm{Cox},~\bfnm{J.~T.}\binits{J.~T.}}
(\byear{1989}).
\btitle{Coalescing random walks and voter model consensus times on the
torus in~{$\mathbb{Z}^d$}}.
\bjournal{Ann. Probab.}
\bvolume{17}
\bpages{1333--1366}.
\bid{issn={0091-1798}, mr={1048930}}
\bptok{imsref}%
\end{barticle}
%
\endbibitem

\bibitem{Durrett_RGDynamics}
%
\begin{bbook}[mr]
\bauthor{\bsnm{Durrett},~\bfnm{Rick}\binits{R.}}
(\byear{2007}).
\btitle{Random Graph Dynamics}.
\bpublisher{Cambridge Univ. Press}, \blocation{Cambridge}.
\bid{mr={2271734}}
\bptnote{check year}%
\bptok{imsref}%
\end{bbook}
%
\endbibitem

\bibitem{Durrett_PNAS}
%
\begin{barticle}[auto:STB|2013/04/24|11:25:54]
\bauthor{\bsnm{Durrett},~\bfnm{Rick}\binits{R.}}
(\byear{2010}).
\btitle{Some features of the spread of epidemics and information on a random
graph}.
\bjournal{Proc. Natl. Acad. Sci. USA}
\bvolume{107}
\bpages{4491--4498}.
\bptok{imsref}%
\end{barticle}
%
\endbibitem

\bibitem{FountoulakisReed_RWGiantComponent}
%
\begin{barticle}[mr]
\bauthor{\bsnm{Fountoulakis},~\bfnm{N.}\binits{N.}} \AND
\bauthor{\bsnm{Reed},~\bfnm{B.~A.}\binits{B.~A.}}
(\byear{2008}).
\btitle{The evolution of the mixing rate of a simple random walk on
the giant
component of a random graph}.
\bjournal{Random Structures Algorithms}
\bvolume{33}
\bpages{68--86}.
\bid{doi={10.1002/rsa.20210}, issn={1042-9832}, mr={2428978}}
\bptok{imsref}%
\end{barticle}
%
\endbibitem

\bibitem{LeskovecEtAl_Community}
%
\begin{barticle}[mr]
\bauthor{\bsnm{Leskovec},~\bfnm{Jure}\binits{J.}},
\bauthor{\bsnm{Lang},~\bfnm{Kevin~J.}\binits{K.~J.}},
\bauthor{\bsnm{Dasgupta},~\bfnm{Anirban}\binits{A.}} \AND
\bauthor{\bsnm{Mahoney},~\bfnm{Michael~W.}\binits{M.~W.}}
(\byear{2009}).
\btitle{Community structure in large networks: {N}atural cluster sizes
and the
absence of large well-defined clusters}.
\bjournal{Internet Math.}
\bvolume{6}
\bpages{29--123}.
\bid{issn={1542-7951}, mr={2736090}}
\bptok{imsref}%
\end{barticle}
%
\endbibitem

\bibitem{LevinPeresWilmer_MCBook}
%
\begin{bbook}[mr]
\bauthor{\bsnm{Levin},~\bfnm{David~A.}\binits{D.~A.}},
\bauthor{\bsnm{Peres},~\bfnm{Yuval}\binits{Y.}} \AND
\bauthor{\bsnm{Wilmer},~\bfnm{Elizabeth~L.}\binits{E.~L.}}
(\byear{2009}).
\btitle{Markov Chains and Mixing Times}.
\bpublisher{Amer. Math. Soc.}, \blocation{Providence, RI}.
\bid{mr={2466937}}
\bptnote{check year}%
\bptok{imsref}%
\end{bbook}
%
\endbibitem

\bibitem{Liggett_IPSBook}
%
\begin{bbook}[mr]
\bauthor{\bsnm{Liggett},~\bfnm{Thomas~M.}\binits{T.~M.}}
(\byear{1985}).
\btitle{Interacting Particle Systems}.
\bseries{Grundlehren der Mathematischen Wissenschaften}
\bvolume{276}.
\bpublisher{Springer}, \blocation{New York}.
\bid{mr={0776231}}
\bptok{imsref}%
\end{bbook}
%
\endbibitem

\bibitem{Oliveira_TAMS}
%
\begin{barticle}[mr]
\bauthor{\bsnm{Oliveira},~\bfnm{Roberto~Imbuzeiro}\binits{R.~I.}}
(\byear{2012}).
\btitle{On the coalescence time of reversible random walks}.
\bjournal{Trans. Amer. Math. Soc.}
\bvolume{364}
\bpages{2109--2128}.
\bid{doi={10.1090/S0002-9947-2011-05523-6}, issn={0002-9947}, mr={2869200}}
\bptnote{check year}%
\bptok{imsref}%
\end{barticle}
%
\endbibitem

\bibitem{Pete_ConnectivityMixingPercolation}
%
\begin{barticle}[mr]
\bauthor{\bsnm{Pete},~\bfnm{G{\'a}bor}\binits{G.}}
(\byear{2008}).
\btitle{A note on percolation on $\mathbb{Z}^{d}$: Isoperimetric
profile via
exponential cluster repulsion}.
\bjournal{Electron. Commun. Probab.}
\bvolume{13}
\bpages{377--392}.
\bid{doi={10.1214/ECP.v13-1390}, issn={1083-589X}, mr={2415145}}
\bptok{imsref}%
\end{barticle}
%
\endbibitem

\bibitem{Prata_Tese}
%
\begin{bmisc}[auto:STB|2013/04/24|11:25:54]
\bauthor{\bsnm{Prata},~\bfnm{Alan}\binits{A.}}
(\byear{2012}).
\bhowpublished{Stochastic processes over finite graphs. {Ph.D.}
thesis in
Mathematics. IMPA, Rio de Janeiro, Brazil}.
\bptok{imsref}%
\end{bmisc}
%
\endbibitem

\end{thebibliography}
\end{document}